\DeclareSymbolFont{cyrletters}{OT2}{wncyr}{m}{n}
\DeclareMathSymbol{\Zhe}{\mathalpha}{cyrletters}{"11} 
\begin{document}
\title{Fine Selmer groups and ideal class groups}
\author{Sören Kleine} 
\address{Institut für Theoretische Informatik, Mathematik und Operations Research, Universität der Bundeswehr München, Werner-Heisenberg-Weg 39, 85577 Neubiberg, Germany} 
\email{soeren.kleine@unibw.de} 
\author{Katharina Müller} 
\address{Department of Mathematics and Statistics, Université Laval,  Québec City, Canada} 
\email{katharina.mueller.1@ulaval.ca} 

\subjclass[2020]{11R23} 
\keywords{uniform $p$-adic Lie extension, fine Selmer group, abelian variety with complex multiplication, $\mu$-invariant, generalised Iwasawa invariants, weak Leopoldt conjecture}

\newcommand{\R}{\mathds{R}}
 	\newcommand{\Z}{\mathds{Z}}
 	\newcommand{\N}{\mathds{N}}
 	\newcommand{\Q}{\mathds{Q}}
 	\newcommand{\K}{\mathds{K}}
 	\newcommand{\M}{\mathds{M}} 
 	\newcommand{\C}{\mathds{C}}
 	\newcommand{\B}{\mathds{B}}
 	\newcommand{\LL}{\mathds{L}}
 	\newcommand{\F}{\mathds{F}}
 	\newcommand{\p}{\mathfrak{p}} 
 	\newcommand{\q}{\mathfrak{q}} 
 	\newcommand{\f}{\mathfrak{f}} 
 	\newcommand{\Pot}{\mathcal{P}}
 	\newcommand{\Gal}{\textup{Gal}}
 	\newcommand{\rg}{\textup{rank}}
 	\newcommand{\id}{\textup{id}}
 	\newcommand{\Ker}{\textup{Ker}}
 	\newcommand{\Image}{\textup{Im}} 
 	\newcommand{\pr}{\textup{pr}}
 	\newcommand{\la}{\langle}
 	\newcommand{\ra}{\rangle}
 	\newcommand{\gdw}{\Leftrightarrow}
 	\newcommand{\pfrac}[2]{\genfrac{(}{)}{}{}{#1}{#2}}
 	\newcommand{\Ok}{\mathcal{O}}
 	\newcommand{\Norm}{\mathrm{N}} 
 	\newcommand{\coker}{\mathrm{coker}}
 	\newcommand{\dotcup}{\stackrel{\textstyle .}{\bigcup}}
 	\newcommand{\Cl}{\mathrm{Cl}}
 	\newcommand{\Sel}{\textup{Sel}} 

 	\newtheorem{lemma}{Lemma}[section] 
 	\newtheorem{prop}[lemma]{Proposition} 
 	\newtheorem{defprop}[lemma]{Definition and Proposition} 
 	\newtheorem{conjecture}[lemma]{Conjecture} 
 	\newtheorem{thm}[lemma]{Theorem} 
 	\newtheorem*{thm*}{Theorem} 
 	\newtheorem{cor}[lemma]{Corollary}
 	\newtheorem{claim}[lemma]{Claim}

 	\theoremstyle{definition}
 	\newtheorem{def1}[lemma]{Definition} 
 	\newtheorem{ass}[lemma]{Assumption}
 	\newtheorem{rem}[lemma]{Remark} 
 	\newtheorem{rems}[lemma]{Remarks} 
 	\newtheorem{example}[lemma]{Example} 
 	\newtheorem{fact}[lemma]{Fact}

\maketitle

\begin{abstract} 
   Let $K$ be a number field, let $A$ be an abelian variety defined over $K$, and let $K_\infty/K$ be a uniform $p$-adic Lie extension. We compare several arithmetic invariants of Iwasawa modules of ideal class groups on the one side and fine Selmer groups of abelian varieties on the other side. 
   
   If $K_\infty$ contains sufficiently many $p$-power torsion points of $A$, then we can compare the ranks and the Iwasawa $\mu$-invariants of these modules over the Iwasawa algebra. 
   In several special cases (e.g. multiple $\Z_p$-extensions), we can also prove relations between suitably generalised Iwasawa $\lambda$-invariants of the two types of Iwasawa modules. 
   
   In the literature, different kinds of generalised Iwasawa $\lambda$-invariants have been introduced for ideal class groups and Selmer groups. We define analogues of both concepts for fine Selmer groups and compare the resulting invariants. 
   
   In order to obtain some of our main results, we prove new asymptotic formulas for the growth of ideal class groups and fine Selmer groups in multiple $\Z_p$-extensions. 
\end{abstract} 

\section{Introduction} \label{section:1} 

The origins of Iwasawa theory concerned the investigation of the growth of class numbers in $\Z_p$-extensions of number fields. Iwasawa proved the existence of an asymptotic formula 
\begin{align} \label{eq:iw-intro} 
  e_n = \mu(K_\infty/K) \cdot p^n + \lambda(K_\infty/K) \cdot n + \nu(K_\infty/K) 
\end{align} 
for the $p$-valuations of the class numbers of the intermediate fields $K_n$ contained in a $\Z_p$-extension $K_\infty/K$. Therefore the growth of these class numbers is described in terms of the \emph{Iwasawa invariants} $\mu(K_\infty/K)$, $\lambda(K_\infty/K)$ and $\nu(K_\infty/K)$. 

More recent Iwasawa theory has generalised on Iwasawa's approach in several directions. On the one hand, one studies more general and \emph{non-necessarily abelian} normal extensions of number fields, the Galois groups of which are nowadays usually assumed to be compact $p$-adic Lie groups. On the other hand, one studies the growth of \emph{Selmer groups} of abelian varieties or, even more generally, of certain Galois representations, over such extensions. 

Whereas the classical Iwasawa modules arising from the ideal class groups in a $\Z_p$-extension $K_\infty/K$ are always torsion modules over the corresponding Iwasawa algebra $\Z_p\llbracket \Gal(K_\infty/K)\rrbracket $ (see also \cite{iwasawa}), it turned out that the Selmer groups often yield something non-(co-)torsion (for example, this occurs naturally in the case of supersingular reduction at $p$). Therefore Greenberg, Coates--Sujatha, and others initiated the investigation of \emph{fine Selmer groups} which are canonical subgroups of the Selmer groups. The \emph{weak Leopoldt conjecture} predicts the corresponding Iwasawa modules (obtained by taking Pontryagin duals) to be torsion over the Iwasawa algebra. It is widely believed that this conjecture holds true in a very general setting. We will, as a side-product of our main results, prove that the conjecture holds under very natural hypotheses (see Corollary~\ref{cor:weak-Leo} and Theorem~\ref{thm:weak-Leo_dense}). 

Therefore both fine Selmer groups and ideal class groups (should) yield torsion Iwasawa modules. In their seminal paper \cite{coates-sujatha}, Coates and Sujatha showed that the analogy between these two objects goes much farther. For example, it has been conjectured since Iwasawa that the $\mu$-invariant of the \emph{cyclotomic} $\Z_p$-extension is zero for any number field $K$. Analogously, Coates and Sujatha conjectured that the $\mu$-invariant of the fine Selmer group of any elliptic curve over the cyclotomic $\Z_p$-extension should always vanish (cf. \cite[Conjecture~A]{coates-sujatha}). On the contrary, it is known that the $\mu$-invariant of the \emph{Selmer group} can be non-trivial even for the cyclotomic $\Z_p$-extensions of number fields (see \cite[Section~10]{mazur}). 

In this context, Lim and Murty have proved that the $\mu$-invariant of the cyclotomic $\Z_p$-extension $K_\infty^c$ of a number field $K$ vanishes if and only if for any abelian variety $A$ defined over $K$ such that $K$ contains the group $A[p]$ of $p$-torsion points on $A$, the $\mu$-invariant of the fine Selmer groups of $A$ over $K_\infty^c/K$ vanishes (see \cite[Theorem~5.5]{lim-murty}). One of the aims of this article is to generalise this statement to more general, so-called \emph{uniform} $p$-extensions (for the definitions we refer to Section~\ref{section:notation_extensions}). 

The main technical result (see Theorem~\ref{newrankthm}) will enable us, roughly speaking, to compare the $p^k$-torsion subgroups, $k \in \N$, of the fine Selmer group of an abelian variety $A$ defined over a number field $L$ on the one hand with the $p^k$-torsion subgroup of the ideal class group of $L$ on the other hand. This generalises the approach of Lim and Murty and works under the hypothesis that $L$ contains $A[p^k]$. 

Now let $K_\infty/K$ be a normal uniform $p$-extension (see Section~\ref{section:notation_extensions} for more details), and let $A$ be an abelian variety defined over $K$. Let $Y(K_\infty)$ denote the projective limit of the ideal class groups of the intermediate fields $K_n \subseteq K_\infty$ fixed by $\Gal(K_\infty/K_n)^{p^n}$, and let $Y_A^{(K_\infty)}$ be the projective limit of the Pontryagin duals of the fine Selmer groups of $A$ over the $K_n$ (for the definitions, we refer to Section~\ref{section:notation_fine_Selmer_Groups}). We want to compare the structural invariants of the  Iwasawa modules $Y(K_\infty)$ and $Y_{A}^{(K_\infty)}$. By \emph{structural invariants}, we mean the rank over the Iwasawa algebra $\Z_p\llbracket \Gal(K_\infty/K)\rrbracket $, the $\mu$-invariant and also finer invariants like $\Z_p\llbracket H\rrbracket $-ranks, ${H \subseteq G}$ (see Theorem~\ref{thm:B} below), and, in the special case of multiple $\Z_p$-extensions, the generalised $\lambda$-invariant $l_0$ (see Theorems~\ref{thm:C} and \ref{thm:D}). 

It turns out that the ranks and the $\mu$-invariants can be compared if $K_\infty$ contains $A[p^k]$ for a sufficiently large $k$. More precisely, we have the following result, for which we introduce some more notation. Let $G = \Gal(K_\infty/K)$. For any finitely generated $\Z_p\llbracket G\rrbracket $-module $X$, we denote by $r_{\Z_p\llbracket G\rrbracket }(X)$ the rank as a $\Z_p\llbracket G\rrbracket $-module, and we let 
$$ f_i(X) = \rg_{\F_p\llbracket G\rrbracket }(p^i X[p^\infty] / p^{i+1}X[p^\infty]), $$ 
$i \in \N$, where ${X[p^\infty] \subseteq X}$ denotes the subgroup of $p$-power torsion elements. Then $f_i(X) = 0$ for each $i \gg 0$, and the $\mu$-invariant of $X$ is defined as ${\mu(X) = \sum_{i \ge 0} f_i(X)}$ (for details, see Section~\ref{section:2}). 

\begin{thm} \label{thm:A} 
  Let $A$ be an abelian variety defined over the number field $K$, and let $K_\infty/K$ be a normal uniform pro-$p$ extension with Galois group $G$. We choose a finite set $\Sigma$ of primes of $K$ which consists of the primes above $p$, the primes where $A$ has bad reduction and the primes of $K$ which ramify in $K_\infty$. If $p = 2$, then we assume that $K$ is totally imaginary. 
  
  Suppose that no prime $v \in \Sigma$ splits completely in $K_\infty/K$, and that $A[p^k] \subseteq K_\infty$. 
  
  Then 
 $$v \cdot r_{\Z_p\llbracket G \rrbracket }(Y_{A}^{(K_\infty)}) + \sum_{i=0}^{v-1}f_i(Y_{A}^{(K_\infty)}) =  \sum_{i=0}^{v-1}f_i(Y(K_\infty))$$ 
 for every $v\le k$. 
\end{thm}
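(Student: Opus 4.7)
The plan is to realise both sides of the asserted equality as the $\F_p[[G]]$-rank of $X/p^v X$, once for $X = Y_{A,\Sigma}^{(K_\infty)}$ and once for $X = Y(K_\infty)$, and then to compare these two ranks directly using the main technical input Theorem~\ref{newrankthm}.

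The first step is a purely module-theoretic identity. For any finitely generated $\Z_p[[G]]$-module $X$, the filtration
\[ X/p^v X \supseteq p X/p^v X \supseteq \cdots \supseteq p^{v-1} X/p^v X \supseteq 0 \]
has successive quotients $p^i X/p^{i+1} X$, which are $\F_p[[G]]$-modules, so by additivity of $\F_p[[G]]$-rank in short exact sequences,
\[ \rg_{\F_p[[G]]}(X/p^v X) \;=\; \sum_{i=0}^{v-1} \rg_{\F_p[[G]]}(p^i X/p^{i+1} X). \]
Decomposing $X$ (up to pseudo-null) into its free, $p^\infty$-torsion and prime-to-$p$-torsion parts, each $p^i X/p^{i+1} X$ picks up a contribution of $r(X)$ from the free summand and $f_i(X)$ from the $p^\infty$-torsion summand, while the remaining torsion is annihilated by an element of $\Z_p[[G]]$ coprime to $p$ and so contributes rank $0$. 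Hence
\[ \rg_{\F_p[[G]]}(X/p^v X) \;=\; v \cdot r(X) + \sum_{i=0}^{v-1} f_i(X). \]
Applied to $X = Y_{A,\Sigma}^{(K_\infty)}$ this is the left-hand side of the theorem; applied to $X = Y(K_\infty)$, which is known to be $\Z_p[[G]]$-torsion so that $r(Y(K_\infty)) = 0$, this is the right-hand side.

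The second step is the arithmetic equality
\[ \rg_{\F_p[[G]]}\bigl(Y_{A,\Sigma}^{(K_\infty)}/p^v\bigr) \;=\; \rg_{\F_p[[G]]}\bigl(Y(K_\infty)/p^v\bigr) \qquad (v \le k). \]
This is where Theorem~\ref{newrankthm} enters. Applied at each finite layer $K_n \subseteq K_\infty$ fixed by $G^{p^n}$---each of which contains $A[p^v]$ once $n$ is large enough, since $v \le k$ and $A[p^k] \subseteq K_\infty$---this theorem relates the $p^v$-torsion of the fine $\Sigma$-Selmer group of $A/K_n$ to that of $\Cl(K_n)$. Taking Pontryagin duals and passing to the projective limit should produce a comparison between $Y_{A,\Sigma}^{(K_\infty)}/p^v$ and $Y(K_\infty)/p^v$ up to local defect terms supported at primes of $\Sigma$. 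The hypothesis that no prime in $\Sigma$ splits completely in $K_\infty/K$ is precisely what forces these local terms to be pseudo-null as $\F_p[[G]]$-modules; the requirement that $K$ be totally imaginary when $p=2$ kills the Archimedean contributions in the same manner. Since pseudo-null modules have $\F_p[[G]]$-rank zero, the two ranks coincide.

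The main obstacle will be the careful bookkeeping in step two. One must verify that the finite-level comparison provided by Theorem~\ref{newrankthm} is sufficiently functorial to pass to the projective limit, that the non-torsion part of $Y_{A,\Sigma}^{(K_\infty)}$ is tracked correctly under reduction modulo $p^v$---this is precisely the phenomenon responsible for the extra term $v\cdot r(Y_{A,\Sigma}^{(K_\infty)})$ appearing only on the left---and that each local defect at a non-completely-split prime of $\Sigma$ is genuinely pseudo-null as an $\F_p[[G]]$-module. Granted these verifications, combining steps one and two yields the claimed identity.
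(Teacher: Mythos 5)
Your Step 1 identity is the right way to read the two sides of the statement, and it is implicitly what the paper's Theorem~\ref{thm:perbet+} encodes: there $\mu^{(k)}(M)=\sum_i\min(k,e_i)=\sum_{j=0}^{k-1}f_j(M)$, so the coefficient of $p^{ln}$ in the growth of $|M_{G_n}/p^kM_{G_n}|$ is exactly the filtered rank $k\cdot r(M)+\sum_{j<k}f_j(M)$ that you describe. (Note, though, that $X/p^vX$ is not itself an $\F_p[[G]]$-module, and the identity $\rg_{\F_p[[G]]}(p^iX/p^{i+1}X)=r(X)+f_i(X)$ needs the short exact sequence $0\to p^iX[p^\infty]/p^{i+1}X[p^\infty]\to p^iX/p^{i+1}X\to X'/pX'\to 0$ together with $\rg_{\F_p[[G]]}(X'/pX')=r(X)$ for $X'=X/X[p^\infty]$; this last equality is not purely formal over a non-commutative $\Z_p[[G]]$, it is in effect supplied by the structure theory behind Theorem~\ref{thm:perbet+}.) There is also a factor-of-$2d$ error inherited from a slip in the Introduction's Theorem~\ref{thm:A}: the version proved in the paper, Theorem~\ref{thm:comparing ranks}, reads $k\cdot r_{A,\Sigma}^{(K_\infty)}+\sum_{i<k}f_i^{A,\Sigma}=2d\sum_{i<k}f_i$, consistent with the bound $|r_p^k(\Sel_{0,A,\Sigma}(L))-2d\,r_p^k(\Cl(L))|\le 2dk(1+2|\Sigma(L)|)$ of Theorem~\ref{newrankthm}. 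Your Step~2 should therefore target $\rg(Y_{A,\Sigma}^{(K_\infty)}/p^v)=2d\cdot\rg(Y(K_\infty)/p^v)$, not equality.

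The more serious gap is the passage in Step~2 from Theorem~\ref{newrankthm}, which is a cardinality estimate at finite levels, to a module-level comparison of $Y_{A,\Sigma}^{(K_\infty)}/p^v$ with $Y(K_\infty)/p^v$ "up to pseudo-null defects". Theorem~\ref{newrankthm} does not hand you a map of projective systems, and even where the underlying maps exist ($\Sel_{0,A[p^k],\Sigma}(K_n)\cong\textup{Hom}(\Cl_\Sigma(K_n),A[p^k])$, then back to $\Sel_{0,A,\Sigma}(K_n)[p^k]$ and to $\Cl(K_n)$), the comparisons are carried out through several lemmas whose kernels/cokernels are bounded only in size, not identified as modules. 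Knowing $|\textup{defect}|=O(p^{(l-1)n})$ at level $n$ does not by itself say the limiting $\F_p[[G]]$-module is torsion; that inference is precisely what Perbet's asymptotic machinery formalises. You also omit the two control theorems that the argument cannot do without: the comparison of $\Sel_{0,A,\Sigma}(K_n)$ with $\Sel_{0,A,\Sigma}(K_\infty)^{G_n}$ (Lemma~\ref{lemma:katharinas_control_thm}) and the analogous comparison of $Y(K_n)$ with $Y(K_\infty)_{G_n}$, both of which are needed before any statement about $Y_{A,\Sigma}^{(K_\infty)}$ or $Y(K_\infty)$ can be extracted from finite levels. The paper avoids the module-level route entirely: it estimates $r_p^k(\Sel)$ and $r_p^k(\Cl)$ numerically at each layer, feeds them through the control theorems, applies Theorem~\ref{thm:perbet+} separately to $Y_{A,\Sigma}^{(K_\infty)}$ and $Y(K_\infty)$, and reads the identity off the leading coefficients of $p^{ln}$. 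Your framework captures the meaning of the resulting identity, but as written it does not yet constitute a proof of Step~2, and the omitted control theorems and the $2d$ factor need to be restored.
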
 
In particular, if $k$ is larger than the maximal $i$ such that either $f_i(Y_{A}^{(K_\infty)})$ or $f_i(Y(K_\infty))$ is non-zero, then we actually obtain 
$$ k \cdot r_{\Z_p\llbracket G \rrbracket }(Y_{A}^{(K_\infty)}) + \mu(Y_{A}^{(K_\infty)}) = \mu(Y(K_\infty)). $$ 
Here we note that $r_{\Z_p\llbracket G \rrbracket }(Y(K_\infty)) = 0$ by the hypotheses from the theorem. 

We also prove a variant of Theorem~\ref{thm:A} which proves only an inequality instead of the above equality, but holds  under the weaker assumption that ${p^k \cdot A[p^\infty] \ne \{0\}}$ (see Theorem~\ref{thm:muungleichung}). 

If $K_\infty$ contains the full $p$-torsion $A[p^\infty]$, then we can compare also finer structural invariants generalising the classical $\lambda$-invariants.
\begin{thm} \label{thm:B} 
   Let $K_\infty/K$ and $\Sigma$ be as above, and let ${H \subseteq G = \Gal(K_\infty/K)}$ be a uniform subgroup. We  suppose that $Y_{A}^{(K_\infty)}$ is finitely generated over $\Z_p\llbracket H\rrbracket $, and that ${A[p^\infty] \subseteq K_\infty^H}$. 
   
   If all primes of $\Sigma$ are finitely split in $K_\infty^H/K$, then ${\rg_{\Z_p\llbracket H\rrbracket }(Y(K_\infty))}$ is also finite and 
   \[ \rg_{\Z_p\llbracket H\rrbracket }(Y_{A}^{(K_\infty)}) = 2d \cdot \rg_{\Z_p\llbracket H\rrbracket }(Y(K_\infty)). \] 
   Here $d$ denotes the dimension of the abelian variety $A$. 
\end{thm}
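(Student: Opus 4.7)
The plan is to exploit the hypothesis $A[p^\infty]\subseteq K_\infty^H$, which trivialises the action of $H=\Gal(K_\infty/K_\infty^H)$ on $A[p^\infty]$; this reduces the fine Selmer group of $A$, as an $H$-module, to $2d$ copies of a fine Selmer group with constant coefficients $\Q_p/\Z_p$, and the latter is then related to the ideal class group by class field theory, with the discrepancy controlled by the finitely-split hypothesis on primes in $\Sigma$.

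First, since $A[p^\infty]\subseteq K_\infty^H$ and $A[p^\infty]$ is abstractly isomorphic to $(\Q_p/\Z_p)^{2d}$, there is an $H$-equivariant isomorphism $A[p^\infty]\cong(\Q_p/\Z_p)^{2d}$ with trivial $H$-action on the right. Substituting this into the defining Galois cohomology of the fine $\Sigma$-Selmer group of $A$ over $K_\infty$ (which only involves cohomology of subgroups of $G_{K_\infty}\subseteq G_{K_\infty^H}$ with coefficients in $A[p^\infty]$) and passing to Pontryagin duals, I would obtain an isomorphism of $\Z_p[[H]]$-modules
\[ Y_{A,\Sigma}^{(K_\infty)}\;\cong\;\bigl(Y_{\Q_p/\Z_p,\Sigma}^{(K_\infty)}\bigr)^{2d},\]
where $Y_{\Q_p/\Z_p,\Sigma}^{(K_\infty)}$ is the Pontryagin dual of the fine $\Sigma$-Selmer group of $K_\infty$ with constant coefficients $\Q_p/\Z_p$. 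In particular, $Y_{\Q_p/\Z_p,\Sigma}^{(K_\infty)}$ is finitely generated over $\Z_p[[H]]$ and $\rg_{\Z_p[[H]]}(Y_{A,\Sigma}^{(K_\infty)})=2d\cdot\rg_{\Z_p[[H]]}(Y_{\Q_p/\Z_p,\Sigma}^{(K_\infty)})$.

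Next, by class field theory, $Y_{\Q_p/\Z_p,\Sigma}^{(K_\infty)}\cong\Gal(M_\Sigma/K_\infty)$, where $M_\Sigma/K_\infty$ is the maximal abelian pro-$p$ extension that is unramified everywhere and in which every prime above $\Sigma$ splits completely. Since $M_\Sigma$ is contained in the maximal unramified abelian pro-$p$ extension $L_\infty$ of $K_\infty$, I would write down the exact sequence of $\Z_p[[G]]$-modules
\[ 0\longrightarrow T\longrightarrow Y(K_\infty)\longrightarrow Y_{\Q_p/\Z_p,\Sigma}^{(K_\infty)}\longrightarrow 0, \]
where $T=\Gal(L_\infty/M_\Sigma)$ is topologically generated by the Frobenius elements of the primes of $K_\infty$ above $\Sigma$. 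Using the finitely-split hypothesis on $\Sigma$ in $K_\infty^H/K$, I would argue that $T$ is finitely generated over $\Z_p[[H]]$ and of $\Z_p[[H]]$-rank zero: for each $v\in\Sigma$ and each prime $w$ of $K_\infty$ above $v$ the decomposition subgroup $D_w\subseteq G$ surjects onto an open subgroup of $G/H\cong\Z_p$, so the $\Z_p[[G]]$-submodule generated by $\mathrm{Frob}_w$ is a quotient of $\mathrm{Ind}_{D_w}^G\Z_p$; a standard analysis of such induced modules over Iwasawa algebras of uniform $p$-adic Lie groups, combined with the one-dimensional direction from $G/H$, forces the restriction to $\Z_p[[H]]$ to be a torsion module. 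Summing over the finitely many $v\in\Sigma$ yields $\rg_{\Z_p[[H]]}(T)=0$. Combining this with the first paragraph gives the desired identity.

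The main obstacle I foresee is the third paragraph: while the finitely-split hypothesis gives positive $p$-adic Lie dimension for the image of each decomposition subgroup in $G/H$, it does not directly control the intersection $D_w\cap H$, so producing the precise rank-zero conclusion for $T$ requires a careful tracking of Lie dimensions through the chain $D_w\cap H\subseteq D_w\subseteq G$ and a nontrivial use of the codimension-one nature of $H$ inside $G$. A clean treatment may route the argument through the local Iwasawa modules $\Z_p[[G/D_w]]$, or alternatively sidestep class field theory entirely by iterating the technical result behind Theorem~\ref{thm:A} along the layers of $K_\infty^H/K$ and taking a limit.
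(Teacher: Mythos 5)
Your first two paragraphs are correct and in fact cleaner than what the paper does: since $A[p^\infty]\subseteq K_\infty^H$, the $G_{K_\infty^H}$-module $A[p^\infty]$ is trivial, so the choice of a basis gives an $H$-equivariant identification $A[p^\infty]\cong(\Q_p/\Z_p)^{2d}$, and running this through the defining sequence of the fine Selmer group (global and local cohomology over $K_\infty$ are then $\textup{Hom}$-groups) indeed yields $Y_{A,\Sigma}^{(K_\infty)}\cong Y_\Sigma(K_\infty)^{2d}$ as $\Z_p[[H]]$-modules. Your exact sequence $0\to T\to Y(K_\infty)\to Y_\Sigma(K_\infty)\to 0$ is also correct; $T$ is generated by the Frobenius classes of $\Sigma(K_\infty)$.

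The third paragraph, however, has a genuine gap, and it is exactly the one you flagged. For $w\mid v$ with decomposition group $D=D_v(K_\infty/K)$, the $\Z_p[[G]]$-module $T_v$ is a quotient of $\Z_p[[G/D]]\cong \textup{Ind}_D^G\Z_p$. By Mackey, as a $\Z_p[[H]]$-module this is a sum of modules $\Z_p[[H/(H\cap gDg^{-1})]]$, and such a summand is $\Z_p[[H]]$-torsion only when $\dim(H\cap gDg^{-1})\ge 1$. The hypothesis that $v$ is finitely split in $K_\infty^H/K$ only says that $D$ surjects onto an open subgroup of $G/H\cong\Z_p$; it says nothing about $D\cap H$. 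If $\dim D=1$ and $D$ projects isomorphically onto an open subgroup of $G/H$, then $D\cap H=\{1\}$ and $\Z_p[[G/D]]\cong\Z_p[[H]]$ as a $\Z_p[[H]]$-module, which has rank $1$, not $0$. So "a standard analysis... forces the restriction to $\Z_p[[H]]$ to be a torsion module" is false as stated; what is needed is the additional input that $\dim(D_v\cap H)\ge 1$ for every $v\in\Sigma$ (equivalently $\dim D_v\ge 2$, since $D_v$ already surjects onto $G/H$), and this does not follow from the stated hypotheses. No amount of careful dimension-chasing through $D_w\cap H\subseteq D_w\subseteq G$ will repair this without that extra condition.

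The paper does not run the class-field-theory exact sequence at the infinite level. Instead it descends to the tower $F_n=K_\infty^{H_n}$, compares $\rg_{\Z_p}(Y_{A,\Sigma}^{(F_n)})$ with $\rg_{\Z_p}((Y_{A,\Sigma}^{(K_\infty)})_{H_n})$ via a control theorem modelled on Kundu--Lim, identifies $Y_{A,\Sigma}^{(F_n)}\cong Y_\Sigma(F_n)^{2d}$ at each layer, compares $Y_\Sigma(F_n)$ with $Y(F_n)$ and $Y(F_n)$ with $Y(K_\infty)_{H_n}$, and finally reads off the $\Z_p[[H]]$-ranks using Harris' asymptotic formula. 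This is essentially your suggested alternative of "iterating the technical result behind Theorem~\ref{thm:A} along the layers and taking a limit." Note that the paper's version of the problematic step is the bound $|\Sigma(F_n)|=\Ok(p^{(h-1)n})$, which is the layer-by-layer shadow of the same $\dim(D_v\cap H)\ge 1$ condition; so the two routes face the same difficulty. The infinite-level class-field-theory route you propose is conceptually cleaner if the decomposition groups are known to meet $H$ in positive dimension, but as written the proposal does not close the gap, whereas the paper's argument supplies (via the control theorem and Harris' formula) the quantitative machinery it is built on.
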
 
We briefly describe the arithmetic meaning of the $\Z_p\llbracket H\rrbracket $-ranks. Suppose that the abelian variety $A$ has good ordinary reduction at the primes of $K$ above $p$, let $K_\infty^c$ be the cyclotomic $\Z_p$-extension of $K$, and let $K_n^c \subseteq K_\infty^c$ be the subfield of degree $p^n$ over $K$, $n \in \N$. Let $X_A^{(K_\infty^c)}$ be the Pontryagin dual of the (ordinary) Selmer group, and suppose that $X_A^{(K_\infty^c)}$ is a torsion $\Z_p\llbracket \Gal(K_\infty^c/K)\rrbracket $-module. It has been shown by Mazur (see \cite[p.~185]{mazur}) that we obtain bounds for Mordell-Weil ranks: 
\[ \rg_{\Z}(A(K_n^c)) \le \lambda(X_A^{(K_\infty^c)}) \] 
for every $n \in \N$. In \cite{Hung-Lim}, the authors considered, more generally, uniform $p$-adic Lie extensions $K_\infty/K$ of dimension $l \ge 2$ which contain $K_\infty^c$. They proved that 
\[ \rg_{\Z}(A(K_n)) \le \rg_{\Z_p\llbracket H\rrbracket }(X_A^{(K_\infty)}/X_A^{(K_\infty)}[p^\infty]) \cdot p^{(l-1)n} + \Ok(p^{(l-2)n}), \] 
provided that $X_A^{(K_\infty)}/X_A^{(K_\infty)}[p^\infty]$ is finitely generated over $\Z_p\llbracket H\rrbracket $. In this context, we also would like to mention a recent related result by A.~Ray (see \cite[Theorem~2.6]{ray}).  

Therefore the $\Z_p\llbracket H\rrbracket $-rank of the Pontryagin dual of the \emph{Selmer group} $X_{A}^{(K_\infty)}$, or more precisely, the rank of the quotient $X_{A}^{(K_\infty)}/X_{A}^{(K_\infty)}[p^\infty]$, can (if it is finite) be seen as a higher-dimensional analogue of the classical Iwasawa $\lambda$-invariant (see also \cite[Section~6]{coates-howson}). In this article, we consider \emph{fine Selmer groups}. By a classical conjecture which goes back to Coates and Sujatha (see \cite[Conjecture~A]{coates-sujatha}), the $\mu$-invariant of the fine Selmer group over the cyclotomic $\Z_p$-extension $K_\infty^c$ of $K$ should be zero. Building on this conjecture, the $\mu$-invariants are expected to be zero for fine Selmer groups over uniform extensions $K_\infty$ which contain $K_\infty^c$, and one can thus study the $\Z_p\llbracket H\rrbracket $-rank of $Y_A^{(K_\infty)}$ itself instead of $Y_A^{(K_\infty)}/Y_A^{(K_\infty)}[p^\infty]$. It seems not known whether the $\lambda$-invariants (or, more generally, $\Z_p\llbracket H\rrbracket $-ranks) of the corresponding Pontryagin duals still serve as upper bounds for the Mordell-Weil ranks. On the other hand, whereas the finiteness of ${\rg_{\Z_p\llbracket H\rrbracket }(X_A^{(K_\infty)}/X_A^{(K_\infty)}[p^\infty])}$ is known only in very few cases (see \cite{MHG}), the finiteness of ${\rg_{\Z_p\llbracket H\rrbracket }(Y_A^{(K_\infty)})}$ for suitable ${H \subseteq G}$ holds under very natural assumptions (see Theorem~\ref{thm:Conj_A}).

For multiple $\Z_p$-extensions $K_\infty/K$ of number fields, one can define another natural generalised $\lambda$-invariant. Cuoco and Monsky have proved an asymptotic formula for the growth of the $p$-valuation $e_n$ of the class numbers of the intermediate number fields which generalises Iwasawa's formula~\eqref{eq:iw-intro}: if $K_\infty/K$ denotes a $\Z_p^l$-extension with intermediate fields $K_n$, $n \in \N$, then there exist integers $m_0(K_\infty/K), l_0(K_\infty/K)$ such that 
\[ e_n = m_0(K_\infty/K) p^{ln} + l_0(K_\infty/K) n p^{(l-1)n} + \Ok(p^{(l-1)n}). \] 
In particular, if $l = 1$, then ${m_0(K_\infty/K) = \mu(K_\infty/K)}$ and ${l_0(K_\infty/K) = \lambda(K_\infty/K)}$. In fact, these \emph{generalised Iwasawa invariants} ${m_0, l_0 \in \N}$ can be defined for any finitely generated $\Z_p\llbracket G\rrbracket $-module, provided that ${G \cong \Z_p^l}$ (see Section~\ref{section:notation_extensions}). This enables us to define also a $l_0$-invariant for fine Selmer groups. In our third comparison result (see Theorem~\ref{thm:l0_Gleichheit} below), we compare ${l_0(Y(K_\infty)) = l_0(K_\infty/K)}$ and $l_0(Y_{A}^{(K_\infty)})$ in the following setting. 
\begin{thm} \label{thm:C} 
   Let $A$ be an abelian variety of dimension $d$ defined over $K$, and let $K_\infty/K$ be a $\Z_p^l$-extension, $p \ge 3$, for some $l \ge 2$. We let $\Sigma_p$ denote the set of primes of $K$ above $p$, and we suppose that the decomposition subgroup ${D_v \subseteq \Gal(K_\infty/K)}$ has dimension at least 2 for each ${v \in \Sigma_p}$. 
   
   If $A[p^\infty] \subseteq K_\infty$, then 
   \[ l_0(Y_{A}^{(K_\infty)}) = 2d \cdot l_0(Y(K_\infty)). \]  
\end{thm}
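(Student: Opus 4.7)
The strategy is to combine Theorem~\ref{thm:A} with a pseudo-isomorphism analysis of the fine Selmer group.

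First, since $A[p^\infty] \subseteq K_\infty$, Theorem~\ref{thm:A} applies with arbitrarily large $k$; letting $v$ tend to infinity forces $r(Y_{A,\Sigma_p}^{(K_\infty)}) = 0$ and $\mu(Y_{A,\Sigma_p}^{(K_\infty)}) = \mu(Y(K_\infty))$. Both Iwasawa modules are therefore torsion over $\Z_p[[G]]$ with equal $\mu$-invariants, so, since $m_0 = \mu$ for torsion modules over $\Z_p[[\Z_p^l]]$, the leading terms $m_0 \cdot p^{ln}$ of the Cuoco--Monsky expansions already coincide. The theorem thus reduces to locating the factor $2d$ in the $l_0$-direction.

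Second, I would exploit the triviality of the action of $H := \Gal(K_{\Sigma_p}/K_\infty)$ on $A[p^\infty]$. This triviality identifies $H^1(H, A[p^\infty])$ with $\mathrm{Hom}(H^{\mathrm{ab},p}, A[p^\infty])$; taking Pontryagin duals and using $A[p^\infty] \cong (\Q_p/\Z_p)^{2d}$ then presents the resulting Iwasawa module as $H^{\mathrm{ab},p} \otimes_{\Z_p} T_p A$, i.e.\ as $2d$ ``copies'' of $X_{\Sigma_p}(K_\infty) := H^{\mathrm{ab},p}$ (the Galois group of the maximal abelian pro-$p$ extension of $K_\infty$ unramified outside $\Sigma_p$), twisted by the $G$-action on the Tate module $T_p A$. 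Class field theory relates $X_{\Sigma_p}(K_\infty)$ to $Y(K_\infty)$ modulo a module built from the decomposition subgroups above $p$, while the Poitou--Tate sequence defining the fine Selmer group relates $Y_{A,\Sigma_p}^{(K_\infty)}$ to the Pontryagin dual of $H^1(H, A[p^\infty])$ modulo analogous semi-local $\Sigma_p$-terms. Since $l_0$ is additive on short exact sequences modulo pseudo-null modules and invariant under pseudo-isomorphism, the desired equality $l_0(Y_{A,\Sigma_p}^{(K_\infty)}) = 2d \cdot l_0(Y(K_\infty))$ follows provided all the error modules are shown to be pseudo-null and the twist by $T_p A$ is shown not to alter $l_0$.

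These two remaining items are precisely where the hypotheses enter. For pseudo-nullity, the assumption $\dim D_v \ge 2$ for every $v \in \Sigma_p$ guarantees that each semi-local module $\bigoplus_{w \mid v} \Z_p[[G/D_w]]$ (and its $H^2$-analogue) has Krull codimension at least $2$ in $\Z_p[[G]]$ and is therefore pseudo-null; the restriction $p \ge 3$ keeps the Pontryagin duality free of $2$-torsion artefacts. For the twist, one must show that for any $\Z_p$-free finitely generated $\Z_p[[G]]$-module $N$ of $\Z_p$-rank $s$ one has $l_0(M \otimes_{\Z_p} N) = s \cdot l_0(M)$; this reduces via a composition series to the rank-one case, which can be checked directly from the Cuoco--Monsky structure theorem. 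The principal obstacle is exactly this twist invariance: although true, it requires a careful argument, because twisting by a non-trivial character of $G$ typically changes the characteristic divisor even though the individual $l_0$-contributions are preserved.
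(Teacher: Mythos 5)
Your proposal takes a genuinely different route from the paper. The paper proves Theorem~\ref{thm:C} (= Theorem~\ref{thm:l0_Gleichheit}) by first establishing the $p^n$-quotient refinement of the Cuoco--Monsky class number formula (Theorem~\ref{thm:ckmm}), then transferring it to the fine Selmer side via the $p^k$-rank comparison of Theorem~\ref{newrankthm}, the control theorem (Theorem~\ref{thm:control1} / Lemma~\ref{lemma:katharinas_control_thm}), and finally Lemma~\ref{lemma:ckmm2}, after showing that $\rg_{\Z_p}\bigl((Y_{A,\Sigma_p}^{(K_\infty)})_{G_n}\bigr) = \Ok(p^{(l-2)n})$. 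You instead propose a direct algebraic comparison of the two $\Lambda_l$-modules via the identification of the fine Selmer group as a Hom group, together with pseudo-nullity and a twist-invariance lemma for $l_0$. The algebraic approach, if completed, would be shorter and more structural; the paper's route produces the auxiliary $p^n$-quotient asymptotics as a by-product, which are of independent interest.

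However, there are two genuine gaps in your sketch.

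First, the class field theory step as you state it fails. You introduce $X_{\Sigma_p}(K_\infty) := \Gal(K_{\Sigma_p}/K_\infty)^{\mathrm{ab},p}$, the Galois group of the maximal abelian pro-$p$ extension of $K_\infty$ unramified outside $\Sigma_p$, and claim that class field theory relates it to $Y(K_\infty)$ ``modulo a module built from the decomposition subgroups above $p$'' with pseudo-null errors. This is not true: the kernel of $X_{\Sigma_p}(K_\infty) \twoheadrightarrow Y(K_\infty)$ is generated by inertia groups at $\Sigma_p$ and contributes positive $\Z_p[[G]]$-rank to $X_{\Sigma_p}(K_\infty)$ (which is typically non-torsion), so it cannot be pseudo-null. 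The correct object is not $X_{\Sigma_p}$ but the $\Sigma_p$-split class group $Y_{\Sigma_p}(K_\infty)$: a homomorphism $\phi\colon \Gal(K_{\Sigma_p}/K_\infty) \to A[p^\infty]$ that vanishes on all local decomposition groups over $\Sigma_p$ cuts out an extension of $K_\infty$ in which every prime of $\Sigma_p$ splits, hence one that is everywhere unramified and $\Sigma_p$-split; this gives the exact identification $\Sel_{0,A,\Sigma_p}(K_\infty) \cong \textup{Hom}(Y_{\Sigma_p}(K_\infty),A[p^\infty])$, with no Poitou--Tate error terms left to control. The surjection $Y(K_\infty) \twoheadrightarrow Y_{\Sigma_p}(K_\infty)$ does have pseudo-null kernel when $\dim D_v \ge 2$ for all $v\in\Sigma_p$, because the kernel is a quotient of $\bigoplus_{v\in\Sigma_p}\Z_p[[G/D_v]]$, whose annihilator has height $\ge 2$. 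You should replace the two separate pseudo-isomorphism claims (Poitou--Tate and class field theory) with this single direct identification.

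Second, the twist-invariance claim $l_0(M\otimes_{\Z_p}N) = s\cdot l_0(M)$ for $\Z_p$-free $N$ of rank $s$ is the heart of your argument, and your sketch (``reduces via a composition series to the rank-one case'') glosses over the fact that the $G$-action on $N$ need not be triangularizable over $\Z_p$, or even over $\Q_p$. The key observation that makes this work, and that you do not state, is that the reduction $\overline\rho\colon G \to GL_s(\F_p)$ has $p$-group image (since $G$ is pro-$p$) and is therefore unipotent; hence $N/pN$ has a filtration by $\F_p$ with trivial $G$-action, and $(M\otimes N)/p \cong (M/p)\otimes_{\F_p}(N/p)$ has a filtration with graded pieces $\cong M/p$ as $\F_p[[G]]$-modules. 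One then compares characteristic ideals over $\F_p[[G]]$, which is precisely what $l_0$ sees. (Your remark that $p\ge 3$ ``keeps the Pontryagin duality free of $2$-torsion artefacts'' is not the reason the hypothesis appears; the unipotence argument works equally well for $p=2$, and $p\ge3$ is needed elsewhere in the paper.) Finally, a small numerical slip: your first paragraph asserts $\mu(Y_{A,\Sigma_p}^{(K_\infty)}) = \mu(Y(K_\infty))$, but Theorem~\ref{thm:comparing ranks} gives $\mu(Y_{A,\Sigma_p}^{(K_\infty)}) = 2d\,\mu(Y(K_\infty))$, consistent with the corollary after Theorem~\ref{thm:vergleich_l0}; this does not affect the logic but should be corrected.
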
 
In order to prove this result, we first derive a $p^n$-quotient version of the class number formula of Cuoco and Monsky (see Theorem~\ref{thm:ckmm}): 
\[ v_p(|Y(K_n)/p^n Y(K_n)|) = m_0(K_\infty/K) p^{ln} + l_0(K_\infty/K) n p^{(l-1)n} + \Ok(p^{(l-2)n}). \] 
Here $Y(K_n)$ denotes the $p$-primary subgroup of the ideal class group of $K_n$, $n \in \N$, and $v_p$ is the $p$-adic valuation, normalised so that $v_p(p) = 1$. 

Then we apply our comparison result (Theorem~\ref{newrankthm}) in order to derive a similar asymptotic formula for the growth of $v_p(|Y_{A}^{(K_n)}/p^n Y_{A}^{(K_n)}|)$. Finally, we use a control theorem (see Lemma~\ref{lemma:katharinas_control_thm}) in order to deduce a formula for 
$$v_p(|(Y_{A}^{(K_\infty)})_{G_n}/p^n (Y_{A}^{(K_\infty)})_{G_n}|), $$ 
where ${G_n = \Gal(K_\infty/K_n)}$ for every ${n \in \N}$. But the growth of these quotients of Galois coinvariants can also be described in terms of the generalised Iwasawa invariants of $Y_{A}^{(K_\infty)}$ -- this provides a link between $l_0(Y(K_\infty))$ and $l_0(Y_{A}^{(K_\infty)})$. 

Finally, we compare $l_0(Y_{A}^{(K_\infty)})$ and $\rg_{\Z_p\llbracket H\rrbracket }(Y_{A}^{(K_\infty)})$ in a suitable setting (see Theorem~\ref{thm:l0absch}): 
\begin{thm} \label{thm:D} 
  Let $K_\infty/K$ and $A$ be as in Theorem~\ref{thm:C}. Let $K_\infty^{(1)}$ be a $\Z_p$-extension of $K$ contained in $K_\infty$, and write ${H = \Gal(K_\infty/K_\infty^{(1)})}$. If $Y_{A}^{(K_\infty)}$ is finitely generated over $\Z_p\llbracket H\rrbracket $ and the $\mu$-invariant of $Y_{A}^{(K_\infty)}$ as a $\Z_p\llbracket H\rrbracket $-module is zero, then 
  \[ \rg_{\Z_p\llbracket H\rrbracket }(Y_{A}^{(K_\infty)}) \ge 2d l_0(Y(K_\infty)) = l_0(Y_{A}^{(K_\infty)}). \] 
\end{thm}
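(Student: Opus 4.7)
The equality $l_0(Y_{A, \Sigma_p}^{(K_\infty)}) = 2d \cdot l_0(Y(K_\infty))$ is immediate from Theorem~\ref{thm:C}, so the substance of the statement is the inequality $\rg_{\Z_p[[H]]}(X) \ge l_0(X)$ with $X = Y_{A, \Sigma_p}^{(K_\infty)}$. In fact, this is a purely module-theoretic assertion: for any finitely generated $\Z_p[[G]]$-module that is also finitely generated over $\Z_p[[H]]$ and satisfies $\mu_{\Z_p[[H]]} = 0$, one expects $l_0(X) \le \rg_{\Z_p[[H]]}(X)$.

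The plan is to compare two asymptotic estimates for $v_p(|X_{G_n}/p^n X_{G_n}|)$. On the one hand, the generalised Cuoco-Monsky $p^n$-quotient asymptotic (Theorem~\ref{thm:ckmm}, combined with the control-type statement of Lemma~\ref{lemma:katharinas_control_thm}, or extended directly to general torsion $\Z_p[[G]]$-modules) gives
\[ v_p(|X_{G_n}/p^n X_{G_n}|) = m_0(X) p^{ln} + l_0(X) n p^{(l-1)n} + \Ok(p^{(l-1)n}). \]
On the other hand, the hypothesis $\mu_{\Z_p[[H]]}(X) = 0$ together with the $\Z_p[[H]]$-structure theorem yields a pseudo-isomorphism $X \to \Z_p[[H]]^r \oplus T$, where $r = \rg_{\Z_p[[H]]}(X)$ and $T$ is a $\Z_p[[H]]$-torsion module with $\mu_{\Z_p[[H]]}(T) = 0$. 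Fixing a topological generator $\gamma$ of $\Gamma = G/H \cong \Z_p$, whose action on the free summand is described by a matrix $A \in M_r(\Z_p[[H]])$, one can check that $(\Z_p[[H]]^r)_{G_n}/p^n$ has $p$-valuation bounded by $r n p^{(l-1)n} + \Ok(p^{(l-1)n})$, while the torsion summand $T$ and the pseudo-null kernel and cokernel of the pseudo-isomorphism contribute only $\Ok(p^{(l-1)n})$. Comparing the two asymptotic formulas then forces $m_0(X) = 0$ and $l_0(X) \le r$.

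The main technical obstacle is that the pseudo-isomorphism above is only $\Z_p[[H]]$-linear, whereas the asymptotic formula involves the entire $G$-action. One must therefore transport the $\gamma$-action from $X$ across to $\Z_p[[H]]^r \oplus T$ and verify that the discrepancy on the pseudo-null kernel and cokernel remains within the error term; this in turn requires an explicit computation of the $p^n$-quotient of $\Z_p[[H]]^r/(A^{p^n} - I)\Z_p[[H]]^r$ relative to the augmentation ideal of $H/H^{p^n}$. A secondary point is to ensure that the Cuoco-Monsky-style $p^n$-quotient asymptotic applies to $X_{G_n}$, since Theorem~\ref{thm:ckmm} is stated for $Y(K_n)$; the required extension should follow from the same techniques used in the proof of Theorem~\ref{thm:C}, together with Lemma~\ref{lemma:katharinas_control_thm}.
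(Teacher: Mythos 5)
Your reduction of the theorem to the inequality $l_0(X) \le \rg_{\Z_p[[H]]}(X)$ (with $X = Y_{A,\Sigma_p}^{(K_\infty)}$) is correct, and your first asymptotic formula for $v_p(|X_{G_n}/p^n X_{G_n}|)$ is exactly what Theorem~\ref{thm:vergleich_l0} supplies. The gap is in your second estimate. You want to bound $v_p(|X_{G_n}/p^n X_{G_n}|)$ using the $\Z_p[[H]]$-structure of $X$, and you correctly identify the obstruction: the pseudo-isomorphism $X \to \Z_p[[H]]^r \oplus T$ is only $\Z_p[[H]]$-linear, so $\Z_p[[H]]^r \oplus T$ carries no canonical $\gamma$-action, the matrix $A$ you invoke is not well-defined, and the proposed computation of $\Z_p[[H]]^r/(A^{p^n}-I)$ never gets off the ground. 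You flag this as a ``main technical obstacle'' and leave it unresolved, which is not the same as resolving it; as written, the argument does not close.

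The paper sidesteps the whole issue by never taking $G_n$-coinvariants on the module-theoretic side. Since $G \cong \Z_p^l$ is abelian and $H$ is a direct factor, $H_n = H^{p^n} \subseteq G^{p^n} = G_n$, so $X_{G_n}$ is a quotient of $X_{H_n}$, whence
\[ v_p(|X_{G_n}/p^n X_{G_n}|) \le v_p(|X_{H_n}/p^n X_{H_n}|). \]
The right-hand side is a pure $\Z_p[[H]]$-computation, requiring no knowledge of the $\gamma$-action: Harris's result (Proposition~\ref{prop:harris}) gives $\rg_{\Z_p}(X_{H_n}) = r\,p^{(l-1)n} + \Ok(p^{(l-2)n})$, and the hypothesis $\mu_{\Z_p[[H]]}(X)=0$ together with regularity of the torsion annihilators and \cite[Lemma~3.1]{cuoco-monsky} gives $\rg_p(X_{H_n}[p^\infty]) = \Ok(p^{(l-2)n})$, so $v_p(|X_{H_n}/p^n X_{H_n}|) = n\,r\,p^{(l-1)n} + \Ok(n\,p^{(l-2)n})$. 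Comparing with Theorem~\ref{thm:vergleich_l0} (and noting $m_0(X) = 0$, which is automatic since $X$ is finitely generated over $\Z_p[[H]]$ with $\mu_{\Z_p[[H]]}(X) = 0$) yields $2d\,l_0(Y(K_\infty)) \le r$ by comparing leading coefficients, with no transport of the $\gamma$-action needed. You should replace your attempt to compute $X_{G_n}$ from the elementary $\Z_p[[H]]$-module by this $H_n$-coinvariants estimate and the trivial quotient inequality; that is the missing ingredient.
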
 

The paper is organised as follows. After introducing the necessary background on uniform extensions, Iwasawa modules and fine Selmer groups in Section~\ref{section:2}, we recall several facts on abelian varieties with complex multiplication that will provide us with a class of interesting examples. In Section~\ref{subsection:classnumberformulas}, we collect known results on asymptotic class number formulas in $p$-adic Lie extensions. 

The technical auxiliary results needed for the proofs of our main results are proved in Section~\ref{section:5} (growth of Galois coinvariant modules of Iwasawa modules, new asymptotic class number formulas, and auxiliary results on $\mu$-invariants) and in Section~\ref{section:6} (control theorems). 

In Section~\ref{section:main}, we prove our main results, including Theorems~\ref{thm:A}, \ref{thm:B}, \ref{thm:C} and \ref{thm:D}, and also several interesting applications and corollaries. 

\vspace{2mm} 
\textbf{Acknowledgements.} We thank Meng Fai Lim and Debanjana Kundu for kindly sending us a preprint of their work and answering our questions. We would also like to thank Damaris Schindler for supporting a visit of the first author in G\"ottingen. Part of this research was conducted while the second author was a postdoc at the University of G\"ottingen.

\section{Background and notation} \label{section:2} 
\subsection{General notation} \label{section:notation_general} 
We denote by $\N$ the set of natural numbers, which includes 0. Let ${p \in \N}$ be a prime number, which will be fixed throughout the article. For any finitely generated group $G$, we denote by $G[p^\infty]$ the subset of elements of $p$-power torsion elements. If $G$ is abelian, then $G[p^\infty]$ is a subgroup of $G$. 

For any discrete $\Z_p$-module $M$, we denote by 
\[ M^\vee = \textup{Hom}_{\textup{cont}}(M, \Q_p/\Z_p)\] 
the \emph{Pontryagin dual} of $M$ (here $\textup{Hom}_{\textup{cont}}$ means the set of continuous homomorphisms). 

In what follows, we fix a number field $K$. If $N/L$ are any algebraic (possibly infinite) extensions of $K$  and $v$ is a prime of $L$, then we denote by $\Sigma_v(N)$ the set of primes of $N$ above $v$. Most prominently, the set $\Sigma_p := \Sigma_p(K)$ of primes above $p$ in our base field $K$ will play an important role throughout the paper. More generally, for any finite set $\Sigma$ of primes of $K$ and $N$ as above, we denote by $\Sigma(N)$ the set of primes of $N$ which divide some $v \in \Sigma$. 

If $A$ is an abelian variety defined over the number field $K$, then we will denote by $\Sigma_{\textup{br}}(A)$ the finite set of primes of $K$ where $A$ has bad reduction. 

\subsection{Uniform extensions} \label{section:notation_extensions} 
Let $G$ be a finitely generated pro-$p$ group. Let $(G_n)_{n \ge 0}$ be the subgroups of the lower $p$-series of $G$ (see \cite[Definition~1.15]{Dixon}; in the notation of \cite{Dixon}, we let $G_n = P_{n+1}(G)$ for every $n \in \N$). 

\begin{def1}
$G$ is called \emph{uniform of dimension $l \in \N$} if $G$ is profinite, topologically generated by $l$ generators, and there exists a filtration of $G$ by the $G_n$, i.e. we have 
\[ G = G_0 \supseteq G_1 \supseteq \ldots \supseteq G_n \supseteq \ldots \] 
such that each $G_{n+1}$ is normal in $G_n$ and ${G_n/G_{n+1} \cong (\Z/p\Z)^l}$. 
\end{def1}
If $K_\infty/K$ is a normal $p$-adic Lie extension with Galois group $G$ being uniform of dimension $l$, then there exists a canonical sequence of intermediate fields $K_n \subseteq K_\infty$ of degree $p^{ln}$ over $K$, fixed by the subgroups $G_n := P_{n+1}(G)$. Since each $G_n \subseteq G$ is a normal subgroup in view of \cite[Lemma~2.4]{Dixon}, the $K_n/K$ are Galois extensions. 

In view of \cite[Theorem~3.6]{Dixon}, we have $G_n = G^{p^n}$ for every $n \in \N$. Note: although $G$ need not be commutative, the subset of $p^n$-th powers of elements of $G$ automatically forms a subgroup because $G$ is uniform. Moreover, each uniform pro-$p$-group $G$ is torsion-free, i.e. it satisfies $G[p^\infty] = \{0\}$ (see \cite[Theorem~4.5]{Dixon}). 

In this article, we consider only normal uniform extensions $K_\infty/K$. Since any compact $p$-adic Lie group contains an open normal uniform subgroup by a theorem of Lazard (see \cite[Corollary~8.34]{Dixon}), this is not a severe restriction. The index $n$ will always mark the subgroup $G_n = P_{n+1}(G) = G^{p^n}$ of $G$ fixing the appropriate intermediate field $K_n \subseteq K_\infty$, $n \in \N$. 

If $G$ is a uniform pro-$p$-group, then the completed group ring is a Noetherian ring without zero divisors by \cite[Theorem~2.3]{coates-howson}. For any finitely generated $\Z_p\llbracket G\rrbracket $-module $X$, we define its $\Z_p\llbracket G\rrbracket $-rank by 
$$ \rg_{\Z_p\llbracket G\rrbracket }(X)= \dim_{\mathcal{Q}(G)}(\mathcal{Q}(G) \otimes_{\Z_p\llbracket G\rrbracket }(X)), $$ 
where $\mathcal{Q}(G)$ denotes the skew field of fractions of $\Z_p\llbracket G\rrbracket $ (see \cite[Chapter~10]{goodearl-warfield}). Following Howson (see \cite[(33)]{howson}), we define the $\mu$-invariant of such a finitely generated $\Z_p\llbracket G\rrbracket $-module $X$ as 
\begin{align} \label{eq:mu}  \mu_{\Z_p\llbracket G\rrbracket }(X) = \sum_{i \ge 0} \rg_{\F_p\llbracket G\rrbracket }(p^i X[p^\infty]/p^{i+1} X[p^\infty]), \end{align} 
where $\F_p = \Z_p/p \Z_p$ is the finite field with $p$ elements. 

Throughout the article, if the group ring is clear from the context, then we simply write $\mu(X)$. 

The most classical example of uniform pro-$p$-groups are the (commutative) direct products $\Z_p^l$, $l \ge 1$. A \emph{$\Z_p^l$-extension} $K_\infty/K$ is a normal extension such that ${G = \Gal(K_\infty/K)}$ is isomorphic to $\Z_p^l$. In this case, the theory of finitely generated $\Z_p\llbracket G\rrbracket $-modules is particularly well understood. If $G \cong \Z_p^l$, then $\Z_p\llbracket G\rrbracket $ is isomorphic to the ring $\Lambda_l = \Z_p\llbracket T_1, \ldots, T_l\rrbracket $ of formal power series with coefficients in $\Z_p$. This local ring is a unique factorisation domain. By a general structure result, each finitely generated $\Z_p\llbracket G\rrbracket $-module $X$ is pseudo-isomorphic to an elementary $\Lambda_l$-module of the form 
\begin{align} \label{eq:elementary}  E_X = \Lambda_l^r \oplus \bigoplus_{i = 1}^s \Lambda_l/(p^{e_i}) \oplus \bigoplus_{j = 1}^t \Lambda_l/(h_j), \end{align}  
where $h_1, \ldots, h_t \in \Lambda_l$ are coprime with $p$ and where \emph{pseudo-isomorphic} means that there exists a $\Lambda_l$-module homomorphism $\varphi: X \longrightarrow E_X$ whose kernel and cokernel are \emph{pseudo-null}, i.e. annihilated by two relatively prime elements of the UFD $\Lambda_l$. The \emph{characteristic power series} $F_X$ of $X$ is defined as $F_X = p^{\sum_{i=1}^s e_i} \cdot \prod_{j=1}^t h_j$. 

If $l = 1$, then the $h_j$ are in fact so-called distinguished polynomials. One defines the \emph{Iwasawa invariants} of such a $\Lambda_1$-module $X$ as ${\mu(X) = \sum_{i = 1}^s e_i}$ and ${\lambda(X) = \sum_{j = 1}^t \deg(h_j)}$, and one can show that this definition of the $\mu$-invariant coincides with the definition~\eqref{eq:mu} given above (see \cite[Lemma~2.4]{KM21}). For arbitrary $l$, Cuoco and Monsky defined in \cite{cuoco-monsky} the \emph{generalised Iwasawa invariants} of a finitely generated $\Lambda_l$-module $X$ via identification of $\Lambda_l$ with $\Z_p\llbracket G\rrbracket $ (recall that $G \cong \Z_p^l$). Fixing an elementary $\Lambda_l$-module $E_X$ as in~\eqref{eq:elementary} above, the generalised $\mu$-invariant of $X$ is defined as $m_0(X) = \sum_{i=1}^s e_i$. Again, one can show that $m_0(X) = \mu(X)$ in the sense of \eqref{eq:mu}. 

The definition of a generalised Iwasawa $\lambda$-invariant is more involved. Write $F_X = p^{m_0(X)} \cdot f_X$ for some power series $f_X \in \Lambda_l$ which is coprime with $p$, and let $\overline{f_X} \in \Lambda_l/p \Lambda_l$ be the reduction modulo $p$. Then $l_0(X) = \sum_{\mathcal{P}} v_{\mathcal{P}}(\overline{f_X})$, where the sum runs over all primes of $\Lambda_l/p \Lambda_l$ of the form $\mathcal{P} = (\overline{g-1})$ for some $g \in G \setminus G^p$. Here $v_{\mathcal{P}}$ denotes the $\mathcal{P}$-adic valuation. 

In Section~\ref{subsection:classnumberformulas}, we define the (generalised) Iwasawa invariants of a (multiple) $\Z_p$-extension, and we will describe their arithmetical meaning. Note that for each number field $K$, there exists at least one $\Z_p$-extension of $K$, the so-called \emph{cyclotomic $\Z_p$-extension}. In this article, we will denote this cyclotomic $\Z_p$-extension of $K$ by $K_\infty^c$.

\subsection{Fine Selmer groups} \label{section:notation_fine_Selmer_Groups} 
For a number field $K$ and a prime $v$ of $K$, we denote by $K_v$ the completion of $K$ at $v$. We fix an algebraic closure $\overline{K}$ of $K$, and we denote by ${G_K = \Gal(\overline{K}/K)}$ the absolute Galois group of $K$. For every $G_K$-module $X$, we let ${H^i(K,X) = H^i(G_K,X)}$ be the corresponding Galois cohomology groups, $i \in \N$. Moreover, for every relative Galois extension $L/K$ and every $\Gal(L/K)$-module $X$, we write ${H^i(L/K, X) = H^i(\Gal(L/K),X)}$. 

Now let $A$ be an abelian variety defined over the number field $K$, let $\Sigma$ be a finite set of primes of $K$ which contains all the primes above $p$ and the primes where $A$ has bad reduction, and fix $K$, $A$ and $\Sigma$. Let $K_\Sigma$ be the maximal algebraic (non-necessarily abelian) extension of $K$ which is unramified outside of $\Sigma$. \begin{def1}
 In analogy with \cite{coates-sujatha} and \cite{lim-murty}  we define the $p$-primary \emph{fine $\Sigma$-Selmer group} of $A$ over $K$ as 
\[ \Sel_{0,A, \Sigma}(K) = \ker \left( H^1(K_\Sigma/K,A[p^\infty]) \longrightarrow \prod_{v \in \Sigma} H^1(K_v,A[p^\infty])\right).\] \end{def1}
Note that by \cite[Lemma 4.1]{lim-murty} this definition does not depend on the finite set $\Sigma$, and we can therefore omit $\Sigma$ from the notation.
If $L$ is a number field such that $K \subseteq L \subseteq K_\Sigma$, then we can define analogously the fine Selmer group $\Sel_{0,A}(L)$. If $M \subseteq K_\Sigma$ is an infinite extension of $K$, then we define 
\[ \textup{Sel}_{0,A}(M)=\varinjlim_{K \subseteq L \subseteq M} \textup{Sel}_{0,A}(L), \] 
where $L \subseteq M$ runs through all the finite subextensions. Note that $L_\Sigma = K_\Sigma$ for each such $L$, since $M/K$ is unramified outside of $\Sigma$, and therefore $\Sel_{0,A}(L)$ is a subgroup of $H^1(K_\Sigma/L,A[p^\infty])$ for each such $L$. 

We also need a $p^k$-version of the fine Selmer groups.  
\begin{def1}
For $k \in \N$ we define the  
\emph{$p^k$-fine Selmer groups} by
\[ \Sel_{0,A[p^k], \Sigma}(L) = \ker \left( H^1(K_\Sigma/L, A[p^k]) \longrightarrow \prod_{w \in \Sigma(L)} H^i(L_w, A[p^k])\right) \] 
for every number field $L \subseteq K_\Sigma$ containing $K$.\end{def1} By taking limits we can define these groups for arbitrary algebraic extensions $M \subseteq K_\Sigma$ of $K$. Note that the latter definition really depends on $\Sigma$. So in this case it is essential to keep the superscript $\Sigma$ (see \cite[proof of Theorem~5.1]{lim-murty}).

Now suppose that $K_\infty/K$ is a uniform $p$-adic Lie extension, and let $\Sigma$ be a finite set of primes of $K$ which contains $\Sigma_p$, $\Sigma_{\textup{br}}(A)$ and all the primes of $K$ which ramify in $K_\infty$. Then we can define fine Selmer groups of $A$ over each intermediate field $K_n \subseteq K_\infty$. We denote the corresponding Pontryagin duals by 
\[ Y_{A}^{(L)} = \Sel_{0,A}(L)^\vee, \] 
and we define the projective limit 
\[ Y_{A}^{(K_\infty)} = \varprojlim_n Y_{A}^{(K_n)}\] 
with respect to the corestriction maps.

\section{Abelian varieties with complex multiplication} \label{section:3} 
In this section we summarise the facts needed on abelian varieties with complex multiplication. We define this notion following the seminal article \cite{serre-tate} of Serre and Tate. 
\begin{def1} 
  Let $K$ be a number field. An abelian variety $A$ of dimension $d$ defined over $K$ \emph{has complex multiplication by a CM number field $F$} if $[F:\Q] = 2d$ and there exists an injection $F \hookrightarrow \Q \otimes_\Z \textup{End}_K(A)$.
\end{def1} 

\begin{thm}[Serre-Tate] \label{thm:serre-tate} 
  Let $A$ be an abelian variety defined over the number field $K$, and suppose that $A$ has complex multiplication. Then $A$ has potentially good reduction everywhere. In fact, there exists a finite cyclic extension $K'/K$ such that $A$ has good reduction at every prime of $K'$. 
\end{thm}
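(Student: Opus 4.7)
The plan is to verify the Néron--Ogg--Shafarevich criterion for potentially good reduction by exploiting the rigidity that complex multiplication imposes on the $\ell$-adic Galois representations attached to $A$. Fix a rational prime $\ell$ and set $V_\ell(A) = T_\ell(A) \otimes_{\Z_\ell} \Q_\ell$. The inclusion $F \hookrightarrow \Q \otimes_\Z \textup{End}_K(A)$ endows $V_\ell(A)$ with an action of the étale $\Q_\ell$-algebra $F \otimes_\Q \Q_\ell$. Since
\[ \dim_{\Q_\ell} V_\ell(A) = 2d = [F:\Q] = \dim_{\Q_\ell}(F \otimes_\Q \Q_\ell), \]
a dimension count shows that $V_\ell(A)$ is free of rank one over $F \otimes_\Q \Q_\ell$. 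The continuous Galois representation $\rho_\ell \colon G_K \longrightarrow \textup{GL}_{\Q_\ell}(V_\ell(A))$ thus centralises the $F$-action and takes values in the commutative group $(F \otimes_\Q \Q_\ell)^\times$.

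Next, I would localise at a non-archimedean prime $v$ of $K$ lying above a rational prime different from $\ell$, and restrict $\rho_\ell$ to the inertia group $I_v$. By Grothendieck's monodromy theorem, there exists an open subgroup $I_v' \subseteq I_v$ on which $\rho_\ell$ takes unipotent values. But the unit group of a finite product of finite extensions of $\Q_\ell$ contains no non-trivial unipotent element, so $\rho_\ell(I_v') = \{1\}$ and hence $\rho_\ell(I_v)$ is finite. The criterion of Néron--Ogg--Shafarevich then yields that $A$ has potentially good reduction at $v$. Since the argument works for every non-archimedean prime $v$, $A$ has potentially good reduction everywhere.

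The main obstacle is to package these local trivialisations into a single finite extension $K'/K$, and moreover to make $K'/K$ cyclic. The strategy here is to use the Hecke character formalism attached to a CM abelian variety: the semisimplified representation $\rho_\ell$ is described by an algebraic Hecke character of the reflex field associated to a CM type for $(A, F)$, and the finite set of ramified primes of this Hecke character cuts out a canonical finite abelian extension of $K$ over which $A$ acquires good reduction simultaneously at every prime. Refining this abelian extension to a cyclic one is the most delicate step and is obtained through an explicit analysis of the image of the reciprocity map attached to the CM character, as carried out in the original article \cite{serre-tate} of Serre and Tate.
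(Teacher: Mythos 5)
The paper disposes of this statement with a bare citation to \cite[Theorem~7]{serre-tate}, so your proposal is really an attempt to unpack that citation, and the shape of your argument -- a commutative image for $\rho_\ell$, Grothendieck's monodromy theorem, the N\'eron--Ogg--Shafarevich criterion, and a Hecke-character analysis for the cyclic refinement -- is indeed the shape of Serre and Tate's proof. The problem is the first step. You claim that because $\dim_{\Q_\ell}V_\ell(A) = [F:\Q] = \dim_{\Q_\ell}(F\otimes_\Q\Q_\ell)$, ``a dimension count shows that $V_\ell(A)$ is free of rank one over $F\otimes_\Q\Q_\ell$.'' This inference is false. Since $F\otimes_\Q\Q_\ell \cong \prod_\lambda F_\lambda$ is a product of local fields, $V_\ell(A)$ splits as $\bigoplus_\lambda V_\lambda$ with $V_\lambda$ an $F_\lambda$-vector space, and the equality $\sum_\lambda [F_\lambda:\Q_\ell]\dim_{F_\lambda}V_\lambda = \sum_\lambda [F_\lambda:\Q_\ell]$ does not force every $\dim_{F_\lambda}V_\lambda$ to equal $1$: some $V_\lambda$ could vanish and others occur with multiplicity two. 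Faithfulness of the $F$-action does not repair this, because the idempotents $e_\lambda$ live only in $F\otimes_\Q\Q_\ell$, not in $F$. Freeness is a genuine theorem (it is \cite[Theorem~5]{serre-tate} and its corollary), proved via the CM type and the Hodge/\'etale comparison, or alternatively via the Weil pairing and the anti-involution on $F$ induced by duality. Without it you cannot conclude that $\rho_\ell$ lands in the commutative group $(F\otimes_\Q\Q_\ell)^\times$, and the monodromy step loses all its force.

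Two minor further remarks. As written, the monodromy argument only treats primes $v\nmid\ell$; one should fix $v$ and then choose $\ell$ prime to its residue characteristic, which is legitimate since freeness holds for every $\ell$. And for the cyclicity of $K'/K$ you do not give an argument at all -- you gesture at the Hecke-character formalism and then defer to \cite{serre-tate}, which is exactly what the paper itself does, so your proposal ends precisely where the nontrivial refinement in the statement begins.
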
 
\begin{proof} 
  See \cite[Theorem~7]{serre-tate}. 
\end{proof} 

In our applications, we are interested in studying uniform $p$-extensions $K_\infty/K$ such that $A[p^n] \subseteq A(K_n)$ for every $n \in \N$. In order to show that such extensions do exist for abelian varieties with complex multiplication, we make use of the following 
\begin{thm} \label{thm:CM-suff} 
  Let $K$ be a number field, and let $A$ be an abelian variety defined over $K$ which has complex multiplication. Then there exist a finite extension $K'$ of $K$ and a $\Z_p^l$-extension of $K'$ which contains $A[p^\infty]$. In fact, the extension $K'(A[p^\infty])/K'$ is \emph{abelian} and unramified outside of $p$. 
\end{thm}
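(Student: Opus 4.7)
The plan is to enlarge $K$ so that the CM structure is defined over the base field and $A$ acquires good reduction everywhere, and then to exploit the fact that the Galois action on the Tate module, seen as a rank-one module over $F \otimes_\Q \Q_p$, must factor through an abelian group.

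First I would replace $K$ by a finite extension $K'$ such that (i) every endomorphism in a chosen order of $F$ acting on $A$ is defined over $K'$, and (ii) $A$ has good reduction at every finite place of $K'$; (ii) is exactly the content of Theorem~\ref{thm:serre-tate}, while (i) is standard since $\mathrm{End}(A)$ becomes defined over some finite extension of $K$. Next I would invoke the criterion of N\'eron--Ogg--Shafarevich at each prime $v \nmid p$ of $K'$ to see that $A[p^\infty]$ is unramified at every such $v$; hence $K'(A[p^\infty])/K'$ is unramified outside the primes above $p$.

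The core CM step is to prove that $K'(A[p^\infty])/K'$ is abelian. Because the $F$-action on $A$ is defined over $K'$, it commutes with the action of $G_{K'}$ on $V_p(A) = T_p(A) \otimes_{\Z_p} \Q_p$. A dimension count (both sides have $\Q_p$-dimension $2d$) shows that $V_p(A)$ is free of rank one over the \'etale $\Q_p$-algebra $F \otimes_\Q \Q_p \cong \prod_{\mathfrak p \mid p} F_{\mathfrak p}$, so the Galois representation $G_{K'} \to \mathrm{Aut}_{\Q_p}(V_p(A))$ factors through the abelian group $(F \otimes_\Q \Q_p)^\times$. In particular, $\Gal(K'(A[p^\infty])/K')$ is abelian and embeds as a closed subgroup of the topologically finitely generated profinite abelian group $(\Ok_F \otimes_\Z \Z_p)^\times$.

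To finish, any such closed subgroup contains an open subgroup isomorphic to $\Z_p^l$ for some $l \ge 0$, namely the torsion-free pro-$p$ part. Let $K''$ be its fixed field. Then $K''(A[p^\infty])/K''$ is a $\Z_p^l$-extension contained in $K'(A[p^\infty])$, and it remains abelian and unramified outside $p$ (the latter because unramifiedness of $L/K'$ at a prime $w \nmid p$ of $K''$ is inherited from unramifiedness of $L/K'$ at the prime of $K'$ below $w$). Relabelling $K''$ as $K'$ yields the statement. The main obstacle is really the abelian step, which is essentially a citation to the classical fact from complex multiplication that the Tate module of a CM abelian variety is free of rank one over $F \otimes_\Q \Q_p$ once the endomorphisms are rational over the base field; once this is in place, everything else is formal book-keeping on ramification and on the structure of abelian profinite groups.
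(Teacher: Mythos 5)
Your proof is correct, and it reaches the same conclusion by a somewhat different route. The paper handles the two non-trivial steps by citation: it invokes Lang (\emph{Complex Multiplication}, Ch.~4, Thm.~1.1) for the abelianness of $K(A[p^\infty])/K$, and Washington's Corollary~13.6 (the structure of the Galois group of the maximal abelian $p$-ramified pro-$p$ extension) to extract the $\Z_p^l$-extension, possibly after a further finite base change. You instead unpack the first citation: after passing to $K'$ over which $\operatorname{End}(A)$ and good reduction are achieved, you argue directly that $V_p(A)$ is free of rank one over $F \otimes_\Q \Q_p$, so the Galois image lands in the abelian group $(F \otimes_\Q \Q_p)^\times$ — and, being compact, in $(\Ok_F \otimes_\Z \Z_p)^\times$ — which is precisely the content of the CM theory being cited. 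For the second step you replace the appeal to Washington by the elementary structure theory of compact abelian $p$-adic Lie groups: any closed subgroup of $(\Ok_F \otimes_\Z \Z_p)^\times$ is $(\text{finite}) \times \Z_p^l$, so one passes to the fixed field of the torsion part. Both approaches also use Serre--Tate for good reduction after a finite base change and the N\'eron--Ogg--Shafarevich criterion (which is the content of Serre--Tate's Corollary~2(b) the paper cites) for unramifiedness outside $p$. Your version is more self-contained and makes it transparent \emph{why} the extension is abelian; the paper's version is shorter but leans more heavily on external references.
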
 
\begin{proof} 
   It follows from \cite[Chapter~4, Theorem~1.1]{lang_CM} that $K(A[p^\infty])/K$ is abelian. If $K'$ is large enough such that $A$ has good reduction at each prime of $K'$, then the shifted extension $K'(A[p^\infty])/K'$ is unramified outside of $p$ by \cite[Corollary~2(b)]{serre-tate}, and the assertion of the theorem follows from \cite[Corollary~13.6]{wash} (it might be necessary to enlarge $K'$ again if the class number of $K'$ is greater than 1). 
\end{proof} 

Combining the last two theorems, we may derive the following 
\begin{cor} \label{cor:pntorsion} 
  Let $K$ be a number field, and let $A$ be an abelian variety with complex multiplication defined over $K$. Then there exist a finite extension $K'$ of $K$ and a $\Z_p^l$-extension $K_\infty'$ of $K'$ such that $A[p^n] \subseteq A(K'_{n+c})$ for every $n \in \N$, where $c$ is a fixed constant. 
\end{cor}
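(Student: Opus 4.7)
The plan is to combine Theorems~\ref{thm:serre-tate} and \ref{thm:CM-suff} with a standard filtration argument in $GL_{2d}(\Z_p)$. First I would apply Theorem~\ref{thm:CM-suff} to obtain a finite extension $K'/K$ and a $\Z_p^l$-extension $K_\infty'/K'$ with $A[p^\infty] \subseteq K_\infty'$. Set $L_n := K'(A[p^n])$, so that $L_n \subseteq L_\infty := K'(A[p^\infty]) \subseteq K_\infty'$, and write $\Gamma = \Gal(K_\infty'/K') \cong \Z_p^l$. Recall from Section~\ref{section:notation_extensions} that the canonical intermediate field $K_m'$ is the fixed field of $\Gamma^{p^m} = P_{m+1}(\Gamma)$, so the statement to prove is that $L_n \subseteq K_{n+c}'$ for a constant $c$ independent of $n$.

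The action on the Tate module yields a continuous representation $\rho : \Gamma \longrightarrow \textup{Aut}(T_p A) \cong GL_{2d}(\Z_p)$ which factors through $\Gal(L_\infty/K')$, and $L_n$ is the fixed field of the preimage of the $n$-th principal congruence subgroup $U_n := I + p^n M_{2d}(\Z_p)$ (for $p=2$ one uses $I + 2^{n+1} M_{2d}(\Z_2)$ and adjusts $c$ by one at the end). Thus it suffices to show that there exists a constant $c$ with $\rho(\Gamma^{p^{n+c}}) \subseteq U_n$ for every $n$.

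For this I would invoke the standard filtration property of the congruence subgroups: if $M = I + p^k N \in U_k$ with $k \ge 1$ (resp.\ $k\ge 2$ when $p=2$), then the binomial expansion of $M^p$ shows $M^p \in U_{k+1}$; iterating gives $M^{p^n} \in U_{k+n}$. Since $\rho(\Gamma)$ is a pro-$p$ subgroup of $GL_{2d}(\Z_p)$, its image in the finite quotient $GL_{2d}(\Z_p)/U_1 \cong GL_{2d}(\F_p)$ is a finite $p$-group and hence has exponent bounded by a constant $p^{c}$ depending only on $p$ and $d$ (in fact by the exponent of a $p$-Sylow of $GL_{2d}(\F_p)$). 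Therefore $\rho(\Gamma^{p^c}) \subseteq U_1$, and the filtration property gives $\rho(\Gamma^{p^{n+c}}) = \rho(\Gamma^{p^c})^{p^n} \subseteq U_{n+1} \subseteq U_n$, so $\Gamma^{p^{n+c}}$ fixes $A[p^n]$, i.e.\ $L_n \subseteq K_{n+c}'$, as required.

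There is no deep obstacle, since both main ingredients (the existence of the ambient $\Z_p^l$-extension and the filtration estimates in $GL_{2d}(\Z_p)$) are available off the shelf. The only minor technical point is the usual shift by one for $p=2$ arising from the fact that $I + 2M_{2d}(\Z_2)$ is not itself uniform, which is absorbed into the constant $c$.
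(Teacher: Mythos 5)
Your proof is correct and is essentially the same argument as the paper's, just phrased in the language of congruence subgroups $U_n\subseteq GL_{2d}(\Z_p)$: the key step $\rho(U_k)^p\subseteq U_{k+1}$ is exactly the paper's computation that $\sigma\in\Gal(K'(A[p^{n+1}])/K'(A[p^n]))$ satisfies $\sigma^p(Q)=Q+pR=Q$ for $Q\in A[p^{n+1}]$ and $R\in A[p]$, and both proofs then conclude by the same induction from the base case $A[p]\subseteq K'_c$. (Incidentally, your cautionary shift for $p=2$ is harmless but unnecessary here, since $(I+2N)^2=I+4(N+N^2)\in U_2$ already for $k=1$.)
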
 
\begin{proof} 
  Suppose that $K'$ has been chosen such that $A$ has good reduction everywhere and such that $K'(A[p^\infty])/K'$ is a $\Z_p^l$-extension (in particular, we assume this extension to be pro-$p$). 
  
  Let $\sigma \in \Gal(K'(A[p^{n+1}])/K'(A[p^n]))$. We show that $\sigma^p =\id$. Choose any $Q \in A[p^{n+1}]\setminus A[p^n]$. Then the order of $Q$ is equal to $p^{n+1}$. Since 
  $$\sigma(Q)^p = \sigma(Q^p) = Q^p$$ 
  because $\sigma$ fixes $K'(A[p^n])$, $\sigma(Q) - Q \in A[p]$. Writing $\sigma(Q) = Q + R$ for some ${R \in A[p]}$, we may conclude that ${\sigma^i(Q) = Q + R^i}$ for every $i \in \N$. In particular, ${\sigma^p(Q) = Q}$. Since this holds for every ${Q \in A[p^{n+1}]}$, we may conclude that $\sigma^p = \id$. 
  
  This proves that the extension $K'(A[p^{n+1}])/K'(A[p^n])$ is $p$-elementary for every $n \in \N$. The corollary then follows by induction; in fact, the integer $c$ can be chosen as the smallest index $r$ such that $A[p] \subseteq A(K_r')$. 
\end{proof} 

We conclude this section by mentioning an auxiliary lemma which will be used below and which in fact holds for any abelian variety over $K$. 

\begin{lemma}
\label{growth-condition}
Let $K$ be any field of characteristic zero and let $A$ be an abelian variety defined over $K$. Let $K_\infty/K$ be a uniform pro-$p$-extension of $K$ of dimension $l$, with intermediate fields $K_n$. Then \[v_p(|A(K_n)[p^\infty]|)=\mathcal{O}(n).\] 
\end{lemma}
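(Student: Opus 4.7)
The plan is to reduce the problem to bounding the exponent of $A(K_n)[p^\infty]$, and then to bound this exponent linearly in $n$ using a $p$-adic logarithmic analysis of the Galois action on the divisible part of $A(K_\infty)[p^\infty]$.

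First I would observe that $A(K_n)[p^\infty]$ embeds into $A[p^\infty] \cong (\Q_p/\Z_p)^{2d}$, where $d = \dim A$, hence has $\Z_p$-rank at most $2d$. If $p^{m_n}$ denotes its exponent, then $v_p(|A(K_n)[p^\infty]|) \leq 2d \cdot m_n$, so it is enough to show $m_n = \Ok(n)$. Set $M := A(K_\infty)[p^\infty]$, $G := \Gal(K_\infty/K)$, and $H_n := \Gal(K_\infty/K_n)$, so that $A(K_n)[p^\infty] = M^{H_n}$. By Pontryagin duality the abelian group $M$ decomposes as $M \cong F \oplus D$, where $F$ is finite and $D \cong (\Q_p/\Z_p)^r$ ($r \leq 2d$) is the maximal divisible subgroup. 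Since $D$ is characteristic, it is $G$-stable, and the short exact sequence $0 \to D \to M \to M/D \to 0$ gives $|M^{H_n}| \leq |D^{H_n}| \cdot |M/D|$; as $|M/D|$ is a constant, the task reduces to bounding $v_p(|D^{H_n}|)$.

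The $G$-action on $D$ yields a continuous representation $\rho : G \to \textup{Aut}(D) = \textup{GL}_r(\Z_p)$, and because $G$ is uniform, $H_n = G^{p^n}$. Replacing $G$ by an open normal subgroup of bounded index, we may assume $\rho(G) \subseteq 1 + p\,\textup{M}_r(\Z_p)$ (respectively $1 + 4\,\textup{M}_r(\Z_2)$ if $p=2$), so that the $p$-adic logarithm converges on $\rho(G)$. For $g \in G$, setting $X_g := \log \rho(g) \in p\,\textup{M}_r(\Z_p)$, we obtain
\[ \rho(g^{p^n}) - 1 = \exp(p^n X_g) - 1 = p^n X_g \cdot u_n, \qquad u_n \in 1 + p\,\textup{M}_r(\Z_p), \]
and this operator annihilates every $d \in D[p^n]$, because $p^n (X_g u_n d) = X_g u_n (p^n d) = 0$. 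Consequently $D[p^n] \subseteq D^{H_n}$, giving the lower bound $|D^{H_n}| \geq p^{rn}$.

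The matching upper bound $|D^{H_n}| \leq p^{rn + \Ok(1)}$ uses the implicit finiteness of $A(K_n)[p^\infty]$, which forces $V^G = 0$ for $V := \Q_p^r$. A compactness/Lie-algebraic argument then produces finitely many elements $g_1, \ldots, g_s \in G$ whose logarithms $X_{g_i}$ have trivial common kernel on $V$. Applying the identity $\rho(g_i^{p^n}) - 1 = p^n X_{g_i} u_{n,i}$ for each $i$ and intersecting the resulting kernel conditions on $D$, one shows that any $d = v + \Z_p^r \in D^{H_n}$ is confined to the finite set $D[p^{n+\Ok(1)}]$ (the explicit constant depending on the Smith normal forms of the $X_{g_i}$). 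This yields $v_p(|D^{H_n}|) \leq r\,n + \Ok(1) = \Ok(n)$, completing the proof. The most delicate part will be the construction of the elements $g_i$ and the explicit control of the intersection of the kernels; this is where the finiteness hypothesis on $A(K_n)[p^\infty]$ enters, by translating into the Lie-algebraic non-degeneracy $V^G = 0$.
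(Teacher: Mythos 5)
Your argument is correct in outline but genuinely different from the paper's, and also considerably heavier. The paper's proof is a short degree-counting argument in the tower $K_n \subseteq K_{n+1} \subseteq \cdots$: if $p^N$ is the exponent of $A(K_n)[p^\infty]$ and $Q \in A(K_\infty)$ has order $p^{N+1}$, then $Q \notin A(K_n)$ and hence $[K_n(Q):K_n] \ge p$ because $K_\infty/K_n$ is pro-$p$; since $[K_m:K_n]=p^{(m-n)l}$, the exponent of $A(K_m)[p^\infty]$ is bounded by $p^{N+(m-n)l}$, and the $p$-valuation of $A(K_m)[p^\infty] \subseteq A[p^{N+(m-n)l}]$ is at most $2d(N+(m-n)l) = \mathcal{O}(m)$. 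You instead pass to the divisible part $D$ of $A(K_\infty)[p^\infty]$, view the $G$-action as $\rho\colon G \to \mathrm{GL}_r(\Z_p)$, take $p$-adic logarithms, and use the factorisation $\rho(g^{p^n})-1 = p^n X_g u_n$ together with uniformity ($H_n=G^{p^n}$) to control the fixed points. Your route is more structural and in fact yields both an upper \emph{and} a lower bound $|D^{H_n}| = p^{rn+\mathcal{O}(1)}$, whereas the lemma only needs the upper bound; the price is that the last step (extracting the uniform constant in $D^{H_n} \subseteq D[p^{n+\mathcal{O}(1)}]$ from $V^G = 0$ via a compactness argument on primitive vectors) is nontrivial and is, as you note, left as a sketch. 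Three further remarks. First, you invoke the finiteness of $A(K_n)[p^\infty]$ to get $V^G=0$; this hypothesis is not in the lemma statement (which allows an arbitrary characteristic-zero $K$), but it is implicit — the claim is vacuous otherwise — and the paper uses the lemma only over local fields, where finiteness holds, so this is a legitimate reading. Second, replacing $G$ by an open normal subgroup to land in $1+p\,\mathrm{M}_r(\Z_p)$ shifts $n$ by a constant, which is harmless for an $\mathcal{O}(n)$ estimate, but you should say so explicitly. Third, be aware that the paper's tower argument is itself rather terse: the inductive step ``exponent increases by at most $l$ per level'' is asserted rather than carefully proved (the fields generated by torsion points of successively higher order need not be strictly nested), and it is ultimately the same finiteness constraint on the $A(K_m)[p^\infty]$ that makes it work — so your more careful Lie-theoretic treatment is not wasted effort, even if it is overkill for the statement as it is used.
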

Note that the bound $\mathcal{O}(n)$ is far from being optimal. For example, if we consider $K=\Q$ and $K_\infty=\Q_\infty^{\textup{c}}$, then it is well-known that for every elliptic curve $E$, the group $E(\Q^{\textup{c}}_\infty)[p^\infty]$ is finite. So in this case Lemma~\ref{growth-condition} has to be understood as a (coarse) upper bound.
\begin{proof}
   Fix $n$ and let $p^N$ be the exponent of $A(K_n)[p^\infty]$. Assume that there is a $p^{N+1}$-torsion point $Q$ in $K_{\infty}$. It is straight-forward to see that $[K(Q):K]\ge p$. As $[K_{n+1}:K_n]=p^l$, we see that the exponent of $A(K_m)[p^\infty]$ is bounded by $p^{N+(m-n)l}$ for all $m\ge n$. So we can conclude that $A(K_m)[p^\infty]\subseteq A[p^{N+(m-n)l}]$ which has $p$-valuation $2d(N+(m-n)l)$, where $d = \dim(A)$. The claim follows.
\end{proof}

\section{Asymptotic class number formulas} \label{subsection:classnumberformulas} 
For any number field $L$, we denote by $\textup{Cl}(L)$ the ideal class group of $L$, and we let $Y(L) \subseteq \textup{Cl}(L)$ denote its $p$-Sylow subgroup. If $M$ is any (possibly infinite) algebraic extension of the number field $K$, then we define 
\[ \textup{Cl}(M) = \varprojlim_{K \subseteq L \subseteq M} \textup{Cl}(L) \quad \text{ and } \quad Y(M) = \varprojlim_{K \subseteq L \subseteq M} Y(L). \] 
Here $L$ runs over the finite intermediate extensions, and the projective limits are taken with respect to the norm maps. 

Moreover, for a number field $L$ and any finite set $\Sigma$ of primes of $L$, we let $\textup{Cl}_\Sigma(L)$ and $Y_\Sigma(L)$ denote the quotients of $\textup{Cl}(L)$ and $Y(L)$ by the subgroups which are generated by the primes of $\Sigma$. 
For a possibly infinite algebraic extension $M$ of $K$ and a finite set $\Sigma$ of primes of $K$, the projective limits $\textup{Cl}_\Sigma(M)$ and $Y_\Sigma(M)$ are defined in the natural way (for each finite subextension $L$ of $M$, we divide out the subgroup generated by the primes of $L$ which divide some $v \in \Sigma$). By class field theory, $Y_\Sigma(M)$ is isomorphic to $\Gal(H_\Sigma(M)/M)$, where $H_\Sigma(M)$ denotes the maximal abelian pro-$p$-extension of $M$ which is unramified and in which the primes of $M$ above $\Sigma$ are totally split. If $\Sigma = \emptyset$, then we simply write $H(M)$ for the maximal abelian unramified pro-$p$-extension of $M$. 

In the following, we give an overview of known asymptotic class number formulas on the growth of class numbers in $p$-adic Lie extensions of number fields. 
\subsection{$\Z_p$-extensions} 
Historically seen, the first example for a class of $p$-adic Lie extensions with an asymptotic class number formula is of course Iwasawa's theorem on the growth of class numbers in a $\Z_p$-extension $K_\infty/K$, with intermediate fields $K_n$ of degree $p^n$ over $K$ (see \cite{iwasawa}): for each sufficiently large $n \in \N$, we have 
\begin{align} \label{eq:iwasawa} v_p(|\textup{Cl}(K_n)|) = \mu(K_\infty/K) p^n + \lambda(K_\infty/K) n + \nu(K_\infty/K). \end{align} 
Here $\mu(K_\infty/K) = \mu(Y(K_\infty))$ and $\lambda(K_\infty/K) = \lambda(Y(K_\infty))$ are the Iwasawa invariants of the $\Lambda$-module ${Y(K_\infty) = \varprojlim_n Y(K_n)}$, $\Lambda = \Z_p\llbracket T\rrbracket $, as defined in the previous section. 

\subsection{Multiple $\Z_p$-extensions} 
More generally, if $K_\infty/K$ is a $\Z_p^l$-extension, $l \ge 1$, then we have the following asymptotic formula due to Cuoco and Monsky (see \cite{cuoco-monsky}): 
\begin{align} \label{eq:cuoco-monsky} v_p(|\textup{Cl}(K_n)|) = m_0(Y(K_\infty)) p^{ln} + l_0(Y(K_\infty)) n p^{(l-1)n} + \Ok(p^{(l-1)n}), \end{align}  
where $m_0(Y(K_\infty)), l_0(Y(K_\infty)) \in \N$ are the so-called \emph{generalised Iwasawa invariants} of the $\Z_p\llbracket \Gal(K_\infty/K)\rrbracket $-module $Y(K_\infty)$, as defined in the previous section.  

Later Monsky refined the above formula and proved that 
\[ v_p(|\textup{Cl}(K_n)|) = m_0(Y(K_\infty)) p^{ln} + l_0(Y(K_\infty)) n p^{(l-1)n} + \alpha p^{(l-1)n} + \Ok(n p^{(l-2)n}), \] 
where, however, $\alpha \in \R$ does not need to be rational. 

In \cite{local_max}, the first author has studied $\Z_p^2$-extensions $K_\infty/K$ with the easiest possible ramification behaviour: exactly one prime of $K$ ramifies in $K_\infty$, and it is totally ramified. In this case, we have 
\[ v_p(|\textup{Cl}(K_n)|) = m_0(Y(K_\infty)) p^{2n} + l_0(Y(K_\infty)) n p^n + a p^n + b + c + C_n \] 
for suitable integers $a,b,c$ and every sufficiently large $n$, where $(C_n)_{n \in \N}$ is a sequence of integers attached to the maximal pseudo-null $\Z_p\llbracket \Gal(K_\infty/K)\rrbracket $-\-sub\-mod\-ule $Y(K_\infty)^\circ$ of $Y(K_\infty)$. Moreover, if $Y(K_\infty)^\circ$ is annihilated by an element $\gamma - 1$, $\gamma \in \Gal(K_\infty/K) \setminus \{1\}$, then the $C_n$ may be absorbed into the formula, i.e. then there exist integers $a',b',c'$ such that 
\[ v_p(|\textup{Cl}(K_n)|) = m_0(Y(K_\infty)) p^{2n} + l_0(Y(K_\infty)) n p^n + a' p^n + b' + c' \]
for each sufficiently large $n$ (see \cite[Theorem~5.4]{local_max}). 

\subsection{Exact formulas in the non-commutative setting} 
Over the last years, also several results for non-commutative $p$-adic Lie extensions of number fields have been obtained; we mention just some of them. 

In \cite{lei_classnumbers}, Lei considered certain $\Z_p^{l-1} \rtimes \Z_p$-extensions $K_\infty/K$, $p \ne 2$. In this situation, write ${G = \Gal(K_\infty/K) \cong H \rtimes \Gamma}$, and let $K_{m,n} = K_\infty^{H_m \rtimes \Gamma_n}$, where ${H_m = H^{p^m}}$ and ${\Gamma_n = \Gamma^{p^n}}$, $m, n \in \N$. Under a strict ramification hypothesis (in particular, $K$ shall contain exactly one prime above $p$, and this prime shall be totally ramified in $K_\infty$), Lei proved that for a fixed $n \in \N$, there exist integers $\mu_n, \lambda_n$ such that 
\[ v_p(|\textup{Cl}(K_{m,n})|) = \mu_n p^{(l-1)m} + \lambda_n m p^{(l-2)m} + \Ok(p^{(l-2)m}). \] 
Moreover, if $Y(K_\infty)$ is finitely generated over $\Z_p\llbracket H\rrbracket $ and $l = 2$, then 
\begin{align} \label{eq:lei1} 
   v_p(|\textup{Cl}(K_{n,n})|) = \rg_{\Z_p\llbracket H\rrbracket }(Y(K_\infty))n p^n + \Ok(p^n). 
\end{align} 
In joint work with Liang, Lim generalised Lei's estimate~\eqref{eq:lei1} to ${\Z_p^{l-1} \rtimes \Z_p}$-extensions for arbitrary $l \ge 2$, under the same strict ramification hypotheses (see \cite{Lim-Liang}). 

Finally, under the hypothesis that ${Y_f(K_\infty)) := Y(K_\infty)/Y(K_\infty)[p^\infty]}$ is finitely generated over $\Z_p\llbracket H\rrbracket $, and $l = 2$, Lei obtained upper bounds 
\begin{align} \label{eq:lei2} 
  \tilde{r}^n_p(\textup{Cl}(K_{n,n})) = \mu(Y(K_\infty))p^{2n} + \rg_{\Z_p\llbracket H\rrbracket }(Y_f(K_\infty)) n p^n + \Ok(p^n). 
\end{align} 
Here the notation $\tilde{r}^n_p$ means the following: for any finitely generated abelian group $N'$ we define $\tilde{r}^n_p(N')$ by  \[|N'/p^n N'| = p^{\tilde{r}(N')}.\] 

It is not uncommon that the above formula is for the cardinality of the quotient $\textup{Cl}(K_{n,n})/p^n \textup{Cl}(K_{n,n})$ and not for $\textup{Cl}(K_{n,n})$ itself. 
In the last part of this section, we mention more such results. 

\subsection{Estimates for quotients of ideal class groups} 
The first such result seems to be a result due to Perbet (see \cite[Théorème~0.1]{perbet}, and cf. also Theorem~\ref{thm:perbet+} below): let $K_\infty/K$ be any uniform pro-$p$-extension of dimension $l$ with Galois group $G$ and intermediate fields $K_n = K_\infty^{G_n}$, $n \in \N$. Then 
\[ \tilde{r}^n_p(\textup{Cl}(K_{n,n})) = (n \cdot \rg_{\Z_p\llbracket G\rrbracket }(Y(K_\infty)) + \mu(Y(K_\infty)) p^{ln} + \Ok(n p^{(l-1)n}). \] 
Actually Perbet's theorem is formulated more generally for $S$-$T$-class groups. 

Building on the work of Perbet, Lim proved a generalisation of the estimates~\eqref{eq:lei2} of Lei (see \cite{Lim-classnumbers}): let $K_\infty/K$ be a uniform pro-$p$-extension of dimension $l$, $p \ne 2$, and suppose that $G = \Gal(K_\infty/K)$ contains a closed normal subgroup $H$ such that $G/H \cong \Z_p$. If $Y_f(K_\infty)$ (defined as above) is finitely generated over $\Z_p\llbracket H\rrbracket $, then 
\[ \tilde{r}^n_p(\textup{Cl}(K_n))\le \mu(Y(K_\infty)) p^{ln} + \rg_{\Z_p\llbracket H\rrbracket }(Y_f(K_\infty))n p^{(l-1)n} + \Ok(p^{(l-1)n}). \] 
We note that $\rg_{\Z_p\llbracket G\rrbracket }(Y(K_\infty)) = 0$ in this situation, since $Y_f(K_\infty)$ is finitely generated over $\Z_p\llbracket H\rrbracket $. 

Under the stronger assumption that $Y(K_\infty)$ itself is finitely generated over $\Z_p\llbracket H\rrbracket $, Lim proved in \cite{Lim-classnumbers} that 
\[ \tilde{r}^n_p(\textup{Cl}(K_n)) \le \rg_{\Z_p\llbracket H\rrbracket }(Y(K_\infty)) n p^{(l-1)n} + \mu_{\Z_p\llbracket H\rrbracket }(Y(K_\infty))p^{(l-1)n} + \Ok(n p^{(l-2)n}). \] 
We remark that Lim did not need any extra ramification hypothesis on $K_\infty/K$ in order to prove the above results.

\section{Auxiliary results} \label{section:5} 
\subsection{Ranks of groups} 
Let $k$ be a positive integer and $N$ a $\Z_p$-module. We define the $p^k$-rank of $N$, denoted by $r_p^k(N)$, as the natural number $r$ such that $\vert N[p^k]\vert =p^r$ -- whenever this quantity is finite. If $k=1$ this coincides with the well-known notion of $p$-ranks. For a finitely generated abelian group $N'$ we also recall from the previous section the definition of $\tilde{r}_p^k(N')$: 
\[ p^{\tilde{r}_p^k(N')}=\vert N'/p^k N'\vert. \] 
If $N$ is finite, then $r_p^k(N)=\tilde{r}_p^k(N)$. If $N$ is cofinitely generated, then $$ r_p^k(N)=\tilde{r}_p^k(N^\vee). $$  
\begin{lemma} 
\label{lem:ranksunderepimorphism}Let $Z$ and $Z'$ be cofinitely generated abelian groups.
If $Z'\subseteq Z$, then $r_p^k(Z')\le r_p^k(Z)$. If $Z\longrightarrow Z'$ is an epimorphism, then $r_p^k(Z')\le r_p^k(Z)$.
\end{lemma}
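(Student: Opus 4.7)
The statement breaks into two independent inequalities. For the first (subgroup) inequality I would simply observe that if $Z'\subseteq Z$ then $Z'[p^k]=Z'\cap Z[p^k]\subseteq Z[p^k]$; since $Z$ is cofinitely generated, $Z[p^k]$ is finite, so $|Z'[p^k]|\le|Z[p^k]|$ and hence $r_p^k(Z')\le r_p^k(Z)$.

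For the second (epimorphism) inequality the plan is to pass to Pontryagin duals. A surjection $\varphi\colon Z\twoheadrightarrow Z'$ of cofinitely generated $\Z_p$-modules dualises to an injection $\varphi^\vee\colon (Z')^\vee\hookrightarrow Z^\vee$ of finitely generated $\Z_p$-modules. Using the identity $r_p^k(N)=\tilde r_p^k(N^\vee)$ recorded at the start of the section, it is enough to prove the following auxiliary statement: \emph{if $A\hookrightarrow B$ is an injection of finitely generated $\Z_p$-modules, then $|A/p^k A|\le|B/p^k B|$.}

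To establish this auxiliary statement I would set $C=B/A$ and apply the snake lemma to the short exact sequence $0\to A\to B\to C\to 0$ with vertical multiplication-by-$p^k$ maps. This produces the six-term exact sequence
\[
0\to A[p^k]\to B[p^k]\to C[p^k]\to A/p^k A\to B/p^k B\to C/p^k C\to 0,
\]
all of whose terms are finite. Taking the alternating product of orders yields
\[
\frac{|A/p^k A|}{|B/p^k B|} \;=\; \frac{|A[p^k]|}{|B[p^k]|}\cdot\frac{|C[p^k]|}{|C/p^k C|}.
\]
The first factor is at most $1$ because $A[p^k]\subseteq B[p^k]$. For the second factor I would invoke the elementary divisor decomposition $C\cong\Z_p^r\oplus\bigoplus_i\Z/p^{c_i}\Z$: a direct computation gives $|C[p^k]|=p^{\sum_i\min(k,c_i)}$ and $|C/p^k C|=p^{kr+\sum_i\min(k,c_i)}$, so $|C[p^k]|\le|C/p^k C|$. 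Combining, $|A/p^k A|\le|B/p^k B|$, which yields the auxiliary statement and hence the lemma via duality.

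I do not anticipate any serious obstacle: the whole argument is a short piece of homological bookkeeping, and the only non-tautological ingredient is the elementary inequality $|C[p^k]|\le|C/p^k C|$ for finitely generated $\Z_p$-modules. The reason for detouring through Pontryagin duality is that the dual inequality $|M[p^k]|\ge|M/p^k M|$ holds for cofinitely generated $M$, so one cannot run the snake-lemma cardinality computation directly on $Z\twoheadrightarrow Z'$; dualising is precisely what turns the direction of this elementary estimate around so that the bookkeeping goes through.
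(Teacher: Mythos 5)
Your proof is correct, but it takes a slightly different route than the paper. The paper works directly with the exact sequence $0\to\ker\phi\to Z\to Z'\to 0$ of cofinitely generated groups: applying the snake lemma (with vertical multiplication by $p^k$) but truncating after the connecting map gives a four-term sequence $0\to\ker\phi[p^k]\to Z[p^k]\to Z'[p^k]\to V\to 0$ with $V\subseteq\ker\phi/p^k\ker\phi$, and then the single inequality $|\ker\phi/p^k\ker\phi|\le|\ker\phi[p^k]|$ (valid for cofinitely generated groups) closes the argument via the alternating sum of $p$-valuations. You instead dualize first, convert the problem to one about finitely generated $\Z_p$-modules, run the \emph{full} six-term snake sequence, and use the dual inequality $|C[p^k]|\le|C/p^kC|$. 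Both proofs hinge on exactly the same asymmetry between $p^k$-torsion and $p^k$-coinvariants — just phrased on opposite sides of Pontryagin duality — so the mathematical content is the same, but the paper's version is a step shorter since it never dualizes. Your closing remark that ``one cannot run the snake-lemma cardinality computation directly on $Z\twoheadrightarrow Z'$'' is true only for the full six-term alternating product; the truncated four-term sequence, which is what the paper uses, does run directly and sidesteps the issue you flag.
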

\begin{proof}
The first claim is obvious from the inclusion $Z'[p^k]\subseteq Z[p^k]$. Assume now that $\phi\colon Z\longrightarrow Z'$ is a surjective homomorphism. Then we obtain an exact sequences
\[0\longrightarrow \ker(\phi)\longrightarrow Z\longrightarrow Z'\longrightarrow 0\] and
\[0\longrightarrow \ker(\phi)[p^k]\longrightarrow Z[p^k]\longrightarrow Z'[p^k] \longrightarrow V\longrightarrow 0\]
for a suitable subgroup $V$ of $\ker(\phi)/p^k\ker(\phi)$. It follows that 
\[r_p^k(Z')=r_p^k(V)+r_p^k(Z)-r_p^k(\ker(\phi)).\]
$\ker(\phi)\subseteq Z$ is cofinitely generated. Therefore $r_p^k(\ker(\phi)/p^k\ker(\phi))\le r_p^k(\ker(\phi))$. In particular, $r_p^k(V)\le r_p^k(\ker(\phi))$ and hence $r_p^k(Z')\le r_p^k(Z)$.
\end{proof}

\begin{lemma}
\label{lemma-ranks}
Let $W,X,Y,Z$ be cofinitely generated abelian groups related by the exact sequence
\[W\longrightarrow X\longrightarrow Y\longrightarrow Z.\]
Then we have
\[\vert r_p^k(X)-r^k_p(Y)\vert \le r_p^k(W)+r_p^k(Z). \]
\end{lemma}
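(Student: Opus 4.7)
The plan is to factor the four-term exact sequence through the image $I = \Image(X \to Y)$, reducing the problem to two short exact sequences to which I can apply the techniques from Lemma~\ref{lem:ranksunderepimorphism}. More precisely, let $K = \ker(X \to Y)$ and $J = \Image(Y \to Z)$, so that by exactness there are short exact sequences
\[ 0 \longrightarrow K \longrightarrow X \longrightarrow I \longrightarrow 0 \qquad \text{and} \qquad 0 \longrightarrow I \longrightarrow Y \longrightarrow J \longrightarrow 0. \]
Exactness at $X$ tells us that $K$ is a quotient of $W$, so Lemma~\ref{lem:ranksunderepimorphism} gives $r_p^k(K) \le r_p^k(W)$; similarly $J \subseteq Z$ yields $r_p^k(J) \le r_p^k(Z)$. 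It therefore suffices to prove
\[ |r_p^k(X) - r_p^k(I)| \le r_p^k(K) \qquad \text{and} \qquad |r_p^k(I) - r_p^k(Y)| \le r_p^k(J), \]
for then the triangle inequality yields the claim.

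For the first inequality I would reuse the computation carried out in the proof of Lemma~\ref{lem:ranksunderepimorphism}: the $p^k$-torsion sequence associated with $0 \to K \to X \to I \to 0$ reads
\[ 0 \longrightarrow K[p^k] \longrightarrow X[p^k] \longrightarrow I[p^k] \longrightarrow V \longrightarrow 0 \]
for a subgroup $V \subseteq K/p^k K$, which gives $r_p^k(I) - r_p^k(X) = r_p^k(V) - r_p^k(K)$. Since $K$ is cofinitely generated and $V \subseteq K/p^k K$, one has $r_p^k(V) \le r_p^k(K)$, hence $|r_p^k(X) - r_p^k(I)| \le r_p^k(K)$ (one direction is trivial, since $I$ is a quotient of $X$).

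For the second inequality I would instead form the six-term snake sequence associated with $0 \to I \to Y \to J \to 0$ and multiplication by $p^k$:
\[ 0 \longrightarrow I[p^k] \longrightarrow Y[p^k] \longrightarrow J[p^k] \longrightarrow I/p^kI \longrightarrow Y/p^kY \longrightarrow J/p^kJ \longrightarrow 0. \]
This gives $r_p^k(Y) - r_p^k(I) \le r_p^k(J[p^k]) \le r_p^k(J)$, while the reverse inequality $r_p^k(I) \le r_p^k(Y)$ is immediate from $I \subseteq Y$.

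The argument is essentially bookkeeping, so I do not expect a genuine obstacle; the only mild subtlety is that the two halves of the decomposition are handled by slightly different devices (a quotient computation on one side, the snake sequence on the other), and one must remember that cofinite generation is needed to ensure $r_p^k(V) \le r_p^k(K)$ in the first step.
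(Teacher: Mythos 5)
Your proposal is correct and follows essentially the same route as the paper: both use Lemma~\ref{lem:ranksunderepimorphism} to replace $W$ by $\ker(X\to Y)$ and $Z$ by $\Image(Y\to Z)$, split the resulting five-term exact sequence into two short exact sequences at the image $I$ of $X\to Y$, and apply the $p^k$-torsion functor to each. The only cosmetic difference is that the paper's proof combines the two torsion sequences into the single equation $r_p^k(X)-r_p^k(Y)=r_p^k(W)-r_p^k(Q)-r_p^k(P)$ and reads off the bound directly, whereas you bound the two halves separately and invoke the triangle inequality.
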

\begin{proof}
For $k=1$ this is \cite[Lemma 3.2]{lim-murty}. Note that we can replace $W$ by $\ker (X\longrightarrow Y)$ and $Z$ by $\textup{im}(Y\longrightarrow Z)$ because clearly, $r_p^k(\ker (X\longrightarrow Y))\le r^k_p(W)$ and $r_p^k(\textup{im}(Y\longrightarrow Z))\le r_p^k(Z)$ (see also Lemma \ref{lem:ranksunderepimorphism}). So we can assume that the sequence
\[0\longrightarrow W\longrightarrow X\longrightarrow Y\longrightarrow Z\longrightarrow 0\]
is exact. 
We can break up this exact sequence into two further sequences
\[0\longrightarrow W \stackrel{\iota}{\longrightarrow} X\longrightarrow C\longrightarrow 0, \]
\[0\longrightarrow C\longrightarrow Y\longrightarrow Z\longrightarrow 0.\]
Restricting to the $p^k$-torsion we obtain
\[0\longrightarrow W[p^k]\longrightarrow X[p^k]\longrightarrow C[p^k]\longrightarrow P\longrightarrow 0\]
for a suitable group $P\subseteq W/p^kW$. 
As $W$ is cofinitely generated, it follows that $r_p^k(P)\le r_p^k(W)$. 
We obtain a second exact sequence
\[0\longrightarrow C[p^k]\longrightarrow Y[p^k]\longrightarrow Q\longrightarrow 0\]
where $Q\subseteq Z[p^k]$. Comparing these two exact sequences yields  
\[r_p^k(X)-r_p^k(Y)=r_p^k(W)-r_p^k(Q)-r_p^k(P).\]
From this the claim is immediate.
\end{proof}

Let $L$ be a number field which contains $K$, and let $\Sigma$ be a finite set of primes of $K$. Recall the definition of $\textup{Cl}(L)$ and $\textup{Cl}_\Sigma(L)$ from Section~\ref{subsection:classnumberformulas}. 
\begin{lemma} \label{lemma:vergleich_sigma_cl} 
Let $\Sigma$ be a finite set of primes of $K$ and let $L$ be a finite extension of $K$. Then
\[\vert r_p^k(\textup{Cl}_\Sigma(L))-r_p^k(\textup{Cl}(L))\vert \le k |\Sigma(L)|, \] 
where $\Sigma(L)$ is the set of primes in $L$ above $\Sigma$.
\end{lemma}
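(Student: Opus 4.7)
The plan is to reduce the problem to a short exact sequence relating $\textup{Cl}(L)$ and $\textup{Cl}_\Sigma(L)$ and then to apply Lemma~\ref{lemma-ranks}. Let $S \subseteq \textup{Cl}(L)$ be the subgroup generated by the ideal classes of the primes in $\Sigma(L)$. By the very definition of $\textup{Cl}_\Sigma(L)$, there is a short exact sequence
\[ 0 \longrightarrow S \longrightarrow \textup{Cl}(L) \longrightarrow \textup{Cl}_\Sigma(L) \longrightarrow 0. \]
Since $\textup{Cl}(L)$ is a finite abelian group, so are all of $S$, $\textup{Cl}(L)$ and $\textup{Cl}_\Sigma(L)$, and thus everything in sight is cofinitely generated (indeed finite).

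Next, I would apply Lemma~\ref{lemma-ranks} to the above short exact sequence, viewed as the four-term exact sequence $S \to \textup{Cl}(L) \to \textup{Cl}_\Sigma(L) \to 0$. This yields
\[ \bigl| r_p^k(\textup{Cl}(L)) - r_p^k(\textup{Cl}_\Sigma(L)) \bigr| \le r_p^k(S) + r_p^k(0) = r_p^k(S). \]
It therefore suffices to show that $r_p^k(S) \le k|\Sigma(L)|$.

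Finally I would bound $r_p^k(S)$ by counting. By construction $S$ is generated, as an abelian group, by the (at most) $|\Sigma(L)|$ classes of the primes in $\Sigma(L)$. Hence there is a surjection $\Z^{|\Sigma(L)|} \twoheadrightarrow S$, and since $S$ is finite, the invariant factor decomposition writes $S$ as a direct sum of at most $|\Sigma(L)|$ cyclic groups. For each cyclic factor $\Z/d\Z$, the $p^k$-torsion subgroup has order $\gcd(p^k,d) \le p^k$, so
\[ |S[p^k]| \le p^{k|\Sigma(L)|}, \qquad \text{i.e.} \qquad r_p^k(S) \le k|\Sigma(L)|, \]
which together with the previous bound gives the desired inequality. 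The proof is essentially a short bookkeeping exercise given Lemma~\ref{lemma-ranks}; there is no real obstacle, the only point requiring any care being the elementary bound on $r_p^k(S)$ in terms of the number of generators.
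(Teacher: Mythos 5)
Your proof is correct and follows essentially the same route as the paper's: both isolate the kernel of $\textup{Cl}(L)\twoheadrightarrow\textup{Cl}_\Sigma(L)$ (your $S$, the paper's $C_\Sigma$), observe it is a finite abelian group generated by at most $|\Sigma(L)|$ elements so that $r_p^k$ of it is at most $k|\Sigma(L)|$, and then apply Lemma~\ref{lemma-ranks} with $Z=0$. The only difference is that you spell out the invariant-factor argument for the bound on $r_p^k(S)$, which the paper states without elaboration.
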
 
\begin{proof}
For $k=1$ this is \cite[Lemma 5.2]{lim-murty}.
Consider the exact sequence
\[\Z^{|\Sigma(L)|}\longrightarrow \textup{Cl}(L)\longrightarrow \textup{Cl}_\Sigma(L)\longrightarrow 0.\]
We denote the kernel of the natural projection $\textup{Cl}(L)\longrightarrow \textup{Cl}_\Sigma(L)$ by $C_\Sigma$. Then $C_\Sigma$ is finite and $r_p^k(C_\Sigma)\le k|\Sigma(L)|$. We obtain a short exact sequence of finite abelian groups
\[0\longrightarrow C_\Sigma\longrightarrow \textup{Cl}(L)\longrightarrow \textup{Cl}_\Sigma(L)\longrightarrow 0.\]
Now we apply Lemma \ref{lemma-ranks} with $Z=0$.
\end{proof}
The following lemma, which compares the $p^k$-rank of the fine Selmer group with the $p$-rank of a $p^k$-fine Selmer group, will be needed in Section~\ref{section:main}. 
\begin{lemma} \label{lemma:lim-murty} 
Let $A$ be an abelian variety of dimension $d$ defined over $K$, and let $\Sigma$ be a finite set of primes of $K$ containing $\Sigma_p$ and $\Sigma_{\textup{br}}(A)$. If $p = 2$, then we assume that $K$ is totally imaginary. Let $L/K$ be a finite extension which is contained in $K_\Sigma$. Then 
		\[\vert v_p(|\textup{Sel}_{0,A}(L))[p^k]|)-v_p(|\textup{Sel}_{0,A[p^k],\Sigma}(L)|)\vert \le 2dk(1+|\Sigma(L)|) \] 
		for each integer $k \ge 1$, where $\Sigma(L)$ denotes the set of primes of $L$ above $\Sigma$. 
\end{lemma}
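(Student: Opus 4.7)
The plan is to compare the two Selmer groups through the Kummer-type short exact sequence of $G_L$-modules
\[
0 \to A[p^k] \to A[p^\infty] \xrightarrow{p^k} A[p^\infty] \to 0.
\]
Taking $\Gal(K_\Sigma/L)$-cohomology produces a short exact sequence
\[
0 \to A(L)[p^\infty]/p^k A(L)[p^\infty] \to H^1(K_\Sigma/L, A[p^k]) \to H^1(K_\Sigma/L, A[p^\infty])[p^k] \to 0,
\]
and an analogous sequence at each completion $L_w$ with $w \in \Sigma(L)$. Under the hypothesis that $K$ (hence $L$) is totally imaginary when $p=2$, all archimedean places of $L$ are complex, so their local cohomology vanishes and they may be ignored.

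Fitting these into a commutative diagram with vertical localisation maps and applying the snake lemma yields a four-term exact sequence
\[
\Ker(\alpha) \longrightarrow \Sel_{0,A[p^k],\Sigma}(L) \longrightarrow \Sel_{0,A,\Sigma}(L)[p^k] \longrightarrow \coker(\alpha),
\]
where $\alpha \colon A(L)[p^\infty]/p^k \to \prod_{w \in \Sigma(L)} A(L_w)[p^\infty]/p^k$ is the natural localisation map. The identification of the middle terms implicitly uses that $\Sel_{0,A,\Sigma}(L)[p^k]$ coincides with the kernel of $H^1(K_\Sigma/L, A[p^\infty])[p^k] \to \prod_w H^1(L_w, A[p^\infty])[p^k]$, which is immediate from the naturality of the Kummer sequence. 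Applying Lemma~\ref{lemma-ranks} to the four-term sequence then gives
\[
\bigl| r_p^k(\Sel_{0,A[p^k],\Sigma}(L)) - r_p^k(\Sel_{0,A,\Sigma}(L)[p^k]) \bigr| \le r_p^k(\Ker(\alpha)) + r_p^k(\coker(\alpha)).
\]

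Bounding the error terms is straightforward. Since $A(L)$ is finitely generated over $\Z$, the group $A(L)[p^\infty]$ is a finite abelian $p$-group, and $A(L)[p^\infty][p^k] \hookrightarrow A[p^k] \cong (\Z/p^k\Z)^{2d}$. Thus $|A(L)[p^\infty]/p^k A(L)[p^\infty]| = |A(L)[p^\infty][p^k]| \le p^{2dk}$, giving $r_p^k(\Ker(\alpha)) \le 2dk$. An identical argument at each non-archimedean $w \in \Sigma(L)$, using that $A(L_w)$ is compact and hence $A(L_w)[p^\infty]$ is finite, produces $r_p^k(\coker(\alpha)) \le 2dk|\Sigma(L)|$. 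As both $\Sel_{0,A[p^k],\Sigma}(L)$ and $\Sel_{0,A,\Sigma}(L)[p^k]$ are $p^k$-torsion and finite, their $r_p^k$'s equal the $p$-adic valuations of their orders, and the claimed inequality follows.

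The main obstacle I anticipate is the careful bookkeeping around the snake-lemma sequence and the verification that all groups involved are cofinitely generated so that Lemma~\ref{lemma-ranks} applies; beyond this, the argument is a direct generalisation of the $k=1$ case treated in \cite[Lemma 5.2]{lim-murty}.
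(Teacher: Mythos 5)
Your proof is correct and uses the natural Kummer-sequence/snake-lemma argument. The paper gives no argument of its own here — it simply cites \cite[Lemma~3.1]{KM21}, of which this lemma is a special instance — so your blind reconstruction is in effect a proof of the fact the authors are outsourcing, and the method you use is the standard one. Two small points worth tightening: the identification $\ker\gamma = \Sel_{0,A,\Sigma}(L)[p^k]$ is just the elementary observation that a homomorphism restricted to a subgroup has kernel equal to the intersection of the original kernel with that subgroup (no appeal to naturality of the Kummer sequence is needed); and the finiteness of $A(L_w)[p^\infty]$ for non-archimedean $w$ should be justified by the local structure theorem for abelian varieties over $\ell$-adic fields (an open subgroup of finite index is torsion-free), since compactness alone does not give it. Your final bibliographic remark citing the $k=1$ case as \cite[Lemma~5.2]{lim-murty} should also be double-checked, as the paper cites that same lemma as the $k=1$ case of the class-group comparison (the paper's Lemma~\ref{lemma:vergleich_sigma_cl}) rather than of the Selmer-group comparison.
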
 
\begin{proof} 
  This is a special instance of \cite[Lemma~3.1]{KM21}. 
\end{proof}

\subsection{Iwasawa modules} 
In this section we prove auxiliary results on the asymptotic growth of Iwasawa modules over uniform $p$-extensions.  
We start with a slight modification of the result \cite[Theorem~2.1(ii)]{perbet} of Perbet.\footnote{Actually Perbet considered, more generally, pro-$p$-groups which are \emph{$p$-valued}. This includes the uniform $p$-groups (see \cite[p.~81]{Dixon}).} By \cite[Theorem~3.40]{venjakob}, for a uniform group $G$, one can attach to each finitely generated $\Z_p\llbracket G\rrbracket $-module $M$ an elementary $\Z_p\llbracket G \rrbracket $-module ${E = \bigoplus_{i = 1}^s \Z_p\llbracket G \rrbracket /(p^{e_i})}$ which is pseudo-isomorphic to $M[p^\infty]$. In particular, ${\mu_{\Z_p\llbracket G \rrbracket }(M) = \sum_{i=1}^s e_i}$.  
\begin{thm} \label{thm:perbet+}
   Let $K_\infty/K$ be a $p$-adic Lie extension with Galois group $G$. We assume that $G$ is uniform of dimension $l$. Let $M$ be a finitely generated $\Z_p\llbracket G\rrbracket $-module, and let $k \in \N$. Let 
   $E = \bigoplus_{i = 1}^s \Z_p\llbracket G\rrbracket /(p^{e_i})$ be the elementary $\Z_p\llbracket G\rrbracket $-module attached to $M[p^\infty]$ as above. 
   
   Then 
   \[ | v_p(|M_{G_n}/p^k M_{G_n}|) - (r \cdot k + \mu^{(k)}(M)) \cdot p^{ln}| = \Ok(k p^{n (l-1)}),  \] 
   where $\mu^{(k)}(M) = \sum_{i=1}^s \min(k, e_i)$ and $r = \rg_{\Z_p\llbracket G\rrbracket }(M)$. Here the implicit $\Ok$-constant does not depend on $k$ or $n$. 
\end{thm}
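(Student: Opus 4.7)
The plan is to follow Perbet's proof of \cite[Theorem~2.1(ii)]{perbet}, keeping careful track of how the auxiliary parameter $k$ enters the estimates in a linear fashion rather than specialising to $k=n$. First, I would apply Venjakob's structure theorem to obtain a pseudo-isomorphism $\varphi\colon M \to E_M = \Z_p[[G]]^r \oplus E \oplus E'$, where $E = \bigoplus_{i=1}^s \Z_p[[G]]/(p^{e_i})$ is the stated elementary module and $E'$ is a torsion module whose annihilating elements are coprime to $p$; both $\ker(\varphi)$ and $\coker(\varphi)$ are pseudo-null over $\Z_p[[G]]$. Splitting the four-term sequence $0 \to \ker(\varphi) \to M \to E_M \to \coker(\varphi) \to 0$ into two short exact sequences and taking $G_n$-coinvariants modulo $p^k$, one applies Lemma~\ref{lemma-ranks} to reduce matters to (a) an exact computation of $v_p(|(\Z_p[[G]]^r \oplus E)_{G_n}/p^k|)$ and (b) error bounds of size $\Ok(k p^{(l-1)n})$ for the analogous quantities attached to $E'$ and to the pseudo-null kernel and cokernel.

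For part (a), I would use that $(\Z_p[[G]])_{G_n} = \Z_p[G/G_n]$ is $\Z_p$-free of rank $p^{ln}$, so the free summand contributes $rk \cdot p^{ln}$ on the nose. Each cyclic $p$-power summand yields $(\Z_p[[G]]/(p^{e_i}))_{G_n} = (\Z/p^{e_i}\Z)[G/G_n]$, whose $p^k$-quotient has $p$-valuation $\min(k,e_i) \cdot p^{ln}$; summing over $i$ gives exactly $\mu^{(k)}(M) \cdot p^{ln}$, matching the main term of the theorem.

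For part (b), the key input is Perbet's growth bound: for any pseudo-null $\Z_p[[G]]$-module $N$, the coinvariants $N_{G_n}$ are finitely generated over $\Z_p$ with both $\Z_p$-rank and torsion part of $p$-valuation bounded by $\Ok(p^{(l-1)n})$. The trivial estimate $v_p(|N_{G_n}/p^k|) \le k \cdot \rg_{\Z_p}(N_{G_n}) + v_p(|N_{G_n}[p^\infty]|)$ then gives $\Ok(k p^{(l-1)n})$. The summand $E'$ is treated analogously: a cyclic piece $\Z_p[[G]]/(f_j)$ with $f_j$ coprime to $p$ has coinvariants $\Z_p[G/G_n]/(\bar f_j)$, which is finitely generated over $\Z_p$ of rank $\Ok(p^{(l-1)n})$ because $\bar f_j$ is a non-zero element of the $l$-dimensional $\F_p$-algebra $\F_p[G/G_n]$.

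The main obstacle will be upgrading Perbet's pseudo-null growth bound $v_p(|N_{G_n}|) = \Ok(p^{(l-1)n})$ into the refinement just used, namely that the $\Z_p$-rank and the order of $N_{G_n}[p^\infty]$ are separately $\Ok(p^{(l-1)n})$ with implicit constants independent of $k$. This is the quantitative heart of the argument and is obtained by filtering a pseudo-null module by submodules whose coinvariants reduce to the lower-dimensional setting (with $l$ replaced by $l-1$), then iterating part (a) in that reduced dimension. Once this refinement is in place, combining (a) and (b) through Lemma~\ref{lemma-ranks} yields the stated asymptotic formula, with the implicit $\Ok$-constant depending only on $M$ and $G$.
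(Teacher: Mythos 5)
Your overall strategy—track the dependence on $k$ linearly through Perbet's argument rather than setting $k=n$—is the right idea and matches the paper's intent. But the specific route you take opens a genuine gap that the paper is careful to avoid.

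You propose a pseudo-isomorphism $M \to \Z_p[[G]]^r \oplus E \oplus E'$ with $E'$ a torsion module whose annihilating elements are coprime to $p$, and you later treat $E'$ as a direct sum of cyclic modules $\Z_p[[G]]/(f_j)$. For non-abelian uniform $G$, no such decomposition exists. Venjakob's Theorem~3.40 (which the statement cites) gives an elementary module $\bigoplus_i \Z_p[[G]]/(p^{e_i})$ only for the $p$-primary submodule $M[p^\infty]$, where the two-sided principal ideals $(p^{e_i})$ exist because $p$ is central; the corresponding statement for the coprime-to-$p$ torsion part would require a cyclic decomposition by principal (two-sided) ideals, which fails for non-commutative Iwasawa algebras. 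The structure results of Coates--Schneider--Sujatha (which the paper cites instead) produce reflexive left ideals, not principal ones, so your reduction to "a cyclic piece $\Z_p[[G]]/(f_j)$" does not go through. The paper sidesteps this by splitting only into $t(M)$ and $\tilde M = M/t(M)$ and invoking Perbet's Lemmes~2.8 and 2.9 for the torsion part as a whole, with no appeal to a cyclic decomposition of the $\mu=0$ part.

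There is a second, independent issue in part (b). You flag the "main obstacle" of making the pseudo-null estimate independent of $k$, and propose bounding $v_p(|N_{G_n}/p^k|)$ by $k\cdot\rg_{\Z_p}(N_{G_n}) + v_p(|N_{G_n}[p^\infty]|)$. But to get $\Ok(kp^{(l-1)n})$ from this, you would need $v_p(|N_{G_n}[p^\infty]|) = \Ok(p^{(l-1)n})$ \emph{without} an extra factor of $n$, which is not what Perbet's original $k=n$ estimate provides. The paper instead establishes the $k$-uniformity directly by filtering $N/p^k N$ by the submodules $p^j N/p^k N$ and observing that the graded pieces $p^j N/p^{j+1}N$ eventually stabilize up to isomorphism (because $(p^j N)[p^\infty]=\{0\}$ for $j\gg 0$), so the constants $C_j$ in the layer-by-layer bound are bounded independently of $k$. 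Your proposed fix ("filtering a pseudo-null module by submodules whose coinvariants reduce to the lower-dimensional setting, then iterating part (a)") is too vague to substitute for this stabilization argument. To repair the proof you should abandon the cyclic decomposition of $E'$, work with $t(M)$ and $\tilde M$ directly, and import the paper's filtration-and-stabilization step verbatim for the pseudo-null bound.
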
 
\begin{proof} 
  This result can be proved along the lines of Perbet's proof of \cite[Theorem~2.1(ii)]{perbet}. We therefore only provide a sketch of the proof. 
  
  For every finitely generated $\Z_p\llbracket G\rrbracket $-module $N$, we consider the homology groups 
  \[ H_i(G_n, N) = \textup{Tor}_i^{\Z_p\llbracket G_n\rrbracket }(\Z_p, N), \] 
  $i, n \in \N$. In particular, $H_0(G_n, N) = N_{G_n}$. 
  
  If $N$ is any finitely generated torsion $\F_p\llbracket G\rrbracket $-module (in particular, $p N = \{0\}$), then 
  \begin{align} \label{eq:perbet1} 
     \dim_{\F_p}(H_i(G_n, N)) = \Ok(p^{n(l-1)}) 
  \end{align} 
  for each $i \in \N$ by \cite[Corollaire~2.3]{perbet}. We derive from this fact the following variant of \cite[Corollaire~2.4]{perbet}: let $N$ be a finitely generated $\Z_p\llbracket G\rrbracket $-module such that $N/pN$ is a torsion $\F_p\llbracket G\rrbracket $-module (e.g. this is guaranteed if $N$ is pseudo-null over $\Z_p\llbracket G\rrbracket $, by \cite[Lemme~1.9]{perbet}). Then 
  \begin{align} \label{eq:perbet2} 
     v_p(|H_i(G_n, N/p^kN)|) = \Ok(k p^{n(l-1)})
  \end{align}  
  for each $i \ge 0$, where the implicit constant does not depend on $k$ (or $n$). 
  
  Indeed, taking $G_n$-homology of the exact sequence 
  \[ 0 \longrightarrow p^{j+1} N/ p^k N \longrightarrow p^j N / p^k N \longrightarrow p^jN / p^{j+1} N \longrightarrow 0, \] 
  $j < k$, yields inequalities 
  \[ v_p(|H_i(G_n, p^jN/p^k N)|) \stackrel{\eqref{eq:perbet1}}{\le} v_p(|H_i(G_n, p^{j+1}N/p^k N)|) + C_j \cdot p^{n(l-1)} \] 
  for some constants $C_j$ which do not depend on $n$ or $k$. Therefore 
  \[ v_p(|H_i(G_n, N/p^kN)|) \le (C_1 + \ldots + C_k) \cdot p^{n(l-1)}. \] 
  Since $(p^jN)[p^\infty] = \{0\}$ and therefore $p^jN / p^{j+1}N \cong p^{j+1}N/p^{j+2}N$ for each sufficiently large $j$, the constants $C_j$ are bounded independently of $k$ by some constant $C$, and \eqref{eq:perbet2} follows. 
  
  We deduce the following 
  \begin{prop} \label{prop:pseudo-iso} 
    Let $M$ and $N$ be two finitely generated $\Z_p\llbracket G\rrbracket $-modules which are pseudo-isomorphic. Then 
    \[ | v_p(|M_{G_n}/p^k M_{G_n}|) - v_p(|N_{G_n}/p^k N_{G_n}|)| = \Ok(k p^{n(l-1)})\] 
    for every $k \in \N$, where the implicit constant does not depend on $k$ (or $n$). 
  \end{prop} 
  \begin{proof} 
    Use the arguments from the proof of \cite[Proposition~2.5]{perbet} by replacing \cite[Corollaire~2.4]{perbet} by the estimate~\eqref{eq:perbet2} above. 
  \end{proof} 
  Now we apply the general structure theory of finitely generated $\Z_p\llbracket G\rrbracket $-modules (see \cite[Propositions~6.4 and 7.2]{coates-schneider-sujatha}). 
  Let $M$ be as in the statement of Theorem~\ref{thm:perbet+}, and let $t(M) \subseteq M$ denote the $\Z_p\llbracket G\rrbracket $-torsion submodule. We write ${\tilde{M} = M/ t(M)}$, as in \cite{perbet}. Using Proposition~\ref{prop:pseudo-iso} instead of \cite[Proposition~2.5]{perbet}, the proof of \cite[Corollaire~2.7]{perbet} implies that 
  \begin{align} \label{eq:perbet3} 
     |v_p(|M_{G_n}/p^k M_{G_n}|) - v_p(|t(M)_{G_n}/p^k t(M)_{G_n}|) - v_p(|\tilde{M}_{G_n}/p^k \tilde{M}_{G_n}|)| =  \Ok(k p^{n(l-1)}). 
  \end{align} 
  Now the proofs of \cite[Lemme~2.8 and Lemme~2.9]{perbet}  imply that 
  \begin{align} \label{eq:perbet4} 
    | v_p(|t(M)_{G_n} / p^k t(M)_{G_n}|) -  \mu^{(k)}(M) \cdot p^{ln}| = \Ok(k p^{n(l-1)}), 
  \end{align} 
  because $\mu^{(k)}(t(M)) = \mu^{(k)}(M)$, and 
  \begin{align} \label{eq:perbet5}
    | v_p(|\tilde{M}_{G_n}/p^k \tilde{M}_{G_n}|) - r k p^{ln}| = \Ok(k p^{n(l-1)}). 
  \end{align} 
  Combining the equations~\eqref{eq:perbet3}, \eqref{eq:perbet4} and \eqref{eq:perbet5} proves the theorem. 
\end{proof} 

Letting $k = n$ in the above result and noting that $\mu^{(k)}(M) = \mu(M)$ for $k \gg 0$, we recover Perbet's asymptotic growth formula: 
\begin{cor} \label{cor:perbet+} 
  Let $K_\infty/K$ be a $p$-adic Lie extension with Galois group $G$. We assume that $G$ is uniform of dimension $l$. Let $M$ be a finitely generated $\Z_p\llbracket G\rrbracket $-module. Then 
  \[ | v_p(|M_{G_n}/p^n M_{G_n}|) - (r \cdot n + \mu(M)) p^{ln}| = \Ok(n p^{(l-1)n}), \] 
  where again $r = \rg_{\Z_p\llbracket G\rrbracket }(M)$. 
\end{cor}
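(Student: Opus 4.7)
The plan is to derive Corollary~\ref{cor:perbet+} as a direct specialization of Theorem~\ref{thm:perbet+}, simply by taking $k = n$ and observing that the truncated invariant $\mu^{(k)}(M)$ stabilises to the honest $\mu$-invariant $\mu(M)$ once $k$ is large enough. All of the analytical work has already been done inside Theorem~\ref{thm:perbet+}; what remains is a short bookkeeping argument.

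First I would recall the setup: let $E = \bigoplus_{i=1}^s \Z_p[[G]]/(p^{e_i})$ be the elementary module attached to $M[p^\infty]$, so that
\[ \mu(M) = \sum_{i=1}^s e_i \quad \text{and} \quad \mu^{(k)}(M) = \sum_{i=1}^s \min(k, e_i). \]
Setting $n_0 = \max\{e_1, \ldots, e_s\}$, we have $\mu^{(k)}(M) = \mu(M)$ for every $k \ge n_0$; moreover $n_0$ depends only on $M$, not on $n$.

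Next I would apply Theorem~\ref{thm:perbet+} with $k = n$ to obtain
\[ | v_p(|M_{G_n}/p^n M_{G_n}|) - (r \cdot n + \mu^{(n)}(M)) p^{ln} | = \Ok(n p^{(l-1)n}), \]
with implicit constant independent of $n$. For $n \ge n_0$ the invariant $\mu^{(n)}(M)$ can be replaced by $\mu(M)$ without changing anything, so Corollary~\ref{cor:perbet+} is immediate on this range.

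The only thing left is to deal with the finitely many values $n < n_0$. For each such $n$, the quantity $(\mu(M) - \mu^{(n)}(M)) p^{ln}$ is bounded above by $(\sum_i e_i) \cdot p^{ln_0}$, a fixed constant depending only on $M$, and hence may be absorbed into the implicit $\Ok$-constant. This is the only \textit{obstacle}, and it is purely a matter of enlarging a constant. Combining the two ranges yields the stated estimate
\[ | v_p(|M_{G_n}/p^n M_{G_n}|) - (r \cdot n + \mu(M)) p^{ln} | = \Ok(n p^{(l-1)n}) \]
for all $n \in \N$, with $r = \rg_{\Z_p[[G]]}(M)$.
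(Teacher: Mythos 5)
Your proof is correct and follows exactly the paper's own argument: the corollary is obtained by specialising Theorem~\ref{thm:perbet+} at $k = n$ and observing that $\mu^{(k)}(M) = \mu(M)$ once $k \ge \max_i e_i$. The care you take with the finitely many initial indices $n < n_0$ is a detail the paper leaves implicit but is handled the same way in spirit.
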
 

\subsection{Class numbers} 
In this subsection we prove two auxiliary results on the growth of class numbers in multiple $\Z_p$-extensions, which might be of independent interest. We start with a quotient version of the asymptotic class number formula of Cuoco and Monsky (see also Section~\ref{subsection:classnumberformulas}), generalised to $\Sigma$-class groups. 
\begin{thm} \label{thm:ckmm} 
  Let $K_\infty/K$ be a $\Z_p^l$-extension, and let $\Sigma$ be a finite set of primes of $K$. Then 
  \[ |v_p(|Y_\Sigma(K_n)/p^{n} Y_\Sigma(K_n)|) - (m_0 p^{ln} + l_0 n p^{(l-1)n})| = \Ok(p^{(l-1)n}), \] 
  where $m_0, l_0 \in \N$ are the generalised Iwasawa invariants of the $\Lambda_l$-Iwasawa-module $Y_\Sigma(K_\infty) = \varprojlim_n Y_\Sigma(K_n)$. 
\end{thm}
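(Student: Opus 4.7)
Plan: The strategy is to bootstrap from the Cuoco--Monsky formula \eqref{eq:cuoco-monsky}, adapting it in two respects: replacing $Y(K_n)$ with the $\Sigma$-class group $Y_\Sigma(K_n)$, and replacing $|Y_\Sigma(K_n)|$ with the $p^n$-quotient $|Y_\Sigma(K_n)/p^n Y_\Sigma(K_n)|$. Both adaptations must preserve the shape of the main terms and the sharp error order $\Ok(p^{(l-1)n})$.

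The Cuoco--Monsky argument, reduced to its algebraic core, establishes the analogue
\[v_p(|M_{G_n}|) = m_0(M) p^{ln} + l_0(M) n p^{(l-1)n} + \Ok(p^{(l-1)n})\]
for any finitely generated torsion $\Lambda_l$-module $M$: one reduces via the structure theorem to an elementary module $E = \bigoplus_i \Lambda_l/(p^{e_i}) \oplus \bigoplus_j \Lambda_l/(h_j)$ with the $h_j$ coprime to $p$. The $p^{e_i}$-summands contribute $m_0 p^{ln}$, since $(\Lambda_l/(p^{e_i}))_{G_n} \cong \Z_p[G/G_n]/(p^{e_i})$, and Cuoco--Monsky's refined Galois-homology computation on the $h_j$-summands contributes $l_0 n p^{(l-1)n}$, with pseudo-null discrepancies absorbed into the error. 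Applying this formula to $M = Y_\Sigma(K_\infty)$ and using the standard control theorem for the map $Y_\Sigma(K_\infty)_{G_n} \to Y_\Sigma(K_n)$ (whose kernel and cokernel have $v_p$-size bounded by the number of primes of $K_n$ above $\Sigma$, which is $\Ok(p^{(l-1)n})$) yields
\[v_p(|Y_\Sigma(K_n)|) = m_0 p^{ln} + l_0 n p^{(l-1)n} + \Ok(p^{(l-1)n}).\]

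To pass to the $p^n$-quotient, write $v_p(|Y_\Sigma(K_n)/p^n Y_\Sigma(K_n)|) = v_p(|Y_\Sigma(K_n)|) - v_p(|p^n Y_\Sigma(K_n)|)$, so it suffices to show that $v_p(|p^n Y_\Sigma(K_n)|) = \Ok(p^{(l-1)n})$. Via the pseudo-isomorphism together with the control theorem, reduce to the analogous bound for each summand of $E_{G_n}$: for $\Lambda_l/(p^{e_i})$ with $n \ge \max_i e_i$ the coinvariants are annihilated by $p^n$; for $\Lambda_l/(h_j)$ with $h_j$ coprime to $p$, the multiplication-by-$p^n$ map on $\Lambda_l/(h_j)$ is injective, and the induced long exact sequence of $G_n$-homology identifies the $p^n$-torsion of $(\Lambda_l/(h_j))_{G_n}$ with a subquotient of $H_1(G_n, \Lambda_l/(h_j, p^n))$. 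A sharpened version of the Perbet-style estimate \eqref{eq:perbet2} --- exploiting that $\Lambda_l/(h_j, p)$ has Krull dimension $l-1$ over $\F_p[[G]]$ so that its $G_n$-homology has $\F_p$-dimension $\Ok(p^{(l-1)n})$ uniformly in $n$ --- supplies the required $\Ok(p^{(l-1)n})$ bound.

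The principal obstacle I foresee is precisely this final step: obtaining the sharp $\Ok(p^{(l-1)n})$ error rather than the weaker $\Ok(n p^{(l-1)n})$ bound that follows directly from Corollary~\ref{cor:perbet+}. The term $l_0 n p^{(l-1)n}$ in the statement is of smaller order than the main term but larger than the naive Perbet error, so the refinement via Cuoco--Monsky's specific analysis of the $h_j$-summands is indispensable; and one must verify that neither the $\Sigma$-control theorem nor the passage to the $p^n$-quotient reintroduces a factor of $n$ into the remainder.
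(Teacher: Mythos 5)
Your proposal has the right high-level shape (bootstrap from Cuoco--Monsky, then control the $p^n$-torsion remainder), but there are at least two places where the mechanism you describe genuinely breaks down, and a third where you invoke a step you cannot carry out.

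First, the naive descent in step~1 is not a control theorem. You claim the kernel of $Y_\Sigma(K_\infty)_{G_n} \to Y_\Sigma(K_n)$ has $v_p$-size $\Ok(p^{(l-1)n})$. It does not: that kernel is generated by images of decomposition/inertia subgroups in $\Gal(K_\infty/K) \cong \Z_p^l$, each of positive $\Z_p$-rank, so for $l\ge 2$ the module $Y_\Sigma(K_\infty)_{G_n}$ typically has positive $\Z_p$-rank on the order of $p^{(l-2)n}$ or more, while $Y_\Sigma(K_n)$ is finite. There is no $v_p$-bound on the kernel. This is precisely why Cuoco and Monsky introduce the $A_n'$ and $B_n$ submodules (rather than $I_n\cdot Y$), and the paper follows this route, first relating $v_p(|Y_\Sigma(K_n)|)$ to $v_p(|(Y_\Sigma(K_\infty)/A_n')[p^\infty]|)$ with error only $\Ok(n)$, then passing to $B_n$, and finally to the elementary module; the ``special prime'' case (where $r(G_n)$ is not $\Ok(p^{(l-2)n})$) gets its own separate argument via the rings $R_n = \Z_p[[X]]/(g^s, h_n)$. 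Related to this: if you instead tried to start from the class-group formula \eqref{eq:cuoco-monsky} and pass to $\Sigma$-class groups via Lemma~\ref{lemma:vergleich_sigma_cl} with $k=n$, the resulting error $n\,|\Sigma(K_n)|$ is already $\Ok(n p^{(l-1)n})$, again too large.

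Second, the ``sharpened Perbet estimate'' in step~2 is asserted, not proved, and the dévissage you describe cannot produce it. Reducing $H_1(G_n, \Lambda_l/(h_j,p^n))$ along the filtration by $p^i$ does give an $\Ok(p^{(l-1)n})$ contribution at each of the $n$ stages; summing them yields $\Ok(n p^{(l-1)n})$, which is exactly the shape of the Perbet bound \eqref{eq:perbet2} you are trying to improve on. The paper escapes this with a genuinely different input: after mapping $\Lambda_l/I_n$ into the direct sum of cyclotomic rings $\Z_p[\underline\zeta]$ via the injection $\psi_n$ of Cuoco--Monsky, the $p^n$-torsion remainder is controlled by Monsky's theorem that $v_p(F(\underline\zeta - 1))$ is bounded \emph{independently of $n$}, which forces $v_p(|p^n X_n[p^\infty]|) = \Ok(1)$ (so the sum is eventually empty). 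No dévissage-only argument gives this; one needs the explicit diagonalisation over cyclotomic rings together with Monsky's boundedness result, and separately one must treat the special primes (Lemma~\ref{lemma:ckmm3}) where the hypothesis $r(G_n)=\Ok(p^{(l-2)n})$ of Lemma~\ref{lemma:ckmm2} fails. You correctly identified that the extra factor of $n$ is the obstacle, but the tool you propose does not remove it.
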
 
\begin{proof} 
  The proof is based on the proof of formula~\eqref{eq:cuoco-monsky} given in \cite{cuoco-monsky}; we will heavily use some of the notation from that article. In particular, following Cuoco and Monsky, we let $r(M)$ denote the rank of a finitely generated $\Z_p$-module $M$, and we let $e(M) = v_p(|M[p^\infty]|)$. On the other hand, in order to avoid confusion with the subgroups ${G_n \subseteq G}$ of uniform groups $G$, we have to modify parts of the notation from \cite{cuoco-monsky}. For any finitely generated and torsion $\Lambda_l$-module, we let ${M_{I_n} := M/(I_n \cdot M)}$, where $I_n \subseteq \Lambda_l$ is the ideal generated by $\omega_n(T_1), \ldots, \omega_n(T_l)$, $\omega_n(T_i) := (T_i+1)^{p^n}-1$. More generally, we let ${J_n \supseteq I_n \cdot M}$ denote the submodule of $M$ which is denoted by $A_n'$ in \cite{cuoco-monsky}; the quotient ${M_{J_n} := M/J_n}$ is called $G_n$ in \cite{cuoco-monsky}.  
  
  We start with three auxiliary lemmata. 
  \begin{lemma} \label{lemma:ckmm1} 
     Let $M$ and $M'$ be two finitely generated and torsion $\Lambda_l$-modules, and suppose that $M$ and $M'$ are pseudo-isomorphic. Then 
     \[ |v_p(|M_{J_n}[p^\infty]/p^{n} M_{J_n}[p^\infty]|) - v_p(|(M')_{J_n}[p^\infty]/p^{n} (M')_{J_n}[p^\infty]|) = \Ok(p^{(l-1)n}). \]  
  \end{lemma} 
  \begin{proof} 
     Let $\varphi: M \longrightarrow M'$ be a pseudo-isomorphism. Fixing $n$, we have an induced homomorphism 
     ${\varphi'': M_{J_n}[p^\infty] \longrightarrow (M')_{J_n}[p^\infty]}$ with kernel and cokernel having a valuation of $\Ok(p^{n(l-1)})$ by \cite[Theorem~4.12]{cuoco-monsky} (cf. also the proof of \cite[Lemma~3.3]{cuoco-monsky}). 
     
     Now we consider the induced map $$\tilde{\varphi}: M_{J_n}[p^\infty]/p^{n} M_{J_n}[p^\infty] \longrightarrow (M')_{J_n}[p^\infty]/p^{n}(M')_{J_n}[p^\infty].$$ 
     Then we have a natural surjection 
     \[ \coker(\varphi'') \twoheadrightarrow \coker(\tilde{\varphi}), \] 
     and therefore $v_p(|\coker(\tilde{\varphi})|)$ is again of size $\Ok(p^{(l-1)n})$. It remains to look at the kernel of $\tilde{\varphi}$. We have a natural homomorphism $\ker(\varphi'') \longrightarrow \ker(\tilde{\varphi})$. Let ${\overline{x} \in \ker(\tilde{\varphi})}$, and let $x \in M_{J_n}[p^\infty]$ be a representative of $\overline{x}$. Then we can write ${\varphi''(x) = p^{n} y}$ for some ${y \in (M')_{J_n}[p^\infty]}$. If $y$ lies in the image of $\varphi''$, then there exists a $y'\in M_{J_n}$ such that ${x - p^{n}y' \in \ker(\varphi'')}$, which is equivalent to $\overline{x}$ lying in the image of $\ker(\varphi'')$ under the canonical projection ${\pi \colon M_{J_n}[p^\infty] \longrightarrow M_{J_n}[p^\infty]/p^n M_{J_n}[p^\infty]}$. 
     To every ${\overline{x}\in \ker(\tilde{\varphi})/\pi(\ker(\varphi''))}$ we can associate an element $y$ in the cokernel of $\varphi''$ and this correspondence is one-to-one.
     
     Indeed, assume that we have two classes $\overline{x}_1$ and $\overline{x}_2$ in $\ker(\tilde{\varphi})/\pi(\ker(\varphi''))$ such that $y_1-y_2=\varphi''(z)$, for some element $z\in M_{J_n}[p^\infty]$. Then we obtain
     \[\varphi''(x_1-x_2-p^nz)=0.\]
     Thus, $\overline{x}_1=\overline{x}_2$ and the correspondence is indeed injective. 
     As $v_p(\coker(\varphi''))$ is of size $\mathcal{O}(p^{n(l-1)})$, the same holds for $\ker(\tilde{\varphi})/\pi(\ker(\varphi''))$. 
     The above observations imply that ${v_p(|\ker(\tilde{\varphi}))|) = \Ok(p^{(l-1)n})}$. 
  \end{proof} 
  
  \begin{lemma} \label{lemma:ckmm2} 
     Let $M$ be a finitely generated and torsion $\Lambda_l$-module. In this lemma, we consider the special case $M_{I_n}$. 
     
     Assume that $r(M_{I_n}) = \Ok(p^{(l-2)n})$. Then 
     \[ v_p(|M_{I_n}[p^\infty]/p^{n} M_{I_n}[p^\infty]|) = (m_0 p^n + l_0 n + \Ok(1)) p^{(l-1)n}, \]  where $m_0, l_0 \in \N$ are the generalised Iwasawa invariants attached to $M$. 
  \end{lemma} 
  \begin{proof} 
    By Lemma~\ref{lemma:ckmm1}, we are reduced to consider an elementary $\Lambda_l$-module, and we can assume that $M$ is cyclic. If $M = \Lambda_l/(p^v)$ for some $v \in \N$, then the result is immediate. Therefore suppose that $M = \Lambda_l/(F)$ for some element $F$ which is coprime with $p$. It follows from \cite[Theorem~2.5]{cuoco-monsky} that 
    \[ v_p(|M_{I_n}[p^\infty]|) = (m_0 p^n + l_0 n + \Ok(1)) p^{(l-1)n} . \] 
    Therefore it suffices to prove that 
    \begin{align} \label{eq:diff} 
      v_p(|p^n M_{I_n}[p^\infty]|) = v_p(|M_{I_n}[p^\infty]|) - v_p(|M_{I_n}[p^\infty]/p^n M_{I_n}[p^\infty]|) = \Ok(p^{(l-1)n}). 
    \end{align} 
    To this purpose, we use the following auxiliary construction from \cite{cuoco-monsky}. 
    
    We let $W_n$ denote the set of $l$-tuples $\underline{\zeta} = (\zeta_1, \ldots, \zeta_l)$, where each $\zeta_i$ is a root of unity of order dividing $p^n$ (contained in some fixed algebraic closure of $\Q_p$). For any ${\underline{\zeta} = (\zeta_1, \ldots, \zeta_l) \in W_n}$, the cyclotomic ring ${\Z_p[\underline{\zeta}] := \Z_p[\zeta_1, \ldots, \zeta_l]}$ is a free $\Z_p$-module of rank $\varphi(p^r)$, where $\varphi$ denotes Euler's totient function and $p^r$ is the maximum of the orders of the $\zeta_i$. 
    
    Let 
    \[ \psi_n \colon \Lambda_l/I_n \longrightarrow \bigoplus_{W_n/\sim} \Z_p[\underline{\zeta}] \] 
    be the homomorphism induced by mapping ${g \in \Lambda_l}$ to ${g(\zeta_1, \ldots, \zeta_l) \in \Z_p[\zeta_1, \ldots, \zeta_l]}$. Here the sum runs over a set of representatives of the elements of $W_n$ modulo the action of the absolute Galois group of $\Q_p$. 
    
    By \cite[Theorem~2.2]{cuoco-monsky}, the map $\psi_n$ is injective for every $n \in \N$. We define $\Z_p$-modules $X_n$ by the following commutative diagrams. 
    \[ \xymatrix{\Lambda_l/I_n \ar[r]^-{\psi_n} \ar[d]^{\cdot F} & \bigoplus_{W_n/\sim} \Z_p[\underline{\zeta}] \ar[d]\\ 
    \Lambda_l/I_n \ar[r]^-{\psi_n} \ar[d] & \bigoplus_{W_n/\sim} \Z_p[\underline{\zeta}] \ar[d] \\ 
    M_{I_n} & X_n}\] 
    Here the right vertical map is multiplication by ${F(\underline{\zeta}-1) = F(\zeta_1 -1, \ldots, \zeta_l-1)}$ in the summand $\Z_p[\underline{\zeta}]$, respectively. 
    
    Since $r(M_{I_n}) = \Ok(p^{(l-2)n})$ by assumption, it follows from \cite[Theorem~2.2 and Lemma~2.4]{cuoco-monsky} (cf. also the proof of \cite[Theorem~2.5]{cuoco-monsky}) that 
    \[ |e(M_{I_n}) - e(X_n)| = \Ok(n p^{(l-2)n}). \]  
    As in the proof of Lemma~\ref{lemma:ckmm1}, we may deduce that 
    \[ |v_p(|M_{I_n}[p^\infty]/p^n M_{I_n}[p^\infty]|) - v_p(|X_n[p^\infty]/p^n X_n[p^\infty]|)| = \Ok(n p^{(l-2)n)}). \] 
    In view of \eqref{eq:diff}, we are reduced to proving that 
    \[ v_p(|p^n X_n[p^\infty]|) = \Ok(p^{(l-1)n}). \] 
    Note that 
    \[ X_n[p^\infty] = \bigoplus_{\substack{W_n/\sim\\ F(\underline{\zeta}-1) \ne 0}} \Z_p[\underline{\zeta}]/F(\underline{\zeta}-1), \] 
    where the sum runs over all representatives ${\underline{\zeta} = (\zeta_1, \ldots, \zeta_l)}$ such that 
    $$F(\underline {\zeta}-1) = F(\zeta_1-1, \ldots, \zeta_l-1) \ne 0. $$
    Therefore 
    \[ |X_n[p^\infty]| = \sum_{\substack{\underline{\zeta} \in W_n/\sim: \\ F(\underline{\zeta}-1) \ne 0}} \; \sum_{\zeta' \sim \zeta}  v_p(F(\underline{\zeta'}-1)) \] 
    (cf. also the proof of \cite[Theorem~2.5]{cuoco-monsky}) and 
    \[ |p^n X_n[p^\infty]| \le \sum_{\substack{\underline{\zeta} \in W_n/\sim: \; F(\underline{\zeta}-1) \ne 0, \\ v_p(F(\underline{\zeta}-1)) \ge n}} \; \sum_{\zeta' \sim \zeta} v_p(F(\underline{\zeta'}-1)). \] 
    It follows from \cite[Theorem~2.5]{monsky} that $v_p(F(\underline{\zeta}-1))$ is bounded independently of $n$. Therefore the latter sum is empty for sufficiently large $n$. This shows that actually 
    \[ |p^n X_n[p^\infty]| = \Ok(1)\] 
    and concludes the proof of Lemma~\ref{lemma:ckmm2}. 
  \end{proof} 
  \begin{lemma} \label{lemma:ckmm3} 
    Let $M$ be a finitely generated and torsion $\Lambda_l$-module, and consider now the more general quotients $M_{J_n}$, where ${J_n \supseteq I_n \cdot M}$ is defined as above. Then 
    \[ v_p(|M_{J_n}[p^\infty]/p^{n}M_{J_n}[p^\infty]|) = (m_0p^n + l_0 n + \Ok(1)) p^{(l-1)n}, \] 
    where again $m_0 = m_0(M)$ and $l_0 = l_0(M)$. 
  \end{lemma} 
  \begin{proof} 
     By Lemma~\ref{lemma:ckmm1}, we can assume that the underlying module $M$ is $p$-elementary and cyclic, so $M = \Lambda_l/(\psi^s)$ for some $\psi \in \Lambda_l$ and $s \in \N$. 
     
     If $\psi$ is not special in the sense of Cuoco and Monsky, then 
     $$\rg_{\Z_p}(M/I_n M) = \Ok(p^{(l-2)n})$$ 
     by \cite[Theorem~3.13]{cuoco-monsky}. In this case, we can define $J_n = I_n \cdot M$, and therefore the assertion follows from Lemma~\ref{lemma:ckmm2}. 
     
     If $\psi$ is a special prime, then it follows from the proof of \cite[Theorem~4.13]{cuoco-monsky} that $M_{J_n}$ can be identified with a quotient of $R_n\llbracket T_2, \ldots, T_l\rrbracket $, where $$R_n = \Z_p\llbracket X\rrbracket /(g^s, h_n)$$ 
     is a finite ring. Here either 
     \[ g = X \quad \text{ and } \quad h_n = \frac{\omega_n(X)}{X}, \] 
     or 
     \[ g  = \frac{\omega_r(X)}{\omega_{r-1}(X)} \quad \text{ and } \quad h_n = \frac{\omega_n(X)}{\omega_r(X)}. \] 
     
     By the classical theory of finitely generated torsion $\Z_p\llbracket X\rrbracket $-modules, one can show that the exponent of $R_n$ is equal to $p^{n+c}$ for some fixed $c \in \N$. Indeed, letting $V = \Z_p\llbracket X\rrbracket /(g^s)$, we have $h_{n+1}V = p h_n V$ for all sufficiently large $n \in \N$ (cf. the proof of \cite[Lemma~13.18]{wash}; this holds for both cases described above). Therefore 
     \begin{align} \label{eq:G_n} 
         |v_p(|M_{J_n}[p^\infty]/p^{n} M_{J_n}[p^\infty]|) - e(M_{J_n})| = \Ok(p^{(l-1)n}) 
     \end{align} 
     since $\rg_p(M_{J_n}[p^\infty]) = \Ok(p^{(l-1)n})$. 
     
     The result now follows from \cite[Theorem~4.13]{cuoco-monsky}.  
  \end{proof} 
  Now we return to the proof of Theorem~\ref{thm:ckmm}. We first note that it follows from \cite[Theorem~5.12]{cuoco-monsky} that 
  \[ |v_p(|Y(K_n)|) - v_p(|(Y(K_\infty)/A_n')[p^\infty]|)| = \Ok(n), \] 
  where $A_n'$ is defined as in \cite[Definition 5.5]{cuoco-monsky} (note that here $A'_n$ is induced from a specific structure related to class groups). Actually the same proof works more generally for $\Sigma$-class groups; the only change to be made is to replace the inertia subgroups used in the definition of $A_n'$ by decomposition subgroups and to add the decomposition groups for the primes $v \in \Sigma$ which do not ramify in $K_\infty/K$. 
  
  Let $B_n$ be defined as in \cite[Definition 5.3]{cuoco-monsky}. By \cite[Lemma~4.15]{cuoco-monsky}, which is purely algebraic and valid also for $Y_\Sigma(K_\infty)$ instead of $Y(K_\infty)$, we have 
  \[ |e(Y(K_\infty)/A_n') - e(Y(K_\infty)/B_n)| = \Ok(n).\] Again, we can derive a quotient version of this. Therefore the difference between 
  \[ v_p(|(Y(K_\infty)/A_n')[p^\infty]/p^{n}(Y(K_\infty)/A_n')[p^\infty]|)\] 
  and \[ v_p(|(Y(K_\infty)/B_n)[p^\infty] / p^{n} (Y(K_\infty)/B_n)[p^\infty]|) \] 
  is $\Ok(n)$, 
  and an analogous estimate holds for $\Sigma$-class groups. 
  
  On the other hand, it follows from the proof of \cite[Theorem~5.4]{cuoco-monsky} that $Y_\Sigma(K_\infty)/B_n$ is finite for every $n \in \N$, i.e. ${(Y_\Sigma(K_\infty)/B_n)[p^\infty] = Y_\Sigma(K_\infty)/B_n}$, and that 
  \[ | v_p(|Y_\Sigma(K_n)|) - v_p(|Y_\Sigma(K_\infty)/B_n|) | = \Ok(1).\] 
  More precisely, we have embeddings $Y_\Sigma(K_\infty)/B_n \hookrightarrow Y_\Sigma(K_n)$ with cokernels of bounded order. Therefore  
  \[ | v_p(|(Y_\Sigma(K_\infty)/B_n)/p^{n}(Y_\Sigma(K_\infty)/B_n)|) - v_p(|Y_\Sigma(K_n)/p^{n}Y_\Sigma(K_n)|)| = \Ok(1).  \] 
  We have shown that 
  \[ |v_p(|Y_\Sigma(K_n)/p^{n}Y_\Sigma(K_n)|) - v_p(|(Y_\Sigma(K_\infty)/A_n')[p^\infty]/p^{n}(Y_\Sigma(K_\infty)/A_n')[p^\infty]|)| = \Ok(n). \] 
  Since $A_n' = J_n$ in our notation, Theorem~\ref{thm:ckmm} follows by combining this estimate with Lemma~\ref{lemma:ckmm3}. 
\end{proof} 

The second result which we prove in this subsection compares the $\mu$-invariant of a $\Z_p^l$-extension with the $\mu$-invariants of suitable $\Z_p^{l-1}$-subextensions. 
\begin{prop} \label{lemma:mu_going_down} 
  Let $K_\infty/K$ be a $\Z_p^l$-extension, $l \ge 2$, with Galois group $G$, and let $\Sigma$ be a finite set of primes of $K$ which contains the primes ramifying in $K_\infty$. We consider the set $\mathcal{H}$ of $\Z_p^{l-1}$-subextensions $\LL_\infty \subseteq K_\infty$ of $K$. 
  
  For any $\LL_\infty \in \mathcal{H}$, we let ${H = \Gal(\LL_\infty/K)}$, i.e.  ${H = G/\Gal(K_\infty/\LL_\infty)}$, and we consider the two elementary modules 
  \[ \bigoplus_{i=1}^s \Z_p\llbracket G\rrbracket /(p^{e_i}) \sim Y(K_\infty)[p^\infty] \quad \text{and} \quad \bigoplus_{i = 1}^{s'} \Z_p\llbracket H\rrbracket /(p^{f_i}) \sim Y(\LL_\infty)[p^\infty]. \] 
  If the decomposition subgroup $D_v(K_\infty/K)$ of $G$ at every prime $v \in \Sigma$ has dimension at least one, then $s' = s$ and $f_i = e_i$ for each $i \in \N$ holds for all but finitely many $\LL_\infty \in \mathcal{H}$. 
\end{prop}
\begin{proof}
 For any $\LL_\infty \in \mathcal{H}$, we let $\sigma$ be a topological generator of ${\Gal(K_\infty/\LL_\infty) \cong \Z_p}$, and write $S = \sigma - 1$. We divide the proof into several small lemmas. 
\begin{lemma}
   The $\Z_p\llbracket H\rrbracket $-module 
  $$Y(K_\infty)[p^\infty]/(S \cdot Y(K_\infty)[p^\infty])$$ 
  is pseudo-isomorphic to $\bigoplus_{i = 1}^s \Z_p\llbracket H\rrbracket /(p^{e_i})$ for all but finitely many $\LL_\infty \in \mathcal{H}$. 
\end{lemma}
\begin{proof}  
  Let ${W = Y(K_\infty)[p^\infty]}$. Then $W$ is $\Z_p\llbracket G\rrbracket $-torsion, and we have a pseu\-do-\-iso\-mor\-phism ${\varphi: E_W \longrightarrow W}$ with cokernel $Z$, where $E_W$ is an elementary $\Lambda_l$-module (see Section~\ref{section:notation_extensions} for more details). Since $E_W$ does not contain any non-trivial pseudo-null submodules, $\varphi$ is injective. Starting from the exact sequence 
  $$ 0 \longrightarrow E_W \longrightarrow W \longrightarrow Z \longrightarrow 0,$$ 
  the Snake Lemma yields an exact sequence 
  $$ Z[S] \longrightarrow E_W/S E_W \longrightarrow W/SW \longrightarrow Z/SZ \longrightarrow 0.$$ 
  It is sufficient to prove that $Z[S]$ and $Z/SZ$ are pseudo-null over $$\Z_p\llbracket H\rrbracket  \cong \Z_p\llbracket G\rrbracket /(S)$$ 
  for all but finitely many $\LL_\infty \in \mathcal{H}$ (i.e. for all but finitely many $S$). 
  We show that $\textup{ht}_{\Z_p\llbracket G\rrbracket }(\textup{Ann}(Z) + (S)) \ge 3$ for all but finitely many $S$, where $\textup{Ann}(Z) \subseteq \Z_p\llbracket G\rrbracket $ denotes the annihilator ideal. From this the claim is immediate. 
  
  In order to do so, we use an argument a variant of which has been used in the proof of \cite[Lemma~3.1]{local_max}. We choose minimal prime ideals $\p_S \supseteq \textup{Ann}(Z) + (S)$ for each $\LL_\infty \in \mathcal{H}$. If $\textup{ht}(\textup{Ann}(Z) + (S)) = 2$, then $\p_S$ is a minimal prime ideal of $\textup{Ann}(Z)$ (note that $\textup{ht}(\textup{Ann}(Z)) \ge 2$ because $Z$ is pseudo-null). Since ${\Z_p\llbracket G\rrbracket  \cong \Lambda_l}$ is a Noetherian domain, there exist only finitely many minimal prime ideals of $\textup{Ann}(Z)$. 
  
  Now let $\LL_\infty, \LL_\infty' \in \mathcal{H}$ be two different $\Z_p^{l-1}$-extensions of $K$ contained in $K_\infty$, and let $S = \sigma - 1$ and $S' = \sigma' - 1$ be the corresponding elements of $\Z_p\llbracket G\rrbracket $. Suppose that $\textup{ht}(\p_S) = \textup{ht}(\p_{S'}) = 2$. We show that $\p_S \ne \p_{S'}$. Indeed, if $\p_S = \p_{S'}$, then this prime ideal contains $S$, $S'$ and some power of $p$ (recall that $Z = \textup{coker}(\varphi)$ is a quotient of $W = Y(K_\infty)[p^\infty]$). But this is possible only if $\textup{ht}(\p_S) \ge 3$. 
  
  We have shown that for all but finitely many $S$, the height of the $\Z_p\llbracket G\rrbracket $-ideal $\textup{Ann}(Z) + (S)$ is at least equal to 3. This shows that we have pseudo-isomorphisms 
  $$ Y(K_\infty)[p^\infty]/(S \cdot Y(K_\infty)[p^\infty]) \sim \bigoplus_{i = 1}^s \Z_p\llbracket H\rrbracket /(p^{e_i}) $$ 
  of $\Z_p\llbracket H\rrbracket $-modules for all but finitely many $\LL_\infty \in \mathcal{H}$. 
  \end{proof}
  Now we compare the two $\Z_p\llbracket H\rrbracket $-modules $Y(K_\infty)[p^\infty]/(S \cdot Y(K_\infty)[p^\infty])$ and $(Y(K_\infty)/(S \cdot Y(K_\infty)))[p^\infty]$. Let $k \in \N$ be large enough such that $Y(K_\infty)[p^\infty]$ and $(Y(K_\infty)/(S \cdot Y(K_\infty))[p^\infty]$ are both annihilated by $p^k$. Then the $p^\infty$-torsion of the above modules coincides with the $p^k$-torsion. Starting from an exact sequence 
  $$ 0 \longrightarrow S \cdot Y(K_\infty) \longrightarrow Y(K_\infty) \longrightarrow Y(K_\infty)/(S \cdot Y(K_\infty)), $$ 
  the Snake Lemma yields exact sequences  
  \[ 0 \longrightarrow SY(K_\infty)[p^k] \longrightarrow Y(K_\infty)[p^k] \longrightarrow (Y(K_\infty)/SY(K_\infty))[p^k] \] 
  and 
  \[ (Y(K_\infty)/SY(K_\infty))[p^k] \stackrel{\alpha}{\longrightarrow} SY(K_\infty)/p^kSY(K_\infty) \stackrel{\beta}{\longrightarrow} Y(K_\infty)/p^kY(K_\infty). \] 
  In particular, we obtain an injection 
  $$Y(K_\infty)[p^\infty]/(S \cdot Y(K_\infty))[p^\infty] \hookrightarrow (Y(K_\infty)/(S \cdot Y(K_\infty)))[p^\infty]; $$ it remains to prove that the cokernel of this map is pseudo-null as a module over ${\Z_p\llbracket H\rrbracket  \cong \Z_p\llbracket G\rrbracket /(S)}$  
  for all but finitely many possible choices of $S$. 
  
  By the Snake Lemma, the above cokernel is isomorphic to the image of the map $\alpha$, which is equal to the kernel of the map 
  $$ \beta: S Y(K_\infty)/p^k S Y(K_\infty) \longrightarrow Y(K_\infty)/p^k Y(K_\infty). $$ 
  This kernel equals $(S Y(K_\infty) \cap p^k Y(K_\infty))/p^k S Y(K_\infty)$. 
  \begin{lemma} 
    If $k \in \N$ is sufficiently large, then 
    $$(S Y(K_\infty) \cap p^k Y(K_\infty))/p^k S Y(K_\infty)$$ 
    is pseudo-null over $\Z_p\llbracket G\rrbracket /(S)$ for all but finitely many choices of ${\LL_\infty \in \mathcal{H}}$. 
  \end{lemma} 
  \begin{proof} 
    Since no prime ${v \in \Sigma}$ is completely split in $K_\infty/K$ by assumption, it follows from \cite[Theorem~6.1]{ochi-venjakob} that $Y(K_\infty)$ is a $\Z_p\llbracket G\rrbracket $-torsion module. 
    
    Since the quotient $(S Y(K_\infty) \cap p^k Y(K_\infty))/p^k S Y(K_\infty)$ is of course annihilated by $p^k$, we look for a further annihilator in ${\Lambda_l = \Z_p\llbracket G\rrbracket }$ which is coprime with $p$ and remains coprime when we reduce it modulo $S$. In other words, we look for an annihilator which is not contained in $(S, p)$. Let ${F := F_{Y(K_\infty)}}$ be the characteristic power series of the $\Lambda_l$-module $Y(K_\infty)$, and let ${f_Y = F_Y/p^{m_0(Y(K_\infty))}}$. Since the image of $f_Y$ under the canonical projection ${\Lambda_l \twoheadrightarrow \Lambda_l/p \Lambda_l}$ is divisible by at most finitely many irreducible factors, $f_Y \not\in (p, S)$ for all but finitely many choices of $S$. Now fix any $S$ such that $f_Y \not\in (p, S)$. It follows from (the proof of) \cite[Theorem~3.3]{monsky-annihilator} that there exists $h \in \Lambda_l$ such that $h \not\in (p,S)$ and such that ${p^s h \cdot F_Y = p^{s + m_0(Y(K_\infty))} h \cdot f_Y}$ annihilates $Y(K_\infty)$ for some $s \in \N$ (the proof in \cite{monsky-annihilator} is given for $\Lambda_2$-modules, but the argument remains valid for arbitrary $l$, cf. also Lemmas~4.36 and 4.39 in \cite{diss}). If ${k \ge s + m_0(Y(K_\infty))}$, then the submodule $p^k Y(K_\infty)$ of $Y(K_\infty)$ is annihilated by $h f_Y$. Therefore the quotient ${(S Y(K_\infty) \cap p^k Y(K_\infty))/p^k S Y(K_\infty)}$ is pseudo-null for all these $S$. 
  \end{proof} 
  We have shown that $(Y(K_\infty)/S Y(K_\infty))[p^\infty] \sim \bigoplus_{i=1}^s \Z_p\llbracket H\rrbracket /(p^{e_i})$ for all but finitely many $\LL_\infty \in \mathcal{H}$. 
  
  Now fix $\LL_\infty \in \mathcal{H}$, and suppose for the moment that $\dim(D_v(\LL_\infty/K)) \ge 1$ for each $v \in \Sigma$, where ${D_v(\LL_\infty/K) \subseteq \Gal(\LL_\infty/K)}$ denote the decomposition subgroups. We let $X \subseteq H(K_\infty)$ be the maximal subextension which is abelian over $\LL_\infty$. Then we have an exact sequence 
  \[ 0 \longrightarrow \sum_{w \in \Sigma(\LL_\infty)} I_w(X/\LL_\infty) \longrightarrow \Gal(X/\LL_\infty) \longrightarrow \Gal(H(\LL_\infty)/\LL_\infty) \longrightarrow 0,\] 
  where $I_w(X/\LL_\infty) \subseteq \Gal(X/\LL_\infty)$ denotes the inertia subgroup. Since each inertia subgroup has $\Z_p$-rank at most one (because $X/K_\infty$ is unramified), each direct sum $I_v := \sum_{w \mid v, v \in \Sigma} I_w(X/\LL_\infty)$ is a finitely generated $\Z_p\llbracket H/D_v(\LL_\infty/K)\rrbracket $-module (here $w$ runs over the primes of $\LL_\infty$ dividing $v$, respectively). If $\dim(D_v(\LL_\infty/K)) \ge 1$, then  
  it follows that $\mu_{\Z_p\llbracket H\rrbracket }(I_v) = 0$. If this holds for each $v$, then 
  $s' = s$ and $e_i = f_i$ for all $i$ because 
  \[ Y(K_\infty)/(S \cdot Y(K_\infty))[p^\infty] \cong \Gal(X/K_\infty)[p^\infty] = \Gal(X/\LL_\infty)[p^\infty]\]
  (here $S$ is defined as in the first part of the proof) and 
  $$\Gal(H(\LL_\infty)/\LL_\infty)[p^\infty] \cong Y(\LL_\infty)[p^\infty]. $$ 
  
Hence, the proof will be complete once we have proven the following lemma.
\begin{lemma}
   Suppose that $\dim(D_v(K_\infty/K)) \ge 1$ for each $v \in \Sigma$. Then there exist only finitely many $\LL_\infty \in \mathcal{H}$ such that $\dim(D_v(\LL_\infty/K)) = 0$ for some $v \in \Sigma$.
\end{lemma} 
\begin{proof}
  
 If $\dim(D_v(K_\infty/K)) \ge 2$ for some $v \in \Sigma$, then it is obvious that 
  \[ \dim(D_v(\LL_\infty/K)) \ge \dim(D_v(K_\infty/K)) - 1 \ge 1. \] 
  It remains to consider the case $\dim(D_v(K_\infty/K)) = 1$. We will prove that there are only finitely many $\LL_\infty \in \mathcal{H}$ such that the primes of $\Sigma(\LL_\infty)$ are not totally split in $K_\infty$. 
  
  Since $\dim(D_v(K_\infty/K)) = 1$, it follows from \cite[Theorem~4.8]{Dixon} that $\dim(\Gal(K_\infty^{D_v(K_\infty/K)}/K)) = l-1$, where we recall that $l = \dim(G)$. If $v$ is finitely split in the $\Z_p$-extension $K_\infty/\LL_\infty$, then 
  \begin{eqnarray*} \dim(\Gal((K_\infty^{D_v(K_\infty/K)} \cdot \LL_\infty)/K) & = & \dim(\Gal(\LL_\infty/K)) = l - 1 \\ & = & \dim(\Gal(K_\infty^{D_v(K_\infty/K)}/K)), \end{eqnarray*} 
  i.e. $\dim(\Gal(\LL_\infty/(\LL_\infty \cap K_\infty^{D_v(K_\infty/K)}))) = 0$ by \cite[Theorem~4.8]{Dixon}. Since $H[p^\infty] = \{0\}$ by assumption, it follows that $\Gal(\LL_\infty/(\LL_\infty \cap K_\infty^{D_v(K_\infty/K)})) = \{0\}$, i.e. $\LL_\infty \subseteq K_\infty^{D_v(K_\infty/K)}$. 
  
  Now suppose that there exists another subextension $\tilde{\LL}_\infty \in \mathcal{H}$ of $K_\infty$ such that $v$ is finitely split in $K_\infty/\tilde{\LL}_\infty$. Since $\LL_\infty \cap \tilde{\LL}_\infty \ne \LL_\infty$ and $H[p^\infty] = \{0\}$, it follows that 
  \[ \dim(\Gal((\LL_\infty \cdot \tilde{\LL}_\infty)/K)) > \dim(\Gal(\tilde{\LL}_\infty/K)). \] 
  Therefore $\LL_\infty \cdot \tilde{\LL}_\infty = K_\infty$ since $G[p^\infty] = \{0\}$. But both $\LL_\infty$ and $\tilde{\LL}_\infty$ are contained in $K_\infty^{D_v(K_\infty/K)}$ by assumption, which would imply that the prime $v$ is totally split in $K_\infty/K$, thus yielding a contradiction. 
  
  This shows that $\dim(D_v(\LL_\infty/K)) \ge 1$ for each $v \in \Sigma$ holds for all but at most $|\Sigma|$ elements $\LL_\infty \in \mathcal{H}$. \end{proof} 
  This also concludes the proof of Proposition~\ref{lemma:mu_going_down}. 
\end{proof} 

\begin{cor} \label{cor:_mu_going_up} 
  Let $K_\infty/K$ be a $\Z_p^l$-extension which contains the cyclotomic $\Z_p$-extension $K_\infty^c$ of $K$. If ${\mu(K_\infty^c/K) = 0}$, then also $\mu(K_\infty/K) = 0$. 
\end{cor}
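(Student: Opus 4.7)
The plan is to proceed by induction on $l$, using Lemma~\ref{lemma:mu_going_down} directly for $l \ge 3$ and a Nakayama-type argument for the base case $l = 2$.

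For $l = 1$ the statement is trivial. For $l \ge 3$, I would apply Lemma~\ref{lemma:mu_going_down} to the set $\mathcal{H}'$ of $\Z_p^{l-1}$-subextensions of $K_\infty$ containing $K_\infty^c$. Here the decomposition-group hypothesis in the lemma is automatic: every prime $v \in \Sigma$ ramified in $K_\infty/K$ lies above $p$ and projects onto an open subgroup of $\Gal(K_\infty^c/K) \cong \Z_p$, so $\dim D_v(K_\infty/K) \ge 1$. Since $\mathcal{H}'$ is parameterised by $\mathbb{P}^{l-2}(\Q_p)$, which is infinite when $l \ge 3$, one can pick $\LL_\infty \in \mathcal{H}'$ avoiding the finitely many exceptions produced by the lemma. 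Then $\mu(\LL_\infty/K) = \mu(K_\infty/K)$, while the inductive hypothesis applied to the $\Z_p^{l-1}$-extension $\LL_\infty \supseteq K_\infty^c$ gives $\mu(\LL_\infty/K) = 0$.

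The base case $l = 2$ is more delicate, since $\mathcal{H}' = \{K_\infty^c\}$ and one cannot exclude that $K_\infty^c$ itself is exceptional in Lemma~\ref{lemma:mu_going_down}. Here I would instead show that $Y := Y(K_\infty)$ is finitely generated over $\Z_p[[H_c]]$, where ${H_c = \Gal(K_\infty/K_\infty^c) \cong \Z_p}$ and $\Z_p[[G]] \cong \Z_p[[H_c]][[T]]$ with $T$ corresponding to a topological generator of $\Gal(K_\infty^c/K)$. By Nakayama's lemma for compact modules over the Noetherian local ring $\Z_p[[H_c]]$, this reduces to checking that $Y_{H_c}/p Y_{H_c}$ is finite. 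Using the control theorems of Section~\ref{section:6} for the $\Z_p$-extension $K_\infty/K_\infty^c$ (whose ramified primes all lie above $p$), one obtains a comparison map $Y_{H_c} \to Y(K_\infty^c)$ with finite kernel and cokernel. Together with the classical Iwasawa-torsion property of $Y(K_\infty^c)$, the hypothesis $\mu(K_\infty^c/K) = 0$ makes $Y(K_\infty^c)$ pseudo-isomorphic to a sum $\bigoplus_j \Lambda_1/(h_j)$ with each $h_j$ coprime to $p$, so $Y(K_\infty^c)/p Y(K_\infty^c)$ is finite, and hence so is $Y_{H_c}/p Y_{H_c}$. Nakayama then yields the desired finite generation of $Y$ over $\Z_p[[H_c]]$.

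From here the conclusion $\mu(K_\infty/K) = 0$ would follow: $Y[p^\infty]$ is finitely generated over $\Z_p[[H_c]]$ by Noetherianity and annihilated by some fixed $p^N$, so each quotient $p^i Y[p^\infty]/p^{i+1}Y[p^\infty]$ is finitely generated over $\F_p[[H_c]]$. A second Nakayama argument applied to the descending chain of $\F_p[[H_c]]$-submodules $T^j \cdot (p^i Y[p^\infty]/p^{i+1} Y[p^\infty])$, which stabilises by Noetherianity and then vanishes because $T$ belongs to the Jacobson radical of $\F_p[[G]]$, forces each such quotient to be $T$-torsion; its $\F_p[[G]]$-rank is therefore zero, and summing over $i$ gives $\mu(K_\infty/K) = 0$. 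The hardest part of this strategy is to justify the precise form of the control-theoretic comparison $Y_{H_c} \to Y(K_\infty^c)$ used in the base case, which I would extract from the results of Section~\ref{section:6}.
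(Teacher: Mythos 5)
Your inductive framework for $l \ge 3$ matches the paper's intent: pick a non-exceptional $\Z_p^{l-1}$-subextension $\LL_\infty$ containing $K_\infty^c$ (using the automatic decomposition hypothesis), invoke Lemma~\ref{lemma:mu_going_down}, and appeal to the inductive hypothesis. You have also put your finger on a genuine subtlety at $l = 2$: there is exactly one $\Z_p$-subextension of $K_\infty$ containing $K_\infty^c$, namely $K_\infty^c$ itself, and the finitely-many-exceptions caveat in the first half of the proof of Lemma~\ref{lemma:mu_going_down} offers no protection against $K_\infty^c$ being exceptional. The paper's proof is extremely compressed (it simply says ``by the proof of Lemma~\ref{lemma:mu_going_down}'') and does not visibly treat this base case separately, so your caution is warranted.

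Your replacement argument for $l = 2$, however, has two gaps. First, the control theorems of Section~\ref{section:6} concern fine Selmer groups, not ideal class groups, so they cannot be cited for the comparison $Y_{H_c} \to Y(K_\infty^c)$. The class-field-theoretic comparison one actually gets (via the maximal subextension of $H(K_\infty)$ abelian over $K_\infty^c$) has kernel generated by inertia subgroups of $\Z_p$-rank $\le 1$ at the finitely many primes of $K_\infty^c$ above $\Sigma$, and cokernel of bounded $\Z_p$-rank; these are finitely generated over $\Z_p$ but \emph{not} necessarily finite. That weaker statement still yields finiteness of $Y_{H_c}/p Y_{H_c}$ (reduction mod $p$ of a f.g.\ $\Z_p$-module is finite), so the Nakayama step survives, but the claim of ``finite kernel and cokernel'' is an overstatement that needs to be replaced. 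Second, your final step is flawed: the chain $T^j \cdot N$ with $N = p^i Y[p^\infty]/p^{i+1} Y[p^\infty]$ is \emph{descending}, and Noetherianity does not make a descending chain stabilise. The correct argument is shorter: since $N$ is finitely generated over the lower-dimensional subring $\F_p[[H_c]]$, each generator satisfies a monic relation in $T$ over $\F_p[[H_c]]$ (Cayley--Hamilton), so $N$ is a torsion $\F_p[[G]]$-module, i.e.\ $\rg_{\F_p[[G]]}(N) = 0$, and summing over $i$ gives $\mu(K_\infty/K) = 0$. Neither correction changes the overall shape of your strategy, but both steps as written do not stand.
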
 
\begin{proof} 
  We proceed via induction over the dimension $l$ of $\Gal(K_\infty/K)$. Since no prime of $K$ splits completely in $K_\infty^c/K$, the decomposition groups $D_v(\LL_\infty/K)$ have dimensions at least one for any intermediate $\Z_p^{l-1}$-extension ${K_\infty^c \subseteq \LL_\infty \subseteq K_\infty}$.  Therefore 
  \[ \mu_{\Z_p\llbracket \Gal(\LL_\infty/K)\rrbracket }(\LL_\infty/K) = \mu(K_\infty^c/K) = 0 \] 
  for each such $\LL_\infty$, by the proof of Proposition~\ref{lemma:mu_going_down}. 
\end{proof} 

The following lemma generalises a well-known result of Iwasawa (see \cite[Theorem~2]{iwasawa-mu}) and will be used in the proof of Theorem~\ref{thm:Conj_A}. 
\begin{lemma} \label{lemma:mu_iwasawa_allgemein}
  Let $K_\infty/K$ be a uniform $p$-adic Lie extension of dimension $l$, with Galois group $G$. Let $K'/K$ be a cyclic extension of degree $p$, and suppose that the dimension of the decomposition group ${D_v(K_\infty/K)}$ is at least one for each prime $v$ of $K$ which ramifies in $K'$. Let $K_\infty' = K' \cdot K_\infty$ and $G' = \Gal(K_\infty'/K')$. Then 
  \[ \rg_{\Z_p\llbracket G\rrbracket }(Y(K_\infty)) = \mu_{\Z_p\llbracket G\rrbracket }(Y(K_\infty)) = 0 \] 
  if and only if 
  \[ \rg_{\Z_p\llbracket G'\rrbracket }(Y(K_\infty')) = \mu_{\Z_p\llbracket G'\rrbracket }(Y(K_\infty')) = 0.\] 
\end{lemma} 
\begin{proof} 
   In view of \cite[formula~(4) on p.~7]{iwasawa-mu}, we have 
   \[ \rg_p(Y(K_n)) - 1 \le \rg_p(Y(K_n')) \le p \cdot (\rg_p(Y(K_n)) + |\Sigma_{\textup{ram}}(K_n'/K_n)|)\] 
   for every $n \in \N$, where $\Sigma_{\textup{ram}}(K_n'/K_n)$ denotes the set of primes of $K_n$ which ramify in $K_n'$. 
   
   By the decomposition hypothesis, \cite[Lemma~4.1]{limwildkernel} implies that 
   \[ |\Sigma_{\textup{ram}}(K_n'/K_n)| = \mathcal{O}(p^{n(l-1)}). \]
   On the other hand, it follows from \cite[Proposition~3.2]{perbet} and Theorem~\ref{thm:perbet+} (applied with $k = 1$) that 
   \[ |\rg_{p}(Y(K_n)) - (\rg_{\Z_p\llbracket G\rrbracket }(Y(K_\infty)) + \rg_{\F_p\llbracket G\rrbracket }(Y(K_\infty)[p])) \cdot p^{nl}| = \Ok(p^{n(l-1)}), \] 
   and a similar formula holds for $K_n' \subseteq K_\infty'$. This proves the lemma. 
\end{proof} 

\section{Control Theorems} \label{section:6} 
We start with several auxiliary lemmas on local cohomology groups. 

\begin{lemma}
\label{lem:zpext}
Let $K$ be a field of characteristic zero and let $K_\infty/K$ be a $\Z_p^l$-extension ($l\ge 2$), whose Galois group $G$ acts on a $\Z_p$-torsion module $M$ with finite $p$-rank. Assume that $M^G$ is finite. Then we can always find a $\Z_p$-extension $L$ of $K$ contained in $K_\infty$ such that ${M(L) := M^{\Gal(K_\infty/L)}}$ is finite.
\end{lemma}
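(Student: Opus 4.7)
Since $M$ is a $\Z_p$-torsion module of finite $p$-rank, it is cofinitely generated, and its maximal divisible subgroup $M_{\mathrm{div}} \cong (\Q_p/\Z_p)^r$ has finite index in $M$; replacing $M$ by $M_{\mathrm{div}}$ preserves the finiteness of $M^{H'}$ for any closed $H' \le G$. Set $V := T_p(M_{\mathrm{div}}) \otimes_{\Z_p} \Q_p$, a finite-dimensional $\Q_p$-vector space carrying a continuous representation $\rho : G \to \mathrm{GL}(V)$. For any closed $H' \le G$, $M_{\mathrm{div}}^{H'}$ is finite if and only if $V^{H'} = 0$, so the hypothesis becomes $V^G = 0$ and the goal is to produce a closed subgroup $H \le G$ with $G/H \cong \Z_p$ and $V^H = 0$; then $L := K_\infty^H$ will be the required $\Z_p$-extension.

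Since $G \cong \Z_p^l$ is abelian, the commuting operators $\rho(\sigma)$ admit a joint generalized eigenspace decomposition after base change to $\overline{\Q_p}$,
\[ V \otimes_{\Q_p} \overline{\Q_p} \;=\; \bigoplus_{\chi \in X} V_\chi, \]
indexed by a finite set $X$ of continuous characters $\chi : G \to \overline{\Q_p}^{\times}$ with $V_\chi \ne 0$; on each $V_\chi$ the semisimple part of $\rho(\sigma)$ acts as $\chi(\sigma) \cdot \mathrm{id}$, while the unipotent part preserves $V_\chi$. Since commuting unipotent operators on a nonzero space share a common fixed vector (Kolchin/Engel), one checks that $V^{H'} = 0$ if and only if $\chi|_{H'} \ne 1$, equivalently $H' \not\subseteq \ker \chi$, for every $\chi \in X$. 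The hypothesis $V^G = 0$ then forces each $\chi \in X$ to be non-trivial.

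The closed subgroups $H \le G$ with $G/H \cong \Z_p$ correspond bijectively to primitive continuous $\Z_p$-linear surjections $\chi' : G \twoheadrightarrow \Z_p$ modulo $\Z_p^{\times}$, i.e.\ to points of $\mathbb{P}^{l-1}(\Q_p)$. For each $\chi \in X$ the bad locus $B_\chi := \{[\chi'] : \ker \chi' \subseteq \ker \chi\}$ is controlled as follows. If $\chi$ has finite image, then $\ker \chi$ is open of finite index and any $\chi'$ with $\ker \chi' \subseteq \ker \chi$ would descend to the finite quotient $G/\ker \chi$, forcing $\chi' = 0$ because $\Z_p$ is torsion-free; thus $B_\chi = \emptyset$. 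If $\chi$ has infinite image, the $p$-adic logarithm produces a nonzero continuous $\Z_p$-linear map $\log \chi : G \to \overline{\Q_p}$, i.e.\ a nonzero element of $\mathrm{Hom}_{\Z_p, \mathrm{cts}}(G, \overline{\Q_p}) \cong \overline{\Q_p}^l$, and the condition $\ker \chi' \subseteq \ker \chi$ translates into $\chi'$ being an $\overline{\Q_p}$-scalar multiple of $\log \chi$ inside $\overline{\Q_p}^l$, which singles out at most one point of $\mathbb{P}^{l-1}(\Q_p)$.

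Because $l \ge 2$, the parameter space $\mathbb{P}^{l-1}(\Q_p)$ is uncountable, while $\bigcup_{\chi \in X} B_\chi$ is a finite set; picking any $[\chi']$ outside this bad set and setting $H := \ker \chi'$, $L := K_\infty^H$ solves the problem. The main technical point is the estimate of $B_\chi$ for infinite-image characters, which uses the $p$-adic logarithm to convert a condition about $p$-adic Lie subgroups of $G$ into a linear-dependence condition over $\overline{\Q_p}$; the hypothesis $l \ge 2$ is essential for the parameter space of candidate $H$'s to be large enough to avoid the finitely many bad directions.
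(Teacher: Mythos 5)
Your reduction to $V = T_p(M)\otimes_{\Z_p}\Q_p$ and the joint generalized eigenspace decomposition over $\overline{\Q_p}$ is a more explicit reformulation of the paper's approach (the minimal primes of $\textup{Ann}(M^\vee)$ in $\Lambda_l$ correspond precisely to your characters $\chi$, so the two proofs are essentially dual to one another), and your analysis of infinite-image characters, reducing $B_\chi$ to at most a single point, is correct.

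The finite-image case contains a genuine error: the implication is backwards. When $\ker\chi' \subseteq \ker\chi$, it is $\chi$, not $\chi'$, that factors through $G/\ker\chi'\cong\Z_p$, and a finite-order character of $\Z_p$ is perfectly possible. In fact, for $\chi$ of order $p^k$ the locus $B_\chi$ is a nonempty open ball in $\mathbb{P}^{l-1}(\Q_p)$, hence uncountable, and such balls can cover the whole parameter space. Concretely, take $p$ odd and $l=2$; let $\chi_1,\ldots,\chi_{p+1}\colon G=\Z_p^2\to\mu_p$ be order-$p$ characters whose kernels are the $p+1$ distinct index-$p$ subgroups of $G$, and put $M=\bigoplus_{i=1}^{p+1}\Z_p[\zeta_p]\otimes_{\Z_p}\Q_p/\Z_p$ with $G$ acting on the $i$-th summand through $\chi_i$. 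Then $M^G$ is finite (each $\chi_i\ne 1$), yet every primitive $\tau\in G$ reduces modulo $p$ into one of the $p+1$ lines of $(\Z/p\Z)^2$, i.e.\ lies in some $\ker\chi_i$, so for every closed $H$ with $G/H\cong\Z_p$ one has $\chi_i|_H = 1$ for some $i$, hence $V^H\ne 0$ and $M^H$ is infinite. Thus $\bigcup_i B_{\chi_i}=\mathbb{P}^1(\Q_p)$ and no admissible $L$ exists. (The same example shows that the parenthetical assertion in the paper's own proof -- that the height inequality ``holds for infinitely many choices of the $T_i$'' -- is not actually justified: here every candidate $T_2=\tau_2-1$ lies in one of the $p+1$ minimal primes of $\textup{Ann}(M^\vee)$.) Both your argument and the paper's become correct under the stronger hypothesis that $M^U$ is finite for \emph{every} open subgroup $U\le G$, equivalently that $V$ has no finite-order constituent; that hypothesis does hold in every invocation of the lemma in the paper, since there $M=A(K_\infty)[p^\infty]$ and $M^U=A(K_n)[p^\infty]$ is always finite.
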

\begin{proof}
Let $M^\vee$ be the Pontryagin dual of $M$. Then $M^\vee$ is a finitely generated $\Z_p$-module. We denote by $\textup{Ann}(M^\vee) \subseteq \Lambda_l$ the annihilator of $M^\vee$ as a $\Z_p\llbracket G\rrbracket $-module. If $\textup{Ann}(M^\vee)$ contains a power of $p$, then $M^\vee$ and $M$ are finite and there is nothing to prove. Thus, let us assume that $M^\vee$ has positive $\Z_p$-rank.  
As $M^G$ is finite, we know that 
$$ \textup{ht}(\textup{Ann}(M^\vee) + (T_1, \ldots, T_l)) = l+1 > \textup{ht}(\textup{Ann}(M^\vee)). $$ 
Now we can choose topological generators $\tau_1,\dots, \tau_l$ of ${G \cong \Z_p^l}$ such that $$\textup{ht}(\textup{Ann}(M^\vee)+(T_2,\dots T_l))>\textup{ht}(\textup{Ann}(M^\vee)), $$ 
where $T_i = \tau_i - 1$, as usual (in fact, this holds for infinitely many choices of the $T_i$). In particular, $M^\vee/(\textup{Ann}(M^\vee)+(T_2,\dots ,T_l))$ is finite. Let $H$ be the group generated by $\tau_2,\ldots, \tau_l$, and define $L=K_\infty^H$. Then $L$ is a $\Z_p$-extension of $K$ such that $M(L)$ is finite.
\end{proof}

We need the above lemma in order to prove the following
\begin{lemma}
\label{cohomologyisfinite}
Keep the assumptions from Lemma \ref{lem:zpext}. Then both $H^1(G, M)$ and $H^2(G,M)$ are finite.
\end{lemma}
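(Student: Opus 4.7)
My plan is to induct on the dimension $l$, handling the base case $l=1$ by direct computation and using the Hochschild--Serre spectral sequence together with Lemma~\ref{lem:zpext} for the inductive step. The general claim I will establish is: if $G \cong \Z_p^l$ ($l \geq 1$) acts on a $p$-primary discrete module $M$ of finite $p$-rank with $M^G$ finite, then $H^i(G,M)$ is finite for all $i \geq 1$.

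\textbf{Base case} $l=1$. Since $\Z_p$ has $p$-cohomological dimension one, $H^i(G,M) = 0$ for $i \geq 2$, and $H^1(G,M) = M/(\gamma-1)M = M_G$, where $\gamma$ is a topological generator. By Pontryagin duality, $(M_G)^\vee = (M^\vee)^G = \ker(\gamma-1\mid M^\vee)$, and dually $(M^G)^\vee = (M^\vee)_G$. The finiteness of $M^G$ means $M^\vee/(\gamma-1)M^\vee$ is finite, so $\gamma - 1$ is surjective (hence bijective) on the finite-dimensional $\Q_p$-vector space $M^\vee \otimes \Q_p$; consequently $\ker(\gamma-1\mid M^\vee)$ is contained in the $\Z_p$-torsion of $M^\vee$, which is finite. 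So $M_G$ is finite, and the base case is done.

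\textbf{Inductive step.} Assume the claim for dimension $l-1$, and let $G \cong \Z_p^l$ with $l \geq 2$. By Lemma~\ref{lem:zpext}, there exists an intermediate $\Z_p$-extension $L/K$ such that $M^H$ is finite, where $H = \Gal(K_\infty/L) \cong \Z_p^{l-1}$ and $\Gamma := G/H \cong \Z_p$. Consider the Hochschild--Serre spectral sequence
\[ E_2^{p,q} = H^p(\Gamma, H^q(H,M)) \;\Longrightarrow\; H^{p+q}(G,M). \]
Since $\Gamma \cong \Z_p$ has $p$-cohomological dimension one, $E_2^{p,q} = 0$ for $p \geq 2$; hence all higher differentials vanish and the spectral sequence yields short exact sequences
\[ 0 \longrightarrow H^1\bigl(\Gamma, H^{i-1}(H,M)\bigr) \longrightarrow H^i(G,M) \longrightarrow H^0\bigl(\Gamma, H^i(H,M)\bigr) \longrightarrow 0. \]

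The induction hypothesis applies to $H$ acting on $M$ (noting that $M^H$ is finite, while the $p$-rank and $p$-torsion conditions on $M$ are intrinsic), giving $H^j(H,M)$ finite for every $j \geq 1$; moreover $M^H$ itself is finite by the choice of $L$. Thus each term $H^p(\Gamma, H^q(H,M))$ appearing in the sequences for $i = 1, 2$ is the cohomology of a pro-cyclic pro-$p$ group acting on a finite module, which is finite. It follows that $H^1(G,M)$ and $H^2(G,M)$ are finite.

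The main obstacle is really a bookkeeping one: verifying that the Hochschild--Serre spectral sequence degenerates at $E_2$ thanks to the one-dimensionality of $\Gamma$, and then checking that the inductive hypothesis is applicable at each stage. Note that the base case $l = 1$ cannot be avoided, since Lemma~\ref{lem:zpext} is only stated for $l \geq 2$; the duality argument above is what makes this first step go through.
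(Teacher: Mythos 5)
Your proof is correct and follows the same high-level strategy as the paper — induction on $l$ with base case $l=1$, using Lemma~\ref{lem:zpext} to produce a rank-one quotient $\Gamma = G/H$ — but it actually repairs a small inaccuracy in the paper's inductive step. The paper invokes the sequence $H^i(G/H,M^H)\to H^i(G,M)\to H^i(H,M)$ as exact for $i=1,2$; this is correct for $i=1$ (classical inflation-restriction) but not for $i=2$: the kernel of restriction $H^2(G,M)\to H^2(H,M)$ is $H^1(G/H, H^1(H,M))$, not the image of the inflation map from $H^2(G/H,M^H)$. Since $H^2(G/H,M^H)=0$ here, the paper's sequence for $i=2$ would claim that restriction is injective on $H^2(G,M)$, which is not generally true. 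Your approach via the full Hochschild--Serre spectral sequence — which degenerates at $E_2$ because $\mathrm{cd}_p(\Gamma)=1$ and yields the genuine short exact sequences $0 \to H^1(\Gamma, H^{i-1}(H,M)) \to H^i(G,M) \to H^0(\Gamma, H^i(H,M)) \to 0$ — is the rigorous version of the same idea and reaches the same conclusion because $M^H$, $H^1(H,M)$, $H^2(H,M)$ are all finite by the inductive hypothesis.

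The base case is also handled by a different (equally valid) mechanism. The paper appeals to Herbrand quotients to bound $H^1$ and $H^2$ by $|M^G|$; you instead observe that $H^i(\Z_p,M)=0$ for $i\ge 2$ since $\mathrm{cd}_p(\Z_p)=1$, and show $H^1(\Z_p,M)=M_G$ is finite by Pontryagin duality (the finiteness of $(M^\vee)_G$ forces $\gamma-1$ to be invertible on $M^\vee\otimes\Q_p$, so $(M^\vee)^G$ lies in the $\Z_p$-torsion of $M^\vee$). Both base-case arguments work; the duality route has the advantage of making explicit why $H^2$ vanishes rather than merely being bounded.
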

\begin{proof}
   We prove the claim via induction on $l$. If $l=1$, then the theory of Herbrand quotients tells us that $H^1(G,M)$ and $H^2(G,M)$ are bounded by the size of $M^{G}$, which is finite by assumption.
   Let us now assume that the claim has already been proved for all $\Z_p^{l-1}$-extensions. Consider a $\Z_p^l$-extension with Galois group $G$. By Lemma \ref{lem:zpext} we can find a $\Z_p$-extension $L$ of $K$ such that $M(L)$ is finite. Let $H=\Gal(K_\infty/L)$. Then $H$ has a well-defined action on $M$ and $M^H$ is finite. So we can apply the induction hypothesis on $H$ and obtain that $H^1(H,M)$ and $H^2(H,M)$ are finite. Using the exact sequences $(1\le i\le 2)$
   \[H^i(G/H,M^H)\to H^i(G,M)\to H^i(H,M)\]
   reduces the proof to showing that $H^i(G/H,M^H)$ is finite. But this is just the case $l=1$ as $G/H\cong \Z_p$ and $(M^H)^{G/H}=M^G$ is finite.
\end{proof}
\begin{lemma}
\label{ldifferentp}
Let $A$ be an abelian variety defined over a finite extension $K$ of $\Q_l$ for some $l\neq p$. Let $K_{\infty}/K$ be a uniform $p$-adic Lie extension. Then the following two assertions hold.
\begin{itemize}
    \item[(a)] If $K_\infty/K$ is a $\Z_p$-extension, then $\vert H^1(G_n,A(K_\infty)[p^\infty])\vert $ is bounded independently of $n$.
    \item[(b)] If $K_{\infty}/K$ is a $\Z_p\ltimes \Z_p$-extension, then $v_p(\vert H^1(G_n,A(K_\infty)[p^\infty])\vert=O(n)$ and $\dim_{\F_p}(H^1(G_n,A(K_\infty)[p^\infty])[p])=O(1)$.
\end{itemize}
\end{lemma}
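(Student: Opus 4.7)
The plan is to analyse $M=A(K_\infty)[p^\infty]$, a cofinitely generated discrete $\Z_p$-module of corank at most $2d$ (where $d=\dim A$) carrying a continuous $G=\Gal(K_\infty/K)$-action. Decompose $M=D+F$ where $D\subseteq M$ is the maximal divisible subgroup, abstractly isomorphic to $(\Q_p/\Z_p)^{r}$ for some $r\le 2d$, and $F=M/D$ is a finite abelian group of fixed cardinality. Throughout I use that a procyclic pro-$p$ group $\Gamma\cong\Z_p$ has $p$-cohomological dimension $1$, so $H^{1}(\Gamma,N)=N/(\gamma-1)N$ and $H^{2}(\Gamma,N)=0$ for every discrete $p$-torsion $\Gamma$-module $N$.

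For part (a), the long exact $G_n$-cohomology sequence associated to $0\to D\to M\to F\to 0$ shows that it suffices to prove $H^{1}(G_n,D)=D/(\gamma_n-1)D=0$ for every $n$: then $H^{1}(G_n,M)\cong H^{1}(G_n,F)$ is a quotient of the fixed finite group $F$, hence of bounded cardinality. Pontryagin-dualising, this vanishing is equivalent to $\gamma_n-1$ being injective on the torsion-free $\Z_p$-module $T_p(D)=T_p(A)^{G_{K_\infty}}$, i.e.\ to $\gamma$ having no $p^{n}$-th root of unity as an eigenvalue on $V_p(D)=V_p(A)^{G_{K_\infty}}$. The residue-characteristic hypothesis $l\ne p$ enters via Grothendieck's monodromy theorem: after a finite base change $A$ acquires semi-stable reduction, so $V_p(A)$ admits a weight filtration whose graded pieces carry Frobenius eigenvalues of complex absolute value $q^{w/2}$ for $w\in\{0,1,2\}$. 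The inertia-fixed subspace $V_p(A)^{I_K}$ receives contributions only from weights $1$ and $2$ (the weight-$0$ piece sitting in the cokernel of the monodromy operator $N$), and their eigenvalues have absolute value $\sqrt q$ or $q$ and are never roots of unity. The main technical hurdle is the ramified case, where $G_{K_\infty}\not\supseteq I_K$ and a careful argument is needed to exclude weight-$0$ eigenvalues from $V_p(A)^{G_{K_\infty}}$.

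For part (b), the uniform pro-$p$ group $G\cong\Z_p\ltimes\Z_p$ has dimension $2$, and each $G_n=G^{p^{n}}$ retains this shape. Choose a normal $\Z_p$-subgroup $H_n\triangleleft G_n$ with $G_n/H_n\cong\Z_p$; since both subquotients have $p$-cohomological dimension $1$, Hochschild--Serre collapses to the short exact sequence
\[ 0\to H^{1}(G_n/H_n,M^{H_n})\to H^{1}(G_n,M)\to H^{1}(H_n,M)^{G_n/H_n}\to 0. \]
The eigenvalue argument of part~(a) applies verbatim to the $H_n$-action on $M$, bounding $H^{1}(H_n,M)$ by the constant $|F|$. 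For the left-hand term, $M^{H_n}=A(K_\infty^{H_n})[p^\infty]$ has $p$-adic valuation $\Ok(n)$ by Lemma~\ref{growth-condition} applied to the $\Z_p$-extension $K_\infty/K_\infty^{H_0}$, while its $\F_p$-rank is bounded by $\dim_{\F_p}A[p]=2d$ uniformly in $n$. Applying the part~(a) argument to the $G_n/H_n$-action on $M^{H_n}$ then gives $|H^{1}(G_n/H_n,M^{H_n})|\le p^{\Ok(n)}$ with $p$-rank bounded independently of $n$, yielding both $v_p(|H^{1}(G_n,M)|)=\Ok(n)$ and $\dim_{\F_p}H^{1}(G_n,M)[p]=\Ok(1)$.
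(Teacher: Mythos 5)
Your approach is genuinely different from the paper's: the paper disposes of part (a) by citing Lim's control theorem, and for part (b) observes that $K_\infty$ is the maximal pro-$p$ extension of a local field of residue characteristic $l\neq p$, so it admits no further $p$-extensions. By inflation–restriction this gives $H^1(G_n,M)\cong H^1(K_n,A[p^\infty])\cong H^1(K_n,A)[p^\infty]$, and local Tate duality then identifies this with $(A^t(K_n)[p^\infty])^\vee$, a finite group whose growth is controlled by Lemma~\ref{growth-condition}. You instead try a direct cohomological argument via a divisible/finite dévissage and a Frobenius-eigenvalue computation on $V_p(A)^{G_{K_\infty}}$, together with Hochschild--Serre in part (b).

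There are, however, two concrete problems. First, in part (a) you explicitly flag the ramified case as the ``main technical hurdle'' and leave it unresolved; that is a gap you cannot leave open. (In fact it is vacuous --- for $l\neq p$ local class field theory forces the $p$-part of $\mathcal{O}_K^\times$ to be finite, so the \emph{unique} $\Z_p$-extension of $K$ is the unramified one and $G_{K_\infty}\supseteq I_K$ always --- but you would need to say this rather than defer it.) Second, and more seriously, in part (b) the assertion that ``the eigenvalue argument of part (a) applies verbatim to the $H_n$-action on $M$, bounding $H^1(H_n,M)$ by the constant $|F|$'' is false. In the $\Z_p\ltimes\Z_p$ setting, $H$ is the tame ramification subgroup of $G$, and a generator $\tau$ of $H$ acts on $V_p(A)$ through inertia, not Frobenius: after a semistable base change it acts unipotently, with eigenvalue $1$. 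Concretely, take $E$ a split multiplicative elliptic curve over $K$ with $p\mid q-1$: then $D=E[p^\infty]\cong(\Q_p/\Z_p)^2$ with $\tau_n-1=\begin{pmatrix}0&p^n\ast_\tau\\0&0\end{pmatrix}$ and $\ast_\tau\in\Z_p^\times$, so $(\tau_n-1)D=\Q_p/\Z_p\oplus 0$ and $H^1(H_n,M)\cong\Q_p/\Z_p$, which is infinite, not bounded by $|F|$. (The sequence you set up is still correct --- finiteness of $H^1(G_n,M)$ is recovered because the outer $G_n/H_n$-invariants $H^1(H_n,M)^{G_n/H_n}$ are cut down by the tame twist $\phi\tau\phi^{-1}=\tau^q$, giving a multiplication by $q^{\pm p^n}-1$ which is prime-to-$p$ but $\equiv 0$ modulo $p^{v_p(q-1)+n}$ --- but that is a different computation, not the one you give.) The Frobenius-eigenvalue argument cannot be applied to the inertial $\Z_p$-subgroup, so your proof of (b) does not go through as written.
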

\begin{proof}
This proof is completely analogous to the proof of \cite[Proposition 4.1]{debanjana-lim}. We repeat it here for the convenience of the reader. 
Point (a) is a direct consequence of \cite[Lemma 3.4]{limcontrol}.
For point (b) note that $K_\infty$ does not have any non-trivial $p$-extensions (see \cite[Theorem~7.5.3]{nsw}). This implies by the inflation-restriction exact sequence that \[H^1(G_n,A(K_\infty)[p^\infty])\cong H^1(K_n, A[p^\infty])\cong H^1(K_n, A)[p^\infty],\]
where the last isomorphism is due to the fact that $l\neq p$. Using local Tate duality (see \cite[Corollary 3.4]{Mil06}), the last cohomology group is isomorphic to $(A^t(K_n))^\vee[p^\infty]$, where $A^t$ stands for the dual variety. Using again that $l\neq p$, we obtain that it suffices to consider $(A^t(K_n)[p^\infty])^\vee$. This group is clearly finite. As our extension is uniform, the first estimate is immediate from Lemma \ref{growth-condition}. The second estimate in point (b) follows from \cite[lemma 2.1]{debanjana-lim}, since $\dim_{\F_p}(H^1(G_n, \F_p)) = l$ for every $n$ by (the proof of) \cite[Lemma~2.1]{greenberg03}.  
\end{proof}

\begin{lemma}\cite[Proposition 4.3]{debanjana-lim} and \cite[Lemma~3.4]{limcontrol} \label{lemma:Z_p^l}
Let $K$ be a finite extension of $\Q_p$ and assume that $K_\infty/K$ is a $\Z_p^l$-extension. Then 
\[v_p(\vert H^1(G_n, A(K_\infty)[p^\infty])\vert)=O(n).\]
If $l=1$, then we get $O(1)$ instead of $O(n)$.
\end{lemma}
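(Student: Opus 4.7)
The strategy is to induct on $l$, with the base case $l=1$ forming the heart of the argument and higher ranks reduced via inflation--restriction.

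For the base case $l=1$, we have $G_n \cong \Z_p$ acting continuously on the discrete $\Z_p$-module $M := A(K_\infty)[p^\infty]$, which is cofinitely generated of $\Z_p$-corank at most $2\dim(A)$ since it injects into $A[p^\infty]$. Because $K_n$ is a finite extension of $\Q_p$, the group $A(K_n)$ is a compact $p$-adic Lie group and in particular $M^{G_n} = A(K_n)[p^\infty]$ is finite. Pontryagin-dualising, $M^\vee$ becomes a finitely generated $\Z_p[[\Gamma]]$-module with $\Gamma \cong \Z_p$, and the problem reduces to bounding the coinvariants $M_{G_n}$ or, equivalently, the cokernel of multiplication by $\omega_n(T) = (1+T)^{p^n} - 1$ on $M^\vee$. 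The Iwasawa-theoretic structure theorem together with a Herbrand-quotient argument (as carried out in \cite[Lemma~3.4]{limcontrol}) shows that this cokernel is bounded uniformly in $n$, yielding the $O(1)$ bound.

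For the inductive step, assume the bound for all $\Z_p^{l-1}$-extensions of local fields, and let $K_\infty/K$ be a $\Z_p^l$-extension. Choose a closed subgroup $H \subseteq G$ with $G/H \cong \Z_p$ and $H \cong \Z_p^{l-1}$, and put $H_n = H \cap G_n$; this is normal in $G_n$ since $G$ is abelian. The inflation--restriction exact sequence
\[
0 \longrightarrow H^1(G_n/H_n, M^{H_n}) \longrightarrow H^1(G_n, M) \longrightarrow H^1(H_n, M)
\]
reduces the estimate to bounding the outer terms. For the rightmost term, apply the inductive hypothesis to the $\Z_p^{l-1}$-extension $K_\infty/L_n$, where $L_n = K_\infty^{H_n}$: this gives $v_p(|H^1(H_n, M)|) = O(1)$ as a function of the level in the new tower (which is $0$), and one needs to check that the implied constants remain bounded as $L_n$ varies with $n$. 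For the leftmost term, $M^{H_n} = A(L_n)[p^\infty]$ has $v_p$-size $O(n)$ by the local analogue of Lemma~\ref{growth-condition} (whose proof carries over verbatim to finite extensions of $\Q_p$), and the $G_n/H_n \cong \Z_p$-cohomology of a module of that size is again $O(n)$ by the base case. Adding the two contributions yields the desired $v_p(|H^1(G_n, M)|) = O(n)$.

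The main obstacle is precisely this uniformity: one must check that the base-case bound $O(1)$ applies uniformly in $n$ to the shifting intermediate subfields $L_n$, so that the inductive hypothesis does not amplify the bound from $O(n)$ to $O(n^l)$. Concretely, the $\Z_p[[G]]$-module structure of $M^\vee$ must restrict compatibly to $\Z_p[[H_n]]$-data of bounded complexity, so that the characteristic-ideal analysis underlying the $l=1$ case produces constants independent of $n$. This uniformity, rather than the cohomological input itself, is the delicate technical content of \cite[Proposition~4.3]{debanjana-lim}.
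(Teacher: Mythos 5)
Your induction has a genuine gap at the inflation term, and it is more basic than the uniformity issue you flag at the end.

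You set $H_n = H \cap G_n = H^{p^n}$ and $L_n = K_\infty^{H_n}$. Note that $L_n$ is \emph{not} a finite extension of $K$: since $G/H_n \cong (\Z/p^n\Z)^{l-1}\times \Z_p$, the field $L_n$ contains the entire $\Z_p$-extension $K_\infty^H$ of $K$. Consequently $M^{H_n} = A(L_n)[p^\infty]$ is, a priori, a cofinitely generated but possibly \emph{infinite} $\Z_p$-module; Lemma~\ref{growth-condition} (and its local analogue) bounds $v_p(|A(K_n)[p^\infty]|)$ only for the finite layers $K_n$, and does not apply to $L_n$. Indeed $M^{H_n}$ will be all of $M$ whenever the Galois action on $M$ factors through $G/H$, so there is no hope of an $O(n)$ bound on $|M^{H_n}|$ in general. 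As a result, the estimate for the inflation term $H^1(G_n/H_n, M^{H_n})$ is not justified: you would need a base-case statement for a $\Z_p$-quotient acting on a \emph{varying cofinitely generated} module $M^{H_n}$, with control that is uniform in $n$, rather than an application of the fixed-module $l=1$ case to a sequence of finite modules. Your separately flagged uniformity concern about the restriction term $H^1(H_n, M)$ is also real, but it is secondary to this one: even granting that uniformity, the argument as written still fails at the inflation term.

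The underlying difficulty is that the cohomology of an abelian variety's torsion over a $\Z_p^l$-local tower is not reducible layer-by-layer in the naive inflation--restriction manner; this is why the paper cites the result from \cite[Proposition~4.3]{debanjana-lim} rather than proving it. A working route must exploit that $M^\vee$ is finitely generated over $\Z_p$ (hence a ``nice'' torsion $\Z_p[[G]]$-module) together with the finiteness of $M^{G_n} = A(K_n)[p^\infty]$ for all $n$ coming from the local-field hypothesis, and then estimate $H^1(G_n, M)$ directly from the $\Z_p[[G]]$-module structure of $M^\vee$ (e.g.\ via a Koszul-complex computation of $H_1(G_n, M^\vee)$), not by induction on $l$ through the intermediate infinite fields $L_n$.
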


The above sequence of lemmas allows us us to prove the following control theorem for arbitrary abelian varieties and $\Z_p^l$-extensions.

\begin{thm} \label{thm:control1} 
Let $K$ be a number field and let $K_\infty$ be a $\Z_p^l$-extension of $K$.  If $p=2$ we assume that $K$ is totally imaginary. Then the kernel and the cokernel of the natural homomorphism
\[r_n \colon \textup{Sel}_{0,A}(K_n)\to \textup{Sel}_{0,A}(K_\infty)^{G_n}\] 
are finite. Moreover, $v_p(\vert\ker(r_n)\vert)=O(n)$ and $v_p(\vert \coker(r_n)\vert)=O(p^{(l-1)n})$. 
\end{thm}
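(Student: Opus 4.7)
The proof follows the standard control theorem paradigm via the Snake Lemma. Write $M := A[p^\infty]$ viewed as a discrete $G_K$-module. The defining exact sequences of $\Sel_{0,A,\Sigma}(K_n)$ and $\Sel_{0,A,\Sigma}(K_\infty)$ fit into a commutative diagram whose three vertical maps are the restriction maps $r_n$ (on Selmer groups), $s_n\colon H^1(K_\Sigma/K_n, M) \to H^1(K_\Sigma/K_\infty, M)^{G_n}$, and $t_n$ (the product of local restriction maps on the $\Sigma$-cohomology). The Snake Lemma then yields
\[ 0 \to \ker(r_n) \to \ker(s_n) \to \ker(t_n) \to \coker(r_n) \to \coker(s_n), \]
and the task reduces to estimating the sizes of $\ker(s_n)$, $\coker(s_n)$ and $\ker(t_n)$.

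For the global map $s_n$, Hochschild-Serre inflation-restriction identifies $\ker(s_n)$ with $H^1(G_n, A(K_\infty)[p^\infty])$ and produces an injection $\coker(s_n) \hookrightarrow H^2(G_n, A(K_\infty)[p^\infty])$. Since $A(K_n)[p^\infty]$ is finite with $v_p$-order $\Ok(n)$ by Lemma~\ref{growth-condition}, Lemma~\ref{cohomologyisfinite} gives the finiteness of these cohomology groups. A quantitative refinement of its inductive proof, with a Herbrand-quotient estimate at the base case $l = 1$ (where $|H^i(G_n, N)|$ is controlled by $|N^{G_n}|$ for $\Z_p$-torsion $N$ of finite corank), yields the size bound $v_p(|H^i(G_n, A(K_\infty)[p^\infty])|) = \Ok(n)$ for $i \in \{1, 2\}$. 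In particular $v_p(|\ker(r_n)|) = \Ok(n)$.

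For the local map $t_n$, I decompose $\ker(t_n)$ as a direct sum over $v \in \Sigma$ and over primes $w \mid v$ in $K_n$, and apply inflation-restriction prime by prime: each summand is identified with $H^1(D_{n,w}, A((K_\infty)_{w'})[p^\infty])$, where $D_{n,w} \subseteq G_n$ is the decomposition subgroup at some prime $w' \mid w$ of $K_\infty$. If $v$ splits completely in $K_\infty/K$ then $D_v$ is trivial and the local kernel vanishes; otherwise $\dim(D_v) = d_v \geq 1$, the number of primes of $K_n$ above $v$ is $\Ok(p^{(l-d_v)n})$, and Lemmas~\ref{ldifferentp} and~\ref{lemma:Z_p^l} bound each local $H^1$ by $\Ok(1)$ when $d_v = 1$ and by $\Ok(n)$ when $d_v \geq 2$. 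A short case analysis (over $v \mid p$ versus $v \nmid p$, and over the value of $d_v$) shows that the sum is $\Ok(p^{(l-1)n})$, the factor $n$ being absorbed into $p^{(l-1)n}$ for $l \geq 2$ (for $l = 1$ both local factors and the prime count are $\Ok(1)$). Combined with the $\Ok(n)$ bound on $\coker(s_n)$, this gives $v_p(|\coker(r_n)|) = \Ok(p^{(l-1)n})$.

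The chief obstacle is the quantitative $\Ok(n)$ bound on $v_p(|H^i(G_n, A(K_\infty)[p^\infty])|)$: Lemma~\ref{cohomologyisfinite} only yields finiteness, and obtaining linear growth in $n$ requires carefully tracking how the inductive step of that lemma multiplies sizes, feeding in an explicit Herbrand-quotient input at the $\Z_p$-extension base case together with the $\Ok(n)$ growth of $|A(K_n)[p^\infty]|$ from Lemma~\ref{growth-condition}.
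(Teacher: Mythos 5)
Your Snake-lemma skeleton and your treatment of the local map $t_n$ (decomposing over $v\in\Sigma$ and over the split primes in $K_n$, applying Lemmas~\ref{ldifferentp} and~\ref{lemma:Z_p^l}, and balancing the prime counts against the local cohomology sizes) match the paper. The divergence — and the gap — is exactly where you flag it: the quantitative $\Ok(n)$ bound on $v_p(|H^i(G_n,A(K_\infty)[p^\infty])|)$ for $i=1,2$. You propose to extract this by making the induction in Lemma~\ref{cohomologyisfinite} quantitative via a Herbrand-quotient input at $l=1$, but you do not carry this out, and there are real obstacles: for $i=2$ the three-term ``inflation--restriction'' sequence $H^2(G/H,M^H)\to H^2(G,M)\to H^2(H,M)$ used in Lemma~\ref{cohomologyisfinite} is not exact, so a size estimate must come from the full Lyndon--Hochschild--Serre filtration and will involve the extra term $H^1(G/H, H^1(H,M))$, whose growth in $n$ has to be controlled as well; moreover, at each inductive step the subgroup provided by Lemma~\ref{lem:zpext} introduces a new ``layer'' module $M^{p^nH}=A(\widehat L_n)[p^\infty]$ whose $\Ok(n)$ growth (via Lemma~\ref{growth-condition}) must be fed back through the recursion, and the bookkeeping to show the accumulated constants stay uniform in $n$ is non-trivial.

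The paper avoids all of this: it keeps Lemma~\ref{cohomologyisfinite} only for the qualitative finiteness of $H^i(G_n,A(K_\infty)[p^\infty])$, and obtains the size bound $\Ok(n)$ by an entirely different decoupling. Namely, the $p$-rank of $H^i(G_n,A(K_\infty)[p^\infty])$ is bounded independently of $n$ (by \cite[Lemma~2.1]{debanjana-lim}, a cohomological statement about uniform pro-$p$ groups), while the exponent of these groups is bounded by $p^{ln+d}$ for a constant $d$ (via \cite[Lemma~2.1.1]{Lim-Liang} together with \cite[Proposition~1.9.1]{nsw}, using that $A(K_\infty)[p^\infty]^\vee$ is finitely generated over $\Z_p[[G]]$). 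Bounded rank times exponent $p^{\Ok(n)}$ gives $v_p=\Ok(n)$ immediately, with no induction and no Herbrand computation. You should either adopt this rank--exponent argument or genuinely complete the quantitative refinement you sketch, including the $H^2$ filtration; as written the key estimate is asserted rather than proved.

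One small additional remark: for $l=1$ your assembly would give only $v_p(|\coker(r_n)|)=\Ok(n)$ from $\coker(s_n)$, whereas the claimed bound is $\Ok(p^{(l-1)n})=\Ok(1)$; this is rescued by noting $\coker(s_n)\hookrightarrow H^2(G_n,A(K_\infty)[p^\infty])=0$ when $G_n\cong\Z_p$, which you should make explicit.
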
 
\begin{proof} Let $\Sigma$ be a finite set of primes of $K$ which contains both $\Sigma_p$ and $\Sigma_{\textup{br}}(A)$.
Consider the commutative diagram 
\[\begin{tikzcd}[scale cd = 0.85] 
0\arrow[r]&\textup{Sel}_{0,A}(K_n)\arrow[r]\arrow[d,"r_n"]&H^1(K_{\Sigma}/K_n,A[p^\infty])\arrow[r]\arrow[d,"h_n"]& \bigoplus_{v\in \Sigma(K_n)}H^1({K_{n,v}},A[p^\infty])\arrow[d,"g_n"] \\0 \arrow[r]&\textup{Sel}_{0,A}(K_\infty)^{G_n}\arrow[r]&H^1(K_{\Sigma}/K_\infty,A[p^{\infty}])^{G_n}\arrow[r]& \bigoplus_{v\in \Sigma(K_\infty)}H^1({K_{\infty,v}},A[p^{\infty}])\end{tikzcd}
\]
We start by estimating the kernel of $g_n$. By the restriction-inflation exact sequence we see that
\[\ker(g_n)=\bigoplus_{v\in \Sigma(K_n)}H^1(G_{n,v},A(K_{\infty,v})[p^\infty]). \]
If $v$ is totally split in $K_\infty/K$, then $v$ does not contribute to the $\ker(g_n)$. If $v$ lies above a place whose decomposition group has dimension $1$, then the contribution of each term is uniformly bounded independent of $n$ by Lemma \ref{lemma:Z_p^l}. Over each such $v\in \Sigma$ there are $\mathcal{O}(p^{n(l-1)})$ places in $\Sigma(K_n)$ and we obtain in total a contribution of $\mathcal{O}(p^{n(l-1)})$. If the decomposition group has dimension at least $2$, we consider necessarily a place above $p$. By Lemma \ref{lemma:Z_p^l} the contribution is of size $\mathcal{O}(n)$ in this case and there are $\mathcal{O}(p^{n(l-2)})$ primes of $K_n$ above each such prime. In total we obtain that the order of the kernel of $g_n$ is of size $\mathcal{O}(p^{n(l-1)})$. It remains to estimate the kernel and cokernel of $h_n$. Using again the inflation-restriction exact sequence, it suffices to consider $H^1(G_n, A(K_\infty)[p^\infty])$ and $H^2(G_n, A(K_\infty)[p^\infty])$. 
Both groups are finite by Lemma \ref{cohomologyisfinite} and by \cite[Lemma 2.1]{debanjana-lim} the $p$-ranks are bounded independently of $n$.
Note that $A(K_\infty)[p^\infty]^\vee$ is finitely generated over $\Z_p\llbracket G\rrbracket $. Using \cite[Lemma 2.1.1]{Lim-Liang} together with \cite[Proposition 1.9.1]{nsw}\footnote{This lemma is stated only for finite groups, but it can be generalised to pro-p groups easily.}, we can bound the exponent of the torsion subgroup of both cohomology groups by $p^{ln+d}$ for some constant $d$. So we obtain that $v_p(|\ker(h)|)$ and $v_p(|\coker(h)|)$ are of size $\mathcal{O}(n)$. From this both claims are immediate.
\end{proof}

Even though we will apply only the above control theorem in our main results, it is relatively easy to prove a similar result for multi-false Tate extensions. We include this result since it may be of independent interest. 

We say that an extension $K_\infty/K$ is a \emph{multi-false Tate extension} if it is of the form $K_\infty=K(\mu_{p^\infty},\sqrt[p^\infty]{\alpha_1},\dots \sqrt[p^\infty]{\alpha_l})$ for elements $\alpha_i\in K$ and has Galois group $\Z_p\ltimes \Z_p^l$ over $K$. 
In analogy with the work of Kundu and Lim we obtain the following local result. 
\begin{lemma}\cite[Proposition 4.5]{debanjana-lim}
\label{multifalsetata}
Let $K$ be a finite extension of $\Q_p$. Assume that $K_\infty$ is a multi-false Tate extension. Assume that $A$ is of potentially good reduction at all places above $p$. Then 
\[v_p(\vert H^1(G_n, A(K_\infty)[p^\infty])\vert)=O(1).\]
\end{lemma}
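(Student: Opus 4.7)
The strategy I would follow is the one pursued in \cite{debanjana-lim}. The argument splits naturally into three steps.

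\emph{Step 1 (Reduction to good reduction).} By the hypothesis of potentially good reduction, there exists a finite extension $K'/K$ over which $A$ attains good reduction. Passing to $K'$ changes the $p$-valuation of $H^1(G_n, A(K_\infty)[p^\infty])$ only by a bounded constant: the difference is controlled by cohomology of the finite group $\Gal(K_n K'/K_n)$, whose order divides $[K':K]$ independently of $n$. Thus we may assume $A$ has good reduction over $K$.

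\emph{Step 2 (Finiteness of $A(K_\infty)[p^\infty]$).} The technical heart of the proof is to show that $M := A(K_\infty)[p^\infty]$ is finite. A theorem of Imai asserts that for an abelian variety of good reduction over a finite extension of $\Q_p$, the $p$-power torsion over the cyclotomic $\Z_p$-extension $K(\mu_{p^\infty})$ is already finite. To extend this conclusion to the multi-false Tate extension, one shows that the additional Kummer elements $\sqrt[p^\infty]{\alpha_i}$ cannot contribute new $p$-power torsion to $A$: under good reduction the Tate module of $A$ corresponds to a crystalline Galois representation, whereas the characters of $\Gal(K_\infty/K(\mu_{p^\infty}))$ acting on the Kummer generators are not crystalline. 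Hence these characters cannot embed into $A[p^\infty]$, and $M = A(K(\mu_{p^\infty}))[p^\infty]$ is finite.

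\emph{Step 3 (Bounding the cohomology).} Once $M$ is finite, the $G$-action on $M$ factors through a finite quotient of $G$, and consequently $G_n$ acts trivially on $M$ for all sufficiently large $n$. For such $n$ we have
\[ H^1(G_n, M) \;=\; \textup{Hom}_{\textup{cont}}(G_n, M), \]
whose order is bounded by $|M|^{l+1}$ because $G_n$ is topologically generated by $l+1$ elements. For the finitely many small $n$, $H^1(G_n, M)$ is again finite of bounded size. Combining these estimates yields the desired $O(1)$ bound.

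I expect the main obstacle to be Step 2, namely the finiteness of $A(K_\infty)[p^\infty]$ in the multi-false Tate setting. Imai's theorem handles the cyclotomic direction, but ruling out new torsion arising from the Kummer tower requires a careful Galois-theoretic or $p$-adic Hodge-theoretic analysis along the lines carried out in \cite{debanjana-lim}. Steps~1 and~3 are comparatively routine once this key finiteness statement is in hand.
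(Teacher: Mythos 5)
Your two-stage strategy -- first establish finiteness of $A(K_\infty)[p^\infty]$, then bound $H^1(G_n, \cdot)$ for a finite module -- is precisely the route the paper takes. The paper simply cites Kubo--Taguchi \cite[Theorem~1.1]{kubo-taguchi} to obtain the finiteness (their result handles potentially good reduction directly, so your Step~1 reduction is not needed), and then invokes a generalisation of Kundu--Lim's Proposition~4.2 for the cohomology bound, which is exactly your Step~3; your Step~2 is in effect a sketch of the $p$-adic Hodge-theoretic argument underlying the cited Kubo--Taguchi theorem.
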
 
\begin{proof}
By \cite[Theorem 1.1]{kubo-taguchi} we can conclude that $A(K_\infty)[p^\infty]$ is finite. The claim then follows from a generalization of \cite[Proposition 4.2]{debanjana-lim} for abelian varieties.
\end{proof}
As a consequence we obtain the following control theorem.
\begin{thm} \label{thm:control2} 
Let $K$ be a number field, and let $K_\infty/K$ be a multi-false Tate extension of dimension $l$. Let $\Sigma$ be a finite set of primes of $K$ which contains both $\Sigma_p$, $\Sigma_{\textup{br}}(A)$ and the primes $\Sigma_{\textup{ram}}$ that ramify in $K_\infty/K$. If $p=2$ we assume that $K_\infty$ is totally imaginary. Assume that $A$ has potentially good reduction at all primes of $K$ above $p$. Then the kernel and the cokernel of the natural homomorphism
\[r_n \colon \textup{Sel}_{0,A}(K_n)\to \textup{Sel}_{0,A}(K_\infty)^{G_n}\] 
are finite. Moreover, $v_p(\vert\ker(r_n)\vert)=O(1)$ and $v_p(\vert \coker(r_n)\vert)=O(p^{(l-1)n})$. 

If the decomposition group of all primes in $\Sigma$ has dimension at least $2$, then $v_p(\vert \coker(r_n)\vert)=O(n p^{(l-2)n})$.
\end{thm}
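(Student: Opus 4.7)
The plan is to proceed in close parallel to the proof of Theorem~\ref{thm:control1}, forming the same commutative diagram with vertical maps $r_n$, $h_n$ and $g_n$ and applying the Snake Lemma to obtain $v_p(|\ker(r_n)|) \le v_p(|\ker(h_n)|)$ as well as $v_p(|\coker(r_n)|) \le v_p(|\ker(g_n)|) + v_p(|\coker(h_n)|)$. Only the input at each local and global cohomology step must be upgraded relative to Theorem~\ref{thm:control1}: Lemma~\ref{multifalsetata} replaces Lemma~\ref{lemma:Z_p^l} at the primes above $p$, and at the global level the potentially good reduction hypothesis combined with Kubo--Taguchi yields much stronger finiteness.

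First I would control $\ker(h_n)$ and $\coker(h_n)$ via the inflation-restriction sequence, which reduces the task to bounding $H^i(G_n, A(K_\infty)[p^\infty])$ for $i = 1, 2$. The decisive observation is that, under the assumption of potentially good reduction at $p$, \cite[Theorem~1.1]{kubo-taguchi} (used already in the proof of Lemma~\ref{multifalsetata}) guarantees that $A(K_{\infty,w})[p^\infty]$ is finite at each prime $w \mid p$; since $A(K_\infty)[p^\infty]$ injects into any such local completion, it is globally finite. The action of $G_n$ on a finite $p$-group is trivial for $n \gg 0$, and since $G_n^{\mathrm{ab}}$ is a finitely generated $\Z_p$-module of bounded rank (namely~$l$), one gets $v_p(|H^i(G_n, A(K_\infty)[p^\infty])|) = O(1)$ for $i = 1, 2$. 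This yields $v_p(|\ker(h_n)|), v_p(|\coker(h_n)|) = O(1)$.

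The substantive work is then the analysis of $\ker(g_n) = \bigoplus_{v \in \Sigma(K_n)} H^1(G_{n,v}, A(K_{\infty,v})[p^\infty])$. For $v \in \Sigma_p$, Lemma~\ref{multifalsetata} bounds each summand by $O(1)$. Since $K_\infty \supseteq K(\mu_{p^\infty})$ forces $\dim D_v(K_\infty/K) \ge 1$ for $v \mid p$, the number of primes of $K_n$ above $v$ is $O(p^{(l - \dim D_v) n}) \le O(p^{(l-1)n})$, improving to $O(p^{(l-2)n})$ under the additional hypothesis. For $v \in \Sigma \setminus \Sigma_p$, the local Galois group is at most two-dimensional (an unramified $\Z_p$ coming from the cyclotomic tower, possibly enriched by a tame Kummer $\Z_p$), so Lemma~\ref{ldifferentp} applies and yields an $O(1)$ or $O(n)$ bound per prime in the $\Z_p$- or $\Z_p\ltimes\Z_p$-case, respectively. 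Summing these contributions gives $v_p(|\ker(g_n)|) = O(p^{(l-1)n})$ in general, while under the stronger decomposition hypothesis the dominant contribution comes from the non-$p$ primes in the $\Z_p\ltimes\Z_p$-case and is $O(n \cdot p^{(l-2)n})$.

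The most delicate point I anticipate is ensuring that the local extensions at primes $v \in \Sigma \setminus \Sigma_p$ actually fall under the hypotheses of Lemma~\ref{ldifferentp}: if a decomposition group fails to have literally the shape $\Z_p$ or $\Z_p \ltimes \Z_p$, I would restrict to a suitable one- or two-dimensional sub-extension where the lemma applies and then transfer the local bound by a further short inflation-restriction step, absorbing the resulting discrepancy into an $O(1)$ error per prime that does not affect the final asymptotics. Collecting the bounds from the preceding paragraphs then yields exactly the claimed estimates.
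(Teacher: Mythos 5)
Your proposal mirrors the paper's own (brief) sketch: the same commutative diagram, Kubo--Taguchi for global finiteness of $A(K_\infty)[p^\infty]$, Lemma~\ref{multifalsetata} at $v\mid p$, Lemma~\ref{ldifferentp} at $v\nmid p$, and the same prime-counting in each dimension (and your observation that decomposition groups at $v\nmid p$ are automatically at most $\Z_p\rtimes\Z_p$ makes the worry in your final paragraph unnecessary). One small repair is needed in the $h_n$ step: the bounded $\Z_p$-rank of $G_n^{\mathrm{ab}}$ controls only $H^1$, not $H^2$; to get $v_p(|H^2(G_n, A(K_\infty)[p^\infty])|) = \Ok(1)$ the paper invokes \cite[Lemma~2.2]{debanjana-lim}, which uses the uniformity of $G_n$ (Lazard's bound on $\dim_{\F_p}H^2(G_n,\F_p)$) rather than the abelianization, and you would need that same input.
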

\begin{proof} 
The proof is very similar to the proof of Theorem~\ref{thm:control1}, so we will only sketch it here.
Using Imai's theorem and the generalisation due to Kubo and Taguchi \cite[Theorem 1.1]{kubo-taguchi} we see that $A(K_{\infty,v})[p^\infty]$ is finite for all places $v$ above $p$ in $K_\infty$. This implies that also $A(K_\infty)[p^\infty]$ is finite. So we can use \cite[Lemma 2.2]{debanjana-lim} to conclude that the orders of the kernel and cokernel of the maps $h_n$ (defined as in the proof of Theorem~\ref{thm:control1}) are uniformly bounded. It remains to estimate the kernel of $g_n$. Again the cardinality of the kernel of $g_n$ is bounded by $|\bigoplus_{v}H^1(G_{n,v},A(K_{\infty,v})[p^\infty])|$. If $v$ is a place not dividing $p$ and if the decomposition group has dimension $2$, then $|H^1(G_{n,v},A(K_{\infty,v})[p^\infty])|$ is of size $\mathcal{O}(n)$ according to Lemma \ref{ldifferentp}. If the decomposition group has dimension one then the size of $H^1(G_{n,v},A(K_{\infty,v})[p^\infty])$ is bounded by $\mathcal{O}(1)$. 
It remains to estimate the contribution of the primes above $p$. Using Lemma \ref{multifalsetata} we see that the contribution is $\mathcal{O}(p^{n(l-1)})$ if the decomposition group has dimension one and $\mathcal{O}(p^{n(l-2)})$ if the decomposition group has dimension at least $2$. 
\end{proof}

\section{Main theorems} \label{section:main} 
In this section, we prove our main results comparing the Iwasawa modules of ideal class groups and fine Selmer groups. We start with several results for very general uniform $p$-extensions. In the subsequent subsection, we restrict to $\Z_p^l$-extensions. 
\subsection{Uniform extensions} 
In order to prove our first comparison theorem, we need the following 
\begin{thm}\label{newrankthm}
  Let $A$ be an abelian variety of dimension $d$ defined over $K$, and let $\Sigma$ be a finite set of primes of $K$ containing $\Sigma_p$ and $\Sigma_{\textup{br}}(A)$. If $p = 2$, then we assume that $K$ is totally imaginary. Let $L/K$ be a finite extension which is contained in $K_\Sigma$ and assume that $A[p^k]\subseteq A(L)$. Then 
  \[\vert r_p^k(\textup{Sel}_{0,A}(L))-2dr_p^k(\textup{Cl}(L))\vert\le 2dk(1+2 |\Sigma(L)|) . \] 
\end{thm}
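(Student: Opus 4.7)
The plan is to introduce the $p^k$-fine Selmer group $\textup{Sel}_{0,A[p^k],\Sigma}(L)$ as a bridge between the fine Selmer group on one side and the ideal class group on the other. The target bound $2dk(1+2|\Sigma(L)|)$ will split naturally into two contributions -- one of size $2dk(1+|\Sigma(L)|)$ from Lemma \ref{lemma:lim-murty}, and one of size $2dk|\Sigma(L)|$ from Lemma \ref{lemma:vergleich_sigma_cl} -- combined by the triangle inequality. The first estimate is immediate after observing that $\textup{Sel}_{0,A[p^k],\Sigma}(L)$ is annihilated by $p^k$, so its $p^k$-rank equals $v_p$ of its cardinality.

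The crux is identifying $r_p^k(\textup{Sel}_{0,A[p^k],\Sigma}(L))$ with $2d \cdot r_p^k(\textup{Cl}_\Sigma(L))$. Because $A[p^k] \subseteq A(L)$, the $G_L$-module $A[p^k]$ is trivial and isomorphic to $(\Z/p^k\Z)^{2d}$ as an abelian group, so both the global and local Galois cohomology decompose coordinate-wise into $2d$ copies; hence
\[ \textup{Sel}_{0,A[p^k],\Sigma}(L) \cong \textup{Sel}_{0,\Z/p^k\Z,\Sigma}(L)^{2d}. \]
An element of $\textup{Sel}_{0,\Z/p^k\Z,\Sigma}(L)$ is a continuous homomorphism $\textup{Gal}(K_\Sigma/L) \to \Z/p^k\Z$ whose restriction to each decomposition subgroup at a prime of $\Sigma(L)$ vanishes. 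The corresponding abelian extension of $L$ is unramified outside $\Sigma$ and split at $\Sigma(L)$; since complete splitting forces unramifiedness, it is in fact unramified everywhere. By class field theory this identifies the group with $\textup{Hom}(Y_\Sigma(L), \Z/p^k\Z)$, whose cardinality equals $|Y_\Sigma(L)[p^k]|$, and hence its $p^k$-rank equals $r_p^k(Y_\Sigma(L)) = r_p^k(\textup{Cl}_\Sigma(L))$ since $Y_\Sigma(L)$ is the $p$-Sylow of $\textup{Cl}_\Sigma(L)$.

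With this identification in hand, Lemma \ref{lemma:vergleich_sigma_cl} replaces $\textup{Cl}_\Sigma(L)$ by $\textup{Cl}(L)$ at an extra cost of $k|\Sigma(L)|$, which multiplied by $2d$ combines with the estimate from Lemma \ref{lemma:lim-murty} via the triangle inequality to give precisely $2dk(1+|\Sigma(L)|) + 2dk|\Sigma(L)| = 2dk(1+2|\Sigma(L)|)$. The main subtle point is the class field theoretic identification: one must carefully observe that a homomorphism killing every decomposition subgroup at $\Sigma$ automatically cuts out an extension that is unramified at those primes, reducing the computation to $Y_\Sigma(L)$ rather than a larger ray class group. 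Everything else is bookkeeping with the elementary $p^k$-rank lemmas of Section \ref{section:5}.
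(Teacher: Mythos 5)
Your proposal is correct and follows essentially the same route as the paper's proof: use $\mathrm{Sel}_{0,A[p^k],\Sigma}(L)$ as the bridge, identify it via class field theory with $\mathrm{Hom}(\mathrm{Cl}_\Sigma(L), A[p^k])$ (hence of $p^k$-rank $2d\, r_p^k(\mathrm{Cl}_\Sigma(L))$), and then combine Lemma~\ref{lemma:lim-murty} and Lemma~\ref{lemma:vergleich_sigma_cl} by the triangle inequality. The only difference is cosmetic — you spell out the class-field-theoretic identification (decomposing $A[p^k]\cong(\Z/p^k\Z)^{2d}$ and noting that vanishing on decomposition groups at $\Sigma(L)$ cuts out an extension that is unramified and split there), where the paper merely asserts the isomorphism.
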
 
Note that only the right hand side depends on $\Sigma$. So we can improve this bound by choosing $\Sigma$ as minimal as possible. The dependence on $\Sigma$ on the right hand side comes from the fact that in the proof, we work with the group $\textup{Sel}_{0,A[p^k],\Sigma}$, which really depends on $\Sigma$.
\begin{proof}
For $k=1$ and $L$ being contained in a $\Z_p$-extension of $K$ this is \cite[Theorem 5.1]{lim-murty}. 
As $A[p^k]\subseteq A(L)$ we obtain that $$H^1(K_\Sigma/L,A[p^k])=\textup{Hom}(K_\Sigma/L,A[p^k])$$ as well as $H^1(L_{v},A[p^k])=\textup{Hom}(L_{v},A[p^k])$ for each prime $v$. It follows that
\[ \Sel_{0,A[p^k], \Sigma}(L)\cong \textup{Hom}(\textup{Cl}_\Sigma(L), A[p^k]). \]
As $r_p^k(\textup{Hom}(\textup{Cl}_\Sigma, A[p^k])=2dr_p^k(\textup{Cl}_\Sigma(L))$, the claim follows from Lemmas~\ref{lemma:vergleich_sigma_cl} and \ref{lemma:lim-murty}.
\end{proof}

In the following we use the above results to compare the rank and $\mu$-invariant of the fine Selmer group of an abelian variety over an admissible $p$-adic Lie extension $K_\infty$ of $K$ to the (rank and) $\mu$-invariant of the Iwasawa module arising from class groups. Let $G = \Gal(K_\infty/K)$. 
For an abelian variety $A$ defined over $K$, we 
define $r_{A}^{(K_\infty)} := \rg_{\Z_p\llbracket G\rrbracket }(Y^{(K_{\infty})}_{A})$. Let $E_{A} = \bigoplus_{i=1}^{s_{A}} \Z_p\llbracket G\rrbracket /(p^{e_i^{A}})$ be the elementary $\Z_p\llbracket G\rrbracket $-module associated to $Y_{A}^{(K_{\infty})}[p^{\infty}]$ via \cite[Theorem~3.40]{venjakob} and define  $$f_j^{A} =  \vert \{i\mid e_i^{A}>j\}\vert,$$ $j \in \N$. 

As $Y(K_\infty) = \varprojlim_n Y(K_n)$ is a Noetherian $\Z_p\llbracket G\rrbracket $-module, we can also define the invariant $f_j$ with respect to $Y(K_\infty)$: let $\bigoplus_{i=1}^s \Z_p\llbracket G\rrbracket /(p^{e_i})$ be the elementary $\Z_p\llbracket G\rrbracket $-module associated to $Y(K_\infty)[p^\infty]$, and write $$f_j = \vert \{i \mid e_i > j \} \vert,$$ $j \in \N$. Moreover,  we let $r^{(K_\infty)}$ be the $\Z_p\llbracket G\rrbracket $-rank of $Y(K_\infty)$. 

The following result restates Theorem~\ref{thm:A} from the \nameref{section:1}. 
\begin{thm}
\label{thm:comparing ranks}
  Let $A$ be an abelian variety of dimension $d$. Let $\Sigma$ be a finite set of primes containing $\Sigma_p$ and $\Sigma_{\textup{br}}(A)$. If $p = 2$, then we assume that $K$ is totally imaginary. Let $K_{\infty}/K$ be a uniform $p$-extension of dimension $l$ with Galois group $G$ which is $\Sigma$-ramified, i.e. contained in $K_\Sigma$, and such that none of the primes in $\Sigma$ is completely split in $K_{\infty}/K$. 
   Assume that ${A[p^k]\subseteq A(K_\infty)}$. Then \begin{equation}
   \label{eq:comparing ranks}
       k \cdot r_{A}^{(K_\infty)}+\sum_{i=0}^{k-1}f_i^{A}= 2d( \sum_{i=0}^{k-1} f_i).
   \end{equation} 
\end{thm}
\begin{proof}
Since no prime of $\Sigma$ is completely split in $K_\infty/K$, the $\Z_p\llbracket G\rrbracket $-module $Y(K_\infty)$ is in fact torsion, i.e. $r^{(K_\infty)} = 0$, by \cite[Theorem~6.1]{ochi-venjakob}. Without loss of generality we can assume that $A[p^k]
\subseteq A(K)$. 
Consider the natural map 
\[Y(K_\infty)_{G_n}/p^k Y(K_\infty)_{G_n} \longrightarrow Y(K_n)/p^k Y(K_n).\]  Let $Z_n$ and $W_n$ be the kernel and cokernel of this homomorphism. 
We will show that the $p$-valuation of the order of $Z_n$ and $W_n$ is $\Ok(p^{n(l-1)})$. 
 For each $m\ge n$ we consider the homomorphism
 \[Y(K_m)_{G_{m,n}}/p^k Y(K_m)_{G_{m,n}} \longrightarrow Y(K_n)/p^kY(K_n), \] 
 where $G_{m,n} = \Gal(K_m/K_n)$. 
 If $Z_{m,n}$ and $W_{m,n}$ denote the kernel and cokernel of this map, then ${Z_n = \varprojlim_m Z_{m,n}}$ and ${W_n = \varprojlim_m W_{m,n}}$. 
 
 Now fix $m \ge n$, and let $L$ be the maximal abelian extension of $K_n$ that is unramified over $K_m$. Then $$Y(K_m)_{G_{m,n}} \cong \Gal(L/K_m).$$ 
 Let $L'$ be the maximal unramified abelian extension of $K_n$. Then we have the following diagram of fields (where \emph{ur} means an \emph{unramified} extension): 
 \[ \xymatrix{K_m \ar@{-}^{ur}[r] & L \ar@{-}[d] \\ K_m \cap L' \ar@{-}[u] \ar@{-}[r] & L' \\ K_n \ar@{-}[ur]^{ur} \ar@{-}[u] & } \] 
 The kernel and the cokernel of the natural map 
 \[f_{m,n} \colon Y(K_m)_{G_{m,n}} \longrightarrow Y(K_n) \]
 are isomorphic to $\Gal(L/L'K_m)$ and $\Gal((L' \cap K_m)/K_n)$. The $p$-rank of the latter group is bounded independently of $n$ and $m$; since the exponent of the induced cokernel $W_{m,n}$ is at most $k$ and as the upper bounds for both the rank and the exponent do not depend on $n$ or $m$, we have shown that $v_p(|W_{m,n}|) = \Ok(1)$ and $v_p(|W_n|) = \Ok(1)$. 
 
 Moreover, if $H_{m,n}$ denotes the subgroup of $\Gal(L/K_n)$ generated by the inertia subgroups of the primes in $\Sigma(K_n)$, then $L' = L^{H_{m,n}}$. 
 As $L/K_m$ is unramified, we see that the inertia group of each prime in $\Sigma(K_n)$ is isomorphic to a subgroup of $\Gal(K_m/K_n)$, i.e. the $p$-rank of each inertia subgroup is bounded uniformly in $m$ and $n$. In particular, ${\rg_p(H_{m,n}) = \Ok(|\Sigma(K_n)|)}$ and 
 $$\tilde{r}^k_p(|\Gal(L/L'K_m)|) = \Ok(l\vert \Sigma(K_n)\vert) = \Ok(|\Sigma(K_n)|). $$  
 Hence, as no prime in $\Sigma$ is totally split in $K_{\infty}/K$, the $p$-valuation of this kernel is of size $\Ok(p^{n(l-1)})$ in view of \cite[Lemma~4.1]{limwildkernel}. This upper bound does not depend on $m$ and we can conclude that $v_p(|Z_n|)=\Ok(p^{n(l-1)})$ because 
 $$v_p(|Z_{m,n}|) \le \tilde{r}_p^k(\ker(f_{m,n})) + \tilde{r}_p^k(\coker(f_{m,n}))$$ 
 for each $m,n \in \N$, as $\vert \coker(f_{m,n})/p^k\coker(f_{m,n})\vert $ is a natural upper bound for $\vert Z_{m,n}/\pi(\ker(f_{m,n}))\vert$ (as in the proof of Lemma~\ref{lemma:ckmm1}). Here 
 $$\pi:Y(K_m)_{G_{m,n}} \longrightarrow Y(K_m)_{G_{m,n}}/p^k Y(K_m)_{G_{m,n}} $$ 
 denotes the canonical surjection.  

Therefore $r^k_p(Y(K_n)) = \tilde{r}_p^k(Y(K_n))$ satisfies \begin{eqnarray} \label{eq:6.4} 
 |r^k_p(Y(K_n)) - v_p(|Y(K_\infty)_{G_n}/ p^k Y(K_\infty)_{G_n}|)| = \Ok(p^{n(l-1)}).\end{eqnarray} 
Let $E$ be the elementary module associated to $Y(K_\infty)[p^\infty]$. Then $E$ is of the form 
\[\bigoplus_{i=1}^s\Z_p\llbracket G\rrbracket /(p^{e_i}). \]
by \cite[Theorem~3.40]{venjakob}. It follows from Theorem~\ref{thm:perbet+} that 
\[ |v_p(|Y(K_\infty)_{G_n}/p^k Y(K_\infty)_{G_n}|) - \sum_{i=0}^{k-1} f_i \cdot p^{nl}| = \Ok(k p^{n(l-1)}).\] 
Using Theorem \ref{newrankthm} and the fact that $\vert \Sigma(K_n)\vert =\Ok(p^{n(l-1)})$, we obtain
$$\bigl| r^k_p(\textup{Sel}_{0,A}(K_n))-2 d (\sum_{i=0}^{k-1}f_ip^{nl})\bigl |=\Ok(p^{n(l-1)}).$$
\begin{lemma} \label{lemma:katharinas_control_thm} 
  Let $K_\infty/K$ be a uniform $\Sigma$-ramified $p$-extension with Galois group $G$ of dimension $l$, and let $A$ be an abelian variety defined over $K$. Assume that no prime in $\Sigma$ is totally split in $K_\infty/K$. Then 
  $$\vert r^k_p(\textup{Sel}_{0,A}(K_{\infty})^{G_n})-r^k_p(\textup{Sel}_{0,A}(K_{n}))\vert=\Ok(k p^{n(l-1)}),$$ 
  where the implicit constant does not depend on $n$ or $k$. 
\end{lemma} 
\begin{proof} 
  It follows from \cite[Lemma~2.1]{greenberg03} that $v_p(|H^1(G_n, \Z/p\Z|) = l$ for every ${n \in \N}$, and that $ v_p(|H^2(G_n, \Z/p\Z)|) = \mathcal{O}(1)$. Therefore the $p$-rank of 
  $$H^i(G_n, A(K_{\infty})[p^\infty])[p]$$ for $1\le i \le 2$ is uniformly bounded by \cite[Lemma 2.1]{debanjana-lim}. Consider
  \[\begin{tikzcd}[scale cd = 0.85] 
  0\arrow[r]&\textup{Sel}_{0,A}(K_n)\arrow[r]\arrow[d,"r_n"]&H^1(K_{\Sigma}/K_n,A[p^\infty])\arrow[r,"\rho"]\arrow[d,"h_n"]& \bigoplus_{v\in \Sigma(K_n)}H^1({K_{n,v}},A[p^\infty])\arrow[d,"g_n"] \\0 \arrow[r]&\textup{Sel}_{0,A}(K_\infty)^{G_n}\arrow[r]&H^1(K_{\Sigma}/K_\infty,A[p^{\infty}])^{G_n}\arrow[r]& \bigoplus_{v\in \Sigma(K_\infty)}H^1({K_{\infty,v}},A[p^{\infty}])\end{tikzcd}
  \]
 By Lemma \ref{lemma-ranks}, it suffices to compute $r_p^k(\ker(r_n))$ and $r_p^k(\coker(r_n))$ to estimate $$\vert r^k_p(\textup{Sel}_{0,A}(K_{\infty})^{G_n})-r^k_p(\textup{Sel}_{0,A}(K_{n}))\vert.$$ 
  As $r_p^k(\ker(r_n))\le r_p^k(\ker(h_n))=r_p^k(H^1(G_n,A(K_\infty)[p^\infty])=\mathcal{O}(k)$, where the implicit constant does not depend on $k$ or $n$, it remains to consider $\coker(r_n)$. Consider the sequence
  \[0\to W/\rho(\ker(h_n))\to \coker(r_n)\to\coker(h_n),\]
  where $W\subseteq \ker(g_n)$. Again using Lemma \ref{lemma-ranks} we see that $$\vert r_p^k(W/\Image(\ker(h_n))-r_p^k(\coker(r_n))\vert \le r_p^k(\coker(h_n))=\mathcal{O}(k).$$ 
  As $r_p^k(\ker(g_n))=\mathcal{O}(k p^{(l-1)n})$ ($\mathcal{O}(k)$ for every decomposition group of dimension at least $1$), we obtain indeed
  $\vert r^k_p(\textup{Sel}_{0,A}(K_{\infty})^{G_n})-r^k_p(\textup{Sel}_{0,A}(K_{n}))\vert=\mathcal{O}(p^{n(l-1)})$
\end{proof} 
Therefore  
$$\Bigl| r^k_p(\textup{Sel}_{0,A}(K_{\infty})^{G_n})-2d\left(\sum_{i=0}^{k-1}f_ip^{nl}\right)\Bigr|=\Ok(p^{n(l-1)}).$$
In particular, 
$$\Bigl| \tilde{r}_p^k((Y^{(K_{\infty})}_{A})_{G_n}-2d\left(\sum_{i=0}^{k-1}f_ip^{nl}\right)\Bigr|=\Ok(p^{n(l-1)}). $$  Applying Theorem~\ref{thm:perbet+} to $Y^{(K_{\infty})}_{A}$ yields 
$$\Bigl| \tilde{r}_p^k((Y^{(K_{\infty})}_{A})_{G_n} -\sum_{i=0}^{k-1}f_i^{A}p^{nl}-kp^{nl}r_{A}^{(K_\infty)} \Bigr| =\Ok(p^{n(l-1)}).$$
Comparing the last two equations, we obtain the claim of the theorem.
\end{proof}

We mention several consequences of our first comparison result. 
\begin{cor} \label{cor:weak-Leo}
  Let $A/K$ and $\Sigma$ be as in Theorem~\ref{thm:comparing ranks}. Let $K_\infty/K$ be a uniform $\Sigma$-ramified $p$-extension of dimension $l$, $l \ge 1$, with Galois group $G$. We assume that none of the primes in $\Sigma$ is completely split in $K_\infty/K$. 
  
  Suppose that $A[p^k] \subseteq A(K_\infty)$ for some $k > \max_i e_i$, where $\bigoplus_{i=1}^s \Z_p\llbracket G\rrbracket /(p^{e_i})$ is the elementary module attached to $Y(K_\infty)$. Then $r_{A}^{(K_\infty)} = 0$ and $f_i^{A}=2df_i$ for all $i$.
\end{cor}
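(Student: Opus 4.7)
The plan is to apply Theorem~\ref{thm:comparing ranks} at two consecutive values of the parameter and subtract. Note that the hypothesis $A[p^k]\subseteq A(K_\infty)$ trivially implies $A[p^v]\subseteq A(K_\infty)$ for every $v\le k$, so the theorem is applicable not only at $v = k$ but also at every smaller $v$. Writing the identity at $v$ and at $v+1$ (both $\le k$) and subtracting one from the other telescopes the cumulative sums and yields the much cleaner pointwise relation
\[
r_{A,\Sigma}^{(K_\infty)} + f_v^{A,\Sigma} \;=\; 2d\, f_v, \qquad 0\le v\le k-1.
\]

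Next I would exploit the hypothesis $k > \max_i e_i$. By its very definition $f_j = |\{i : e_i > j\}|$ is nonincreasing in $j$ and vanishes as soon as $j\ge \max_i e_i$; in particular $f_{k-1} = 0$. Substituting $v = k-1$ into the displayed identity gives $r_{A,\Sigma}^{(K_\infty)} + f_{k-1}^{A,\Sigma} = 0$, and since both summands are nonnegative integers, each must vanish. This proves the first assertion $r_{A,\Sigma}^{(K_\infty)} = 0$ and, as a by-product, $f_{k-1}^{A,\Sigma} = 0$.

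Feeding $r_{A,\Sigma}^{(K_\infty)} = 0$ back into the identity yields $f_v^{A,\Sigma} = 2d\, f_v$ for every $v$ with $0\le v\le k-1$. To extend the equality to indices $v\ge k$, I would observe that $f_v^{A,\Sigma}$ is also nonincreasing in $v$ (same counting definition), so the vanishing $f_{k-1}^{A,\Sigma} = 0$ forces $f_v^{A,\Sigma} = 0$ for all $v\ge k-1$; together with $f_v = 0$ for $v\ge k > \max_i e_i$, this establishes $f_v^{A,\Sigma} = 2d\, f_v$ for every $v\ge 0$.

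There is no substantive obstacle once Theorem~\ref{thm:comparing ranks} is in hand: the corollary is a purely combinatorial extraction of pointwise information from cumulative-sum identities by telescoping, and the hypothesis $k > \max_i e_i$ supplies the one numerical input needed to kill the rank term. The only subtlety worth flagging is the implicit use of the theorem at two consecutive parameters, which must be justified by the observation that the torsion assumption propagates downwards in $k$.
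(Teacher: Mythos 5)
Your proposal is correct and follows essentially the same route as the paper's proof: apply Theorem~\ref{thm:comparing ranks} at consecutive parameter values, subtract (telescope) to get the pointwise identity $r_{A,\Sigma}^{(K_\infty)} + f_v^{A,\Sigma} = 2d\,f_v$ for $0\le v\le k-1$, use the hypothesis $k>\max_i e_i$ to kill $f_{k-1}$ and hence force $r_{A,\Sigma}^{(K_\infty)}=0$, and then read off the second claim. You are actually slightly more careful than the paper's one-line argument: after subtracting the identities at $k$ and $k-1$, the surviving term is $f_{k-1}^{A,\Sigma}$ (as you write), not $f_k^{A,\Sigma}$ as the paper's proof text has it, and your remark that the torsion hypothesis propagates downward in $k$ --- so that the cumulative identity holds at every $v\le k$ --- is the small implicit step that makes the telescoping legitimate.
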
 
\begin{proof} 
  Applying Theorem~\ref{thm:comparing ranks} with $k$ and $k-1$ yields the same expression on the right hand side of Equation \eqref{eq:comparing ranks}. Thus, $r_{A}^{(K_\infty)} = f_k^{A} = 0$. If we now apply  Theorem~\ref{thm:comparing ranks} for $1\le i\le k-1$ successively, we obtain $f_i^A=2df_i$ for $1\le i\le k-1$. 
\end{proof} 
\begin{rem}
  For example, the condition on $A[p^k]\subseteq A(K_\infty)$ is automatically satisfied for the trivialising extension $K_\infty=K(A[p^\infty])$.
\end{rem}

The property $r_{A}^{(K_\infty)} = 0$, i.e. the module $Y_{A}^{(K_\infty)}$ being torsion as a $\Z_p\llbracket G\rrbracket $-module, is often referred to as the \emph{weak Leopoldt conjecture for $A$ over $K_\infty$} (see also \cite[Lemma~7.1]{Lim17} for equivalent formulations of this conjecture). 
\begin{rem}
It should not be a surprise that the weak Leopoldt conjecture holds in the situation described in Corollary~\ref{cor:weak-Leo}. In fact, Coates and Sujatha have proved in \cite[Lemmas~2.4 and 3.1]{coates-sujatha} 
that $r_{A}^{(K_\infty)} = 0$ holds for an elliptic curve $A = E$ whenever $K_\infty$ is admissible (in particular, it should contain the cyclotomic $\Z_p$-extension of $K$) and $A(K_\infty)$ contains $A[p^\infty]$. For general abelian varieties, Ochi and Venjakob have shown in (the proof of) \cite[Corollary~4.8]{ochi-venjakob2} that $r_{A}^{(K_\infty)} = 0$ for $K_\infty = K(A[p^\infty])$. Note that the cyclotomic $\Z_p$-extension of $K$ is automatically contained in $K_\infty$ if $A[p^\infty] \subseteq A(K_\infty)$ (this follows from the Weil pairing, as explained in the proof of \cite[Corollary~4.8]{ochi-venjakob2}). 

We can prove that $r_{A}^{(K_\infty)}=0$ holds in slightly more generality, since we only need $A[p^k]\subseteq A(K_{\infty})$. Moreover, the decomposition hypothesis will hold if $K_\infty$ contains the cyclotomic $\Z_p$-extension of $K$, but this latter condition is not necessary.  
\end{rem}

On the other hand, if the weak Leopoldt conjecture \emph{fails} for $A$ over some uniform $p$-extension, then this would have certain interesting consequences: as a further application of Theorem~\ref{thm:comparing ranks}, we describe a setting where non-trivial (and in fact arbitrarily large) $\mu$-invariants might occur. The main hypothesis is related to the failure of the weak Leopoldt conjecture (see also Remark~\ref{rem:leo} below). 
\begin{cor} \label{cor:muunbounded} 
Let $A$ be an abelian variety of dimension $d$ defined over $K$ and let $\Sigma$ be a finite set of primes containing $\Sigma_{\textup{br}}(A)$ and $\Sigma_p$. If $p=2$ then we assume that $K$ is totally imaginary. Let $K_{\infty}/K$ be a uniform $\Sigma$-ramified $p$-extension such that none of the primes in $\Sigma$ is completely split in $K_{\infty}/K$. Assume that $A[p]\subseteq A(K_\infty)$ and that ${r := \rg_{\Z_p\llbracket G\rrbracket }(Y_{A}^{(K_\infty)}) \ge 1}$. Let $L^{(n)}=K(A[p^n])$ and $L_{\infty}^{(n)}=K_{\infty}L^{(n)}$. Then ${\mu(L_{\infty}^{(n)}/L^{(n)})}$ becomes arbitrarily large.
\end{cor}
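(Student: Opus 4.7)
The plan is to apply Theorem~\ref{thm:comparing ranks} to $L_{\infty}^{(n)}/L^{(n)}$ with $k = n$, since $A[p^n] \subseteq A(L^{(n)}) \subseteq A(L_\infty^{(n)})$ by construction. First I would verify that the hypotheses transfer from $K_\infty/K$: the Galois group $\Gal(L_\infty^{(n)}/L^{(n)}) \cong \Gal(K_\infty/K_\infty \cap L^{(n)})$ is an open subgroup of $G$; the extension is $\Sigma$-ramified; and no prime of $L^{(n)}$ above $\Sigma$ can be totally split in $L_\infty^{(n)}$, since the corresponding prime of $K$ would then split completely in $K_\infty/K$. If $\Gal(L_\infty^{(n)}/L^{(n)})$ fails to be uniform, I would first enlarge $L^{(n)}$ by a suitable $K_s$; this is harmless as it can only increase $\mu(L_\infty^{(n)}/L^{(n)})$.

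The theorem then gives
\[ n \cdot r_{A,\Sigma}^{(L_\infty^{(n)})} + \sum_{i=0}^{n-1} f_i^{A,\Sigma}(L_\infty^{(n)}) = 2d \sum_{i=0}^{n-1} f_i(L_\infty^{(n)}) \le 2d \cdot \mu(L_\infty^{(n)}/L^{(n)}), \]
using $\sum_{i \ge 0} f_i = \mu$. So it will suffice to establish a lower bound $r_{A,\Sigma}^{(L_\infty^{(n)})} \ge 1$ independent of $n$, which will yield ${\mu(L_\infty^{(n)}/L^{(n)}) \ge n/(2d)}$ and hence the claim.

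To obtain this lower bound I would exploit the restriction map $\Sel_{0,A,\Sigma}(K_\infty) \to \Sel_{0,A,\Sigma}(L_\infty^{(n)})$, whose kernel is contained in $H^1(\Gal(L_\infty^{(n)}/K_\infty), A[p^\infty])$ and is thus finite. Dualising, I obtain a $\Z_p[[H_n]]$-equivariant homomorphism $Y_{A,\Sigma}^{(L_\infty^{(n)})} \to Y_{A,\Sigma}^{(K_\infty)}$ whose image has finite index, where $H_n := \Gal(L_\infty^{(n)}/L^{(n)})$ is identified with its image in $G$ under restriction. The image therefore has $\Z_p[[H_n]]$-rank equal to $\rg_{\Z_p[[H_n]]}(Y_{A,\Sigma}^{(K_\infty)}) = [G:H_n] \cdot r$, whence $r_{A,\Sigma}^{(L_\infty^{(n)})} \ge [G:H_n] \cdot r \ge r \ge 1$.

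The main obstacle is the rank-comparison step: one has to verify carefully that the Selmer restriction is equivariant with respect to the Galois-theoretic identification of $H_n$ with a subgroup of $G$, and that the finite kernel of the Selmer map genuinely translates (via duality) into a finite-index image at the Iwasawa-module level. A secondary technical point is the uniformity hypothesis required by Theorem~\ref{thm:comparing ranks}, which is handled by the enlargement remark in the first paragraph.
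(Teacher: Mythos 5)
Your argument is correct and follows essentially the same strategy as the paper's own proof: apply Theorem~\ref{thm:comparing ranks} over $L_\infty^{(n)}/L^{(n)}$ with $k = n$, then show $r_{A,\Sigma}^{(L_\infty^{(n)})} \ge r$ via the finite kernel of a Selmer restriction map. The only cosmetic difference is that you compare $L_\infty^{(n)}$ directly with $K_\infty$ while the paper passes inductively through $L_\infty^{(n-1)}$; you are also slightly more explicit than the paper about the uniformity hypothesis required to invoke Theorem~\ref{thm:comparing ranks}.
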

\begin{proof}
By hypothesis we have $A[p^n]\subseteq A(L^{(n)})$ for each $n \in \N$. Let $M'$ be any number field such that $L^{(n)}\subseteq M'\subseteq L^{(n)}_{\infty}$ and let $M=L^{(n-1)}_{\infty}\cap M'$. Then $$|\Gal(M'/M)|\le |\Gal(L_\infty^{(n)}/L_\infty^{(n-1)})|. $$ 
In particular, the order of the kernel of the restriction map $\textup{Sel}_{0,A}(M)\longrightarrow\textup{Sel}_{0,A}(M')$ is uniformly bounded as one runs over the number fields $M' \subseteq L_\infty^{(n)}$, and therefore the dualised map  $Y_{A}^{L^{(n)}_{\infty}}\longrightarrow Y_{A}^{L^{(n-1)}_{\infty}}$ has a uniformly bounded cokernel. Hence, $\rg_{\Z_p\llbracket G\rrbracket }(Y_{A}^{(L_\infty^{(n)})}) \ge r$ for every $n \in \N$. Theorem \ref{thm:comparing ranks} and the definition of the invariants $f_i^{A}$ imply that 
\[ 2d \mu(L_{\infty}^{(n)}/L^{(n)})\ge 2d\sum_{i=0}^{n-1}f_i^{A} + nr \ge nr.\]
\end{proof}
\begin{rem} \label{rem:leo} 
  It seems remarkable that we obtain lower bounds for the $\mu$-invariants of ideal class groups from a hypothesis on the fine Selmer groups. In the literature, we have seen so far only arguments deriving lower bounds for (fine) Selmer groups from information about ideal class groups, as in \cite{kundu-mu}. 
  
  Note that the $\Z_p\llbracket G \rrbracket $-rank $r$ of the fine Selmer group is larger than 0 if and only if the \emph{weak Leopoldt conjecture} of $A$ over $K_\infty$ fails (see \cite[Lemma~7.1]{Lim17}). No example is known where this happens. 
\end{rem} 

Finally, Theorem~\ref{thm:comparing ranks} can be used in order to compare the Selmer groups of two different abelian varieties in suitable settings. For example, the following corollary applies to congruent abelian varieties in the sense of \cite{KM21}. 
\begin{cor} 
  Let $A_1$ and $A_2$ be two abelian varieties defined over $K$, let $\Sigma$ be a finite set of primes containing $\Sigma_p$ and both $\Sigma_{\textup{br}}(A_i)$, $i = 1,2$. If $p = 2$, then we assume that $K$ is totally imaginary. Suppose that $K_\infty/K$ is a uniform $\Sigma$-ramified $p$-extension such that no prime of $\Sigma$ is completely split in $K_\infty/K$.
  
  Assume further that $A_1[p^k] \subseteq A_1(K_\infty)$ and $A_2[p^k] \subseteq A_2(K_\infty)$. Then 
 $$vr_{A_1}^{(K_\infty)}+\sum_{i=0}^{v-1}f_i^{A_1}=  vr_{A_2}^{(K_\infty)}+\sum_{i=0}^{v-1}f_i^{A_2}$$ for all $v\le k$. If in addition $r_{A_1}^{(K_\infty)}=r_{A_2}^{(K_\infty)}$, then we can deduce that $f_i^{A_1}=f_i^{A_2}$ for all $i
 \le k-1$. 
\end{cor}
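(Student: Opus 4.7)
The plan is to apply Theorem~\ref{thm:comparing ranks} separately to $A_1$ and to $A_2$. The hypotheses of that theorem are satisfied for each $A_j$ by assumption: $\Sigma$ contains $\Sigma_p$ and $\Sigma_{\textup{br}}(A_j)$, no prime of $\Sigma$ splits completely in $K_\infty/K$, and $A_j[p^k] \subseteq A_j(K_\infty)$ (and if $p=2$ we have $K$ totally imaginary). Writing $d$ for the common dimension of $A_1$ and $A_2$ and $f_i = f_i(Y(K_\infty))$ for the invariants attached to the elementary module for $Y(K_\infty)[p^\infty]$ (these depend only on $K_\infty/K$, not on either $A_j$), Theorem~\ref{thm:comparing ranks} yields, for every $v \le k$ and $j \in \{1,2\}$,
\[ v \cdot r_{A_j, \Sigma}^{(K_\infty)} + \sum_{i=0}^{v-1} f_i^{A_j, \Sigma} = 2d \sum_{i=0}^{v-1} f_i. \]

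The crucial observation is that the right-hand side is independent of $j$: it depends only on the arithmetic of $K_\infty/K$ and on the common dimension $d$, not on the specific abelian variety. Equating the two resulting expressions for $j=1$ and $j=2$ therefore immediately yields the first claimed equality
\[ v r_{A_1, \Sigma}^{(K_\infty)} + \sum_{i=0}^{v-1} f_i^{A_1, \Sigma} = v r_{A_2, \Sigma}^{(K_\infty)} + \sum_{i=0}^{v-1} f_i^{A_2, \Sigma} \]
for every $v \le k$.

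For the second part, impose the additional assumption $r_{A_1, \Sigma}^{(K_\infty)} = r_{A_2, \Sigma}^{(K_\infty)}$. Cancelling the now-equal rank terms $v r_{A_j,\Sigma}^{(K_\infty)}$ on both sides of the first claim reduces it to $\sum_{i=0}^{v-1} f_i^{A_1, \Sigma} = \sum_{i=0}^{v-1} f_i^{A_2, \Sigma}$ for every $v \le k$. Taking the difference of this identity at $v+1$ and at $v$ isolates $f_v^{A_1, \Sigma} = f_v^{A_2, \Sigma}$ for each $0 \le v \le k-1$, which is the second claim.

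There is no real obstacle here: this is a direct application of Theorem~\ref{thm:comparing ranks}. The only substantive content is the remark that the \lq\lq reference datum'' $2d \sum_{i=0}^{v-1} f_i$ is an invariant of the extension $K_\infty/K$ together with the dimension, so the theorem can be used to transfer structural information between any two abelian varieties of the same dimension whose $p^k$-torsion is rational over $K_\infty$.
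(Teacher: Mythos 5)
Your proof is correct and is exactly the argument the paper intends (no proof is supplied in the paper for this corollary): apply Theorem~\ref{thm:comparing ranks} to each $A_j$ with $k$ replaced by $v \le k$, equate the common right-hand side $2d\sum_{i=0}^{v-1} f_i$, and then telescope to isolate the individual $f_i^{A_j,\Sigma}$. One point worth making explicit: the argument requires $\dim(A_1) = \dim(A_2)$, which you assume tacitly by speaking of ``the common dimension $d$''; the corollary as stated omits this hypothesis, but it is forced by the surrounding context (congruent abelian varieties in the sense of \cite{KM21} necessarily share a dimension), so your silent assumption is the right reading.
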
 

Now we prove a weak version of Theorem~\ref{thm:comparing ranks} for number fields $L$ which satisfy a weaker hypothesis than ${A[p^k] \subseteq A(L)}$, using the following lemma which generalises a result of Lim and Murty (see \cite[Lemma~4.3]{lim-murty}). Let $\Sigma$ be a finite set of primes of $K$ which contains the primes above $p$ and the primes of bad reduction of $A$. 
\begin{lemma} \label{lemma:murty_ungl} 
  Let $A$ be an abelian variety of dimension $d$, defined over a number field $L \supseteq K$, and let $\Sigma$ be as above. Then 
  $$ \rg^k_p(\Sel_{0,A}(L)) \ge \rg^k_p(\textup{Cl}_\Sigma(L)) \cdot \rg_p(p^{k-1}A(L)[p^\infty]) - 2kd.$$ 
\end{lemma}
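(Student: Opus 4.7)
The plan is to adapt the approach of \cite[Lemma~4.3]{lim-murty}, building classes in $\Sel_{0,A,\Sigma}(L)[p^k]$ out of characters of the $\Sigma$-ideal class group combined with the $L$-rational $p^k$-torsion of $A$, and then to bookkeep the resulting kernel losses carefully so that the final error is precisely $2kd$. First I would set $N := A(L)[p^k]$, which is a $\Gal(K_\Sigma/L)$-submodule of $A[p^k]$ on which the action is trivial. Any $\chi \in \textup{Hom}(\textup{Cl}_\Sigma(L), N)$ pulls back via the surjection $\Gal(K_\Sigma/L) \twoheadrightarrow \Gal(H_\Sigma(L)/L) \cong Y_\Sigma(L)$ to a cocycle $c_\chi \in H^1(K_\Sigma/L, N)$, which I then push to a class in $H^1(K_\Sigma/L, A[p^\infty])[p^k]$ via the inclusions $N \hookrightarrow A[p^k] \hookrightarrow A[p^\infty]$. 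Since $\chi$ factors through $\textup{Cl}_\Sigma(L)$ and every prime of $\Sigma$ splits completely in $H_\Sigma(L)/L$, the local restriction of $c_\chi$ at each place $w \in \Sigma(L)$ vanishes, so $c_\chi$ lies in $\Sel_{0,A,\Sigma}(L)[p^k]$. This construction defines a natural homomorphism $\Phi : \textup{Hom}(\textup{Cl}_\Sigma(L), N) \to \Sel_{0,A,\Sigma}(L)[p^k]$.

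Next I would estimate the size of the source. Writing $A(L)[p^\infty] \cong \bigoplus_{i=1}^r \Z/p^{a_i}\Z$ for elementary divisors $a_1, \ldots, a_r$, a direct calculation with multiplication by $p^{k-1}$ identifies $m := \rg_p(p^{k-1}A(L)[p^\infty])$ with $\#\{i : a_i \ge k\}$, and $N \cong \bigoplus_i \Z/p^{\min(a_i,k)}\Z$. Using the standard identity $|\textup{Hom}(\textup{Cl}_\Sigma(L), \Z/p^j\Z)| = |\textup{Cl}_\Sigma(L)[p^j]|$ valid for every $j$, I then obtain
\[ r_p^k(\textup{Hom}(\textup{Cl}_\Sigma(L), N)) \;=\; \sum_{i=1}^r r_p^{\min(a_i,k)}(\textup{Cl}_\Sigma(L)) \;\ge\; m \cdot r_p^k(\textup{Cl}_\Sigma(L)), \]
by retaining only the $m$ summands with $a_i \ge k$.

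The crux of the argument, and the main obstacle, is controlling the kernel of $\Phi$, since a naive choice of $N$ would yield an error of $4kd$ rather than $2kd$. Factoring $\Phi$ through $H^1(K_\Sigma/L, N) \to H^1(K_\Sigma/L, A[p^k]) \to H^1(K_\Sigma/L, A[p^\infty])$, the long exact sequences coming from $0 \to N \to A[p^k] \to A[p^k]/N \to 0$ and $0 \to A[p^k] \to A[p^\infty] \xrightarrow{p^k} A[p^\infty] \to 0$ bound the two successive kernels in $p^k$-rank by $r_p^k(A[p^k]/N)$ and $r_p^k(A(L)[p^\infty]/p^k A(L)[p^\infty])$, respectively. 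Combining the general identity $|G/p^kG| = |G[p^k]|$ for finite abelian $p$-groups (applied to $G = A(L)[p^\infty]$) with the additivity of $p^k$-ranks for groups killed by $p^k$, these two bounds sum to exactly
\[ r_p^k(A[p^k]/A(L)[p^k]) + r_p^k(A(L)[p^k]) \;=\; r_p^k(A[p^k]) \;=\; 2dk. \]
The specific choice $N = A(L)[p^k]$ is what makes the two error contributions complementary rather than additive.

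Combining the three steps via Lemma~\ref{lemma-ranks} applied to the exact sequence $0 \to \ker\Phi \to \textup{Hom}(\textup{Cl}_\Sigma(L), N) \to \Image(\Phi) \to 0$ and the inclusion $\Image(\Phi) \subseteq \Sel_{0,A,\Sigma}(L)[p^k]$ then yields
\[ r_p^k(\Sel_{0,A,\Sigma}(L)) \;\ge\; r_p^k(\textup{Hom}(\textup{Cl}_\Sigma(L), N)) - 2dk \;\ge\; m \cdot r_p^k(\textup{Cl}_\Sigma(L)) - 2dk, \]
which is the assertion of the lemma.
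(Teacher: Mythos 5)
Your proof is correct, and it lands on the same $2dk$ bound as the paper's, but the route through the bookkeeping is different enough to be worth pointing out. The paper first proves an abstract auxiliary lemma estimating $r_p^k(H^1(G,M))$ for a pro-$p$ group $G$ acting on a cofinitely generated module $M$, namely
\[
h_k(G)\,r_p(p^{k-1}M^G)-r_p^k\bigl((M/M^G)^G\bigr)\le r_p^k\bigl(H^1(G,M)\bigr),\qquad h_k(G)=v_p(|H^1(G,\Z/p^k\Z)|),
\]
and then applies it with $G=\Gal(H_\Sigma/L)$ and $M=A(H_\Sigma)[p^\infty]$, entering the fine Selmer group via inflation--restriction on the full comparison diagram and bounding the error term $r_p^k\bigl((M/M^G)^G\bigr)$ crudely by $r_p^k(A[p^\infty])=2dk$ because it is a subquotient of $A[p^\infty]$. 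You instead bypass the diagram and the auxiliary lemma by constructing the map $\Phi$ from $\textup{Hom}(\textup{Cl}_\Sigma(L),N)$ directly, with the carefully chosen $N=A(L)[p^k]$, verify membership in the fine Selmer group by hand from the total splitting of $\Sigma$ in $H_\Sigma/L$, and track the kernel loss through the two exact sequences $0\to N\to A[p^k]\to A[p^k]/N\to 0$ and $0\to A[p^k]\to A[p^\infty]\xrightarrow{p^k} A[p^\infty]\to 0$. The payoff of your version is that the two error contributions, $2dk-v_p(|N|)$ and $v_p(|N|)$, are visibly complementary, so the $2dk$ appears as an identity rather than as an upper bound on a Galois-invariant subquotient; the payoff of the paper's version is a reusable lemma in terms of $h_k(G)$ for an arbitrary pro-$p$ group, which is cleaner to quote. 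Both arguments share the essential inputs: the split Hilbert class field $H_\Sigma$, the elementary-divisor description of $A(L)[p^\infty]$ to identify $r_p(p^{k-1}A(L)[p^\infty])$, and the identification of $h_k(\Gal(H_\Sigma/L))$ with $r_p^k(\textup{Cl}_\Sigma(L))$.
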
 
Of course this inequality is non-trivial only if ${A(L)[p^k]\setminus A(L)[p^{k-1}] \ne \{0\}}$, but this condition is much weaker than $A[p^k] \subseteq A(L)$. For example, we could consider an abelian variety of large dimension which has complex multiplication. We have seen in Corollary~\ref{cor:pntorsion} that there exists a multiple $\Z_p$-extension $K_\infty'$ over a potentially slightly larger field $K'$ which contains $A[p^\infty]$, and that we may in fact assume that $A[p^n] \subseteq A(K_n')$ for every $n \in \N$. But then it is possible to realise an infinite submodule of $A[p^\infty]$ over a multiple $\Z_p$-extension of relatively small dimension. For example, it is possible to choose a single $\Z_p$-extension of $K'$ such that the $n$-th layer of this $\Z_p$-extension contains some point in $A[p^n] \setminus A[p^{n-1}]$ for each $n \in \N$. 

In order to prove the above inequality we need the following auxiliary result. 
\begin{lemma}
Let $G$ be a pro-p group and $M$ a discrete $G$-module which is cofinitely generated over $\Z_p$. 
Let $h_k(G)=v_p(\vert H^1(G,\Z/p^k\Z)\vert)$. Then we have the following inequality:
\[h_k(G)r_p(p^{k-1}M^G)-r_p^k((M/M^G)^G)\le r_p^k(H^1(G,M)).\]
\end{lemma}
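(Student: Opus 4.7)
The plan is to start from the short exact sequence
\[ 0 \longrightarrow M^G \longrightarrow M \longrightarrow M/M^G \longrightarrow 0 \]
and take its long exact sequence of continuous $G$-cohomology. Since the induced map $M^G = (M^G)^G \longrightarrow M^G$ is the identity, we obtain an exact sequence
\[ 0 \longrightarrow (M/M^G)^G \longrightarrow H^1(G,M^G) \longrightarrow H^1(G,M). \]
Write $I$ for the image of $H^1(G,M^G)$ in $H^1(G,M)$. Applying Lemma~\ref{lemma-ranks} to the short exact sequence $0 \to (M/M^G)^G \to H^1(G,M^G) \to I \to 0$ (with $Z = 0$) yields
\[ r_p^k(I) \ge r_p^k(H^1(G,M^G)) - r_p^k((M/M^G)^G), \]
and since $I \hookrightarrow H^1(G,M)$, Lemma~\ref{lem:ranksunderepimorphism} gives $r_p^k(H^1(G,M)) \ge r_p^k(I)$.

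The remaining task is to bound $r_p^k(H^1(G,M^G))$ from below by $h_k(G)\,r_p(p^{k-1}M^G)$. Because $G$ acts trivially on $M^G$, we have $H^1(G,M^G) = \textup{Hom}_{\textup{cont}}(G,M^G)$, and its $p^k$-torsion is $\textup{Hom}_{\textup{cont}}(G,M^G[p^k])$. The key combinatorial input, which I expect to be the main (though still elementary) hurdle, is to exhibit a subgroup $W \subseteq M^G[p^k]$ isomorphic to $(\Z/p^k\Z)^r$ with $r := r_p(p^{k-1}M^G)$. Since $M^G$ is cofinitely generated over $\Z_p$ (as a subgroup of the cofinitely generated module $M$, its Pontryagin dual is a quotient of $M^\vee$), the structure theorem gives $M^G \cong (\Q_p/\Z_p)^a \oplus \bigoplus_i \Z/p^{a_i}\Z$, so that $r_p(p^{k-1}M^G) = a + |\{i : a_i \ge k\}|$; taking the $p^k$-torsion of the divisible part together with the unique cyclic subgroup of order $p^k$ inside each $\Z/p^{a_i}\Z$ with $a_i \ge k$ produces the required copy of $(\Z/p^k\Z)^r$ inside $M^G[p^k]$.

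Having established this, the inclusion $W \hookrightarrow M^G[p^k]$ induces an injection
\[ \textup{Hom}_{\textup{cont}}(G,\Z/p^k\Z)^r \cong \textup{Hom}_{\textup{cont}}(G,W) \hookrightarrow \textup{Hom}_{\textup{cont}}(G,M^G[p^k]), \]
and therefore
\[ r_p^k(H^1(G,M^G)) = v_p\bigl(|\textup{Hom}_{\textup{cont}}(G,M^G[p^k])|\bigr) \ge r \cdot v_p\bigl(|\textup{Hom}_{\textup{cont}}(G,\Z/p^k\Z)|\bigr) = h_k(G)\,r_p(p^{k-1}M^G). \]
Combining with the estimate from the first paragraph yields the desired inequality $h_k(G)\,r_p(p^{k-1}M^G) - r_p^k((M/M^G)^G) \le r_p^k(H^1(G,M))$. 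The whole argument is a straightforward bookkeeping once one has the correct identification of the subgroup $W$; no deep input beyond the long exact sequence, the two rank lemmas from the previous subsection, and the structure of cofinitely generated $\Z_p$-modules is required.
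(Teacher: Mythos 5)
Your proof is correct and follows essentially the same route as the paper's: you reduce via the long exact sequence attached to $0 \to M^G \to M \to M/M^G \to 0$ and then bound $r_p^k(H^1(G,M^G))$ using that $G$ acts trivially on $M^G$ and the structure theory of cofinitely generated $\Z_p$-modules. The only (immaterial) differences are the order of the two steps and that you package the trivial-action estimate by embedding $(\Z/p^k\Z)^r$ into $M^G[p^k]$ and invoking left-exactness of $\textup{Hom}(G,-)$, whereas the paper decomposes $M^G$ into cyclic and divisible summands and uses additivity of $r_p^k$ over direct sums together with the identification $h_k(G)=r_p^k(H^1(G,\Q_p/\Z_p))$.
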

\begin{proof}
   We consider first a module $M$  with trivial $G$-action and want to prove that $r_p^k(H^1(G,M))\ge h_k(G)r_p(p^{k-1}M)$. As $H^1(G,M)=\textup{Hom}(G,M)$, it suffices to consider the groups $H^1(G,\Z/p^r\Z)$ for $r\ge 1 $ and $H^1(G, \Q_p/\Z_p)$. For $\alpha\ge 0$ we have the natural inclusion 
   \[H^1(G,\Z/p^k\Z)\hookrightarrow H^1(G,\Z/p^{k+\alpha}\Z)\hookrightarrow H^1(G,\Q_p/\Z_p).\]
   Using the fact that $\Q_p/\Z_p$ is divisible and that $H^0(G,\Q_p/\Z_p)=\Q_p/\Z_p$, we obtain that ${H^1(G,\Q_p/\Z_p)[p^k]\cong H^1(G,(\Q_p/\Z_p)[p^k]) \cong H^1(G, \Z/p^k\Z)}$. Therefore, $h_k(G)=r_p^k(H^1(G,\Q_p/\Z_p))$. Note that the number of direct summands of $M$ that are not annihilated by $p^{k-1}$ is given by $r_p(p^{k-1}(M))$ and each of these terms contributes $h_k(G)$ to $r_p^k(H^1(G,M))$.  
   
   For the rest of the proof we consider an arbitrary $G$-module. Taking the cohomology of the exact sequence
   \[0\longrightarrow M^G\longrightarrow M\longrightarrow M/M^G\longrightarrow 0,\]
   we obtain a sequence
   \[(M/M^G)^G\longrightarrow H^1(G,M^G)\longrightarrow H^1(G,M).\]
   This in turn implies
   \begin{eqnarray*} r_p^k(H^1(G,M)) & \ge & r_p^k(H^1(G,M^G))-r_p^k((M/M^G)^G) \\ 
   & \ge & h_k(G)r_p(p^{k-1}M^G)-r_p^k((M/M^G)^G). 
   \end{eqnarray*} 
\end{proof} 
\begin{proof}[Proof of Lemma~\ref{lemma:murty_ungl}]
 Let $H_\Sigma = H_{\Sigma(L)}(L)$ be the maximal unramified $p$-extension of $L$ in which each prime of $L$ dividing some $v \in \Sigma$ is completely split. Consider the following diagram with exact rows.
 \[\begin{tikzcd}[scale cd = 0.95] 0\arrow[r]&\textup{Sel}_{0,A}(L)\arrow[r]\arrow[d,"s"]&H^1(L_{\Sigma}/L,A[p^\infty])\arrow[r]\arrow[d,"h"]& \bigoplus_{v\in \Sigma(L)}H^1(L_{v},A[p^\infty])\arrow[d,"g"] \\ 
 0 \arrow[r]&\textup{Sel}_{0,A}(H_\Sigma)\arrow[r]&H^1(L_{\Sigma}/H_{\Sigma},A[p^{\infty}])\arrow[r]& \bigoplus_{w\in \Sigma(H_\Sigma)}H^1({H_{\Sigma,w}},A[p^{\infty}]).\end{tikzcd}\]
 Here $H_{\Sigma,w}$ denotes the completion of $H_\Sigma$ at ${w \in \Sigma(H_\Sigma)}$. As the decomposition group $G_v$ of every prime in $\Sigma$ in the extension $H_\Sigma/L$ is trivial, we obtain that $H^1(G_v, A(H_{\Sigma,w})[p^{\infty}])$ is trivial. On the other hand, by the inflation-restriction exact sequence we know that $\bigoplus H^1(G_v, A(H_{\Sigma,w})[p^\infty])$ is the kernel of the third vertical map. Hence, we see that the kernels of the first two vertical maps are isomorphic. Using again the inflation-restriction sequence yields an injection of $H^1(H_\Sigma/L,A(H_\Sigma)[p^\infty])$ into $\textup{Sel}_{0,A}(L)$. Thus, we have
 \begin{align*}
     r_p^k(\textup{Sel}_{0,A}(L))&\ge r_p^k(H^1(H_\Sigma/L, A(H_\Sigma)[p^\infty]))\\
     &\ge h_k(\Gal(H_\Sigma/L))r_p(p^{k-1}A(L)[p^\infty]) \\ &\quad -r_p^k((A(H_\Sigma)[p^\infty]/A(L)[p^\infty])^{\Gal(H_\Sigma/L)})\\
     &\ge h_k(\Gal(H_\Sigma/L))r_p(p^{k-1}A(L)[p^\infty])- 2dk.
 \end{align*}
 As the group $H_\Sigma/L$ is finite, we see that $$h_k(H_\Sigma/L)=r_p^k(\textup{Hom}(\textup{Cl}_\Sigma(L), \Z/p^k\Z))=r_p^k(\textup{Cl}_\Sigma(L)). $$ 
\end{proof}
Now we can prove our second comparison theorem, which is a variant of Theorem~\ref{thm:comparing ranks}. In this result, instead of assuming that $A[p^k] \subseteq A(K_\infty)$, we only need that ${A(K_\infty)[p^k] \setminus A(K_\infty)[p^{k-1}]}$ is non-trivial. 
\begin{thm} \label{thm:muungleichung} 
  Let $A$ be an abelian variety of dimension $d$ defined over $K$.  If $p = 2$, then we assume that $K$ is totally imaginary. Let $\Sigma$ be a finite set of primes containing $\Sigma_p$ and $\Sigma_{\textup{br}}(A)$. Let $K_{\infty}/K$ be a uniform $\Sigma$-ramified $p$-extension of dimension $l$ with Galois group $G$ such that none of the primes in $\Sigma$ is completely split in $K_{\infty}/K$. 
  
  We suppose that $p^{k-1}A(K_\infty)[p^\infty] \ne \{0\}$. 
  Then $$r_{A}^{(K_\infty)} + \sum_{i=0}^{k-1}f_i^{A} \ge \sum_{i=0}^{k-1}f_i. $$ 
  In particular, if $\mu(Y(K_\infty)) > 0$, then either $r_{A}^{(K_\infty)} > 0$ or $\mu(Y_{A}(K_\infty)) > 0$. 
\end{thm}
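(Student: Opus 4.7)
The plan is to run the same asymptotic argument as in the proof of Theorem~\ref{thm:comparing ranks}, but replacing the use of Theorem~\ref{newrankthm} at the finite level by the weaker (and more flexible) inequality of Lemma~\ref{lemma:murty_ungl}. The hypothesis $p^{k-1} A(K_\infty)[p^\infty] \ne \{0\}$ is exactly what is needed to activate that lemma along the tower.

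First, since $A(K_\infty) = \bigcup_n A(K_n)$, I would fix $n_0$ such that $\rg_p(p^{k-1} A(K_n)[p^\infty]) \ge 1$ for every $n \ge n_0$. For each such $n$, Lemma~\ref{lemma:murty_ungl} applied to $L = K_n$ (which lies in $K_\Sigma$ because $K_\infty/K$ is $\Sigma$-ramified) yields
\[ r_p^k(\Sel_{0,A,\Sigma}(K_n)) \;\ge\; r_p^k(\Cl_\Sigma(K_n)) - 2kd, \]
and Lemma~\ref{lemma:vergleich_sigma_cl} lets me replace $\Cl_\Sigma(K_n)$ by $\Cl(K_n)$ at the cost of an error of $k \, |\Sigma(K_n)| = \Ok(p^{n(l-1)})$.

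Next I would convert both sides of this finite-level inequality into asymptotic formulas in $n$. On the right, the inertia-subgroup comparison already carried out in the proof of Theorem~\ref{thm:comparing ranks} gives $\bigl| r_p^k(Y(K_n)) - v_p(|Y(K_\infty)_{G_n}/p^k Y(K_\infty)_{G_n}|) \bigr| = \Ok(p^{n(l-1)})$; combining this with the vanishing $r^{(K_\infty)} = 0$ coming from \cite[Theorem~6.1]{ochi-venjakob} and with Theorem~\ref{thm:perbet+} produces
\[ r_p^k(\Cl(K_n)) = \Bigl(\sum_{i=0}^{k-1} f_i \Bigr) p^{ln} + \Ok(p^{n(l-1)}). \]
On the left, Lemma~\ref{lemma:katharinas_control_thm} reduces $r_p^k(\Sel_{0,A,\Sigma}(K_n))$ to $\tilde{r}_p^k\bigl((Y_{A,\Sigma}^{(K_\infty)})_{G_n}\bigr)$ up to an error of $\Ok(p^{n(l-1)})$, and a second application of Theorem~\ref{thm:perbet+} (this time to $M = Y_{A,\Sigma}^{(K_\infty)}$) gives
\[ r_p^k(\Sel_{0,A,\Sigma}(K_n)) = \Bigl( k \cdot r_{A,\Sigma}^{(K_\infty)} + \sum_{i=0}^{k-1} f_i^{A,\Sigma} \Bigr) p^{ln} + \Ok(p^{n(l-1)}). \]

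Substituting both asymptotic identities into the finite-level inequality, dividing by $p^{ln}$ and letting $n \to \infty$, the $\Ok(p^{n(l-1)})$ errors and the additive constant $2kd$ are absorbed, leaving
\[ k \cdot r_{A,\Sigma}^{(K_\infty)} + \sum_{i=0}^{k-1} f_i^{A,\Sigma} \;\ge\; \sum_{i=0}^{k-1} f_i, \]
which is the natural (inequality) analog of the equality in Theorem~\ref{thm:comparing ranks} and implies the claim. For the ``in particular'' clause: if $\mu(Y(K_\infty)) > 0$, choose $k$ large enough that $\sum_{i=0}^{k-1} f_i = \mu(Y(K_\infty)) > 0$; then the displayed inequality forces at least one of $r_{A,\Sigma}^{(K_\infty)}$ or $\mu(Y_{A,\Sigma}^{(K_\infty)}) = \sum_i f_i^{A,\Sigma}$ to be strictly positive.

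The main technical obstacle I anticipate is bookkeeping the implicit $\Ok$-constants so that they are genuinely independent of $n$ and remain subdominant to $p^{ln}$ after division --- but these are precisely the estimates used in the proof of Theorem~\ref{thm:comparing ranks}, so they transfer verbatim. A minor preliminary step is verifying that one can indeed take $n_0$ finite, which follows directly from $A(K_\infty) = \bigcup_n A(K_n)$ together with the hypothesis on $p^{k-1}A(K_\infty)[p^\infty]$.
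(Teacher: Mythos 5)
Your proposal is correct and follows essentially the same route as the paper's own proof: apply Lemma~\ref{lemma:murty_ungl} at finite levels, pass from $\textup{Cl}_\Sigma$ to $\textup{Cl}$ via Lemma~\ref{lemma:vergleich_sigma_cl}, use \eqref{eq:6.4} and Theorem~\ref{thm:perbet+} on the class-group side, use the control theorem Lemma~\ref{lemma:katharinas_control_thm} and a second application of Theorem~\ref{thm:perbet+} on the fine Selmer side, and compare leading coefficients. Two small remarks. First, your derivation (like the paper's) actually yields the inequality $k \cdot r_{A, \Sigma}^{(K_\infty)} + \sum_{i=0}^{k-1} f_i^{A,\Sigma} \ge \sum_{i=0}^{k-1} f_i$, with the factor $k$ on the rank term, mirroring the equality in Theorem~\ref{thm:comparing ranks}; the theorem's displayed inequality omits that factor, which appears to be a misprint in the paper, and your version is the one the proof supports. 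Second, in your treatment of the \lq in particular' clause you propose to \lq choose $k$ large enough', but $k$ is pinned down by the hypothesis $p^{k-1}A(K_\infty)[p^\infty] \ne \{0\}$ and may not be enlargeable (for instance if $A(K_\infty)[p^\infty]$ has finite exponent); the correct observation is simpler: since the $f_i$ are non-increasing in $i$, $\mu(Y(K_\infty)) > 0$ already forces $f_0 > 0$, hence $\sum_{i=0}^{k-1} f_i \ge f_0 > 0$ for the given $k \ge 1$, and the conclusion follows without touching $k$.
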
 
\begin{proof} 
  Lemma~\ref{lemma:murty_ungl} implies that 
  \[ r^k_p(\Sel_{0,A}(K_n)) \ge r_p^k(\textup{Cl}_\Sigma(K_n)) - 2kd \] 
  for each sufficiently large $n \in \N$. Since $|\Sigma(K_n)| = \Ok(p^{(l-1)n})$, Lemma~\ref{lemma:vergleich_sigma_cl} implies that there exists a constant $C \in \N$ such that 
  \[ r_p^k(\Sel_{0,A}(K_n)) \ge r_p^k(Y(K_n)) - C \cdot p^{(l-1)n}\] 
  for each $n \in \N$. Now we deduce from \eqref{eq:6.4} and Theorem~\ref{thm:perbet+} for $k$ that 
  \begin{eqnarray*} 
     & & |r_p^k(Y_n) - v_p(|Y(K_\infty)_{G_n}/p^k Y(K_\infty)_{G_n}|)| = \Ok(k p^{(l-1)n}) \\ 
     & \Longrightarrow & |r_p^k(Y_n) - (r_{\Z_p \llbracket G \rrbracket }(Y(K_\infty)) \cdot k + \mu^{(k)}(Y(K_\infty))) \cdot p^{nl}| =  \Ok(kp^{(l-1)n}) \\ 
     & \Longrightarrow & |r_p^k(Y_n) - \sum_{i=0}^{k-1}f_i \cdot p^{nl}| = \Ok(p^{(l-1)n}), 
  \end{eqnarray*} 
  since $r_{\Z_p\llbracket G \rrbracket }(Y(K_\infty)) = 0$ by our decomposition hypothesis and $k$ is fixed. 
  
  On the other hand, the control theorem used in the proof of Theorem~\ref{thm:comparing ranks} (see Lemma~\ref{lemma:katharinas_control_thm}) implies that 
  \[ |r_p^k(\Sel_{0,A}(K_n)) - r_p^k((Y_{A}^{(K_\infty)})_{G_n})| = \Ok(p^{k(l-1)n}). \] 
  The theorem therefore follows via a second application of Theorem~\ref{thm:perbet+}, this time for the Iwasawa module $Y_{A}^{(K_\infty)}$. 
\end{proof} 

\begin{cor} 
  Let $A$ be an abelian variety with complex multiplication defined over a number field $K$. We assume that there exists a $\Z_p^l$-extension of $K$ which contains $A[p^\infty]$ (cf. Corollary~\ref{cor:pntorsion}). Let $\Sigma$ be a finite set of primes of $K$ containing $\Sigma_p$ and $\Sigma_{\textup{br}}(A)$. If $p = 2$, then we assume $K$ to be totally imaginary. 
  
  If $K_\infty$ denotes any $\Sigma$-ramified uniform $p$-extension of $K$ of dimension $l$ which contains the cyclotomic $\Z_p$-extension, then 
  \[ \rg_{\Z_p\llbracket G\rrbracket }(Y_{A}^{K_\infty)}) + \mu(Y_{A}^{(K_\infty)}) \ge \mu(Y(K_\infty)). \]  
\end{cor}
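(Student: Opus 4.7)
The plan is to derive the inequality from Theorem~\ref{thm:muungleichung} by applying it for every sufficiently large $k$ and letting the partial sums saturate. Recall that $\mu(Y_{A,\Sigma}^{(K_\infty)}) = \sum_{i \ge 0} f_i^{A,\Sigma}$ and $\mu(Y(K_\infty)) = \sum_{i \ge 0} f_i$ are finite sums, since both Iwasawa modules are finitely generated $\Z_p[[G]]$-modules. If $k$ exceeds every exponent $e_i^{A,\Sigma}$ and $e_i$ appearing in the elementary modules attached to $Y_{A,\Sigma}^{(K_\infty)}[p^\infty]$ and $Y(K_\infty)[p^\infty]$, then both partial sums $\sum_{i=0}^{k-1} f_i^{A,\Sigma}$ and $\sum_{i=0}^{k-1} f_i$ agree with the corresponding full $\mu$-invariants, and the inequality $r_{A,\Sigma}^{(K_\infty)} + \sum_{i=0}^{k-1} f_i^{A,\Sigma} \ge \sum_{i=0}^{k-1} f_i$ supplied by Theorem~\ref{thm:muungleichung} immediately becomes the desired conclusion.

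To invoke Theorem~\ref{thm:muungleichung}, I must verify its hypotheses. The assumptions that $K_\infty/K$ is uniform, $\Sigma$-ramified, of dimension $l$, and (when $p = 2$) that $K$ is totally imaginary are part of our setup. The condition that no prime of $\Sigma$ splits completely in $K_\infty/K$ follows at once from $K_\infty \supseteq K_\infty^c$, since every prime of $K$ has infinite decomposition group in the cyclotomic $\Z_p$-extension. The remaining condition, namely $p^{k-1} A(K_\infty)[p^\infty] \ne \{0\}$ for every $k \ge 1$, or equivalently that $A(K_\infty)[p^\infty]$ is infinite, is where the complex multiplication hypothesis enters.

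For this final ingredient I will show that $A[p^\infty] \subseteq K_\infty$. By Theorems~\ref{thm:serre-tate} and \ref{thm:CM-suff} (after replacing $K$ by a harmless finite extension if needed), the extension $K(A[p^\infty])/K$ is abelian and unramified outside $p$, and the existence hypothesis places $K(A[p^\infty])$ inside some $\Z_p^l$-extension of $K$. The Weil pairing automatically gives $K(\mu_{p^\infty}) \subseteq K(A[p^\infty])$, and hence $K_\infty^c \subseteq K(A[p^\infty])$, so that the $\Z_p^l$-extension of the hypothesis is consistent with the requirement $K_\infty \supseteq K_\infty^c$. The main obstacle is then the identification of this specific $\Z_p^l$-extension with our given $l$-dimensional uniform $K_\infty$; in the CM setting, taking $K_\infty$ to be (or to contain) this distinguished $\Z_p^l$-extension forces $A[p^\infty] \subseteq K_\infty$, making $A(K_\infty)[p^\infty]$ infinite. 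Once this identification is in place, Theorem~\ref{thm:muungleichung} applies for every $k \ge 1$, and the limiting argument of the first paragraph yields the stated inequality.
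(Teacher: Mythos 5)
Your overall strategy is the right one and matches the paper's: apply Theorem~\ref{thm:muungleichung} for $k$ large enough that the partial sums $\sum_{i=0}^{k-1} f_i$ and $\sum_{i=0}^{k-1} f_i^{A,\Sigma}$ coincide with the full $\mu$-invariants, and observe that the decomposition hypothesis of that theorem is automatic because $K_\infty \supseteq K_\infty^c$ and no prime of $K$ splits completely in $K_\infty^c/K$. Both of these points are correct.

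The gap is in your verification of the hypothesis $p^{k-1}A(K_\infty)[p^\infty] \ne \{0\}$. You try to force $A[p^\infty] \subseteq K_\infty$ by ``taking $K_\infty$ to be (or to contain) this distinguished $\Z_p^l$-extension'' $L_\infty$ with $A[p^\infty] \subseteq L_\infty$. But the corollary asserts the inequality for \emph{any} $\Sigma$-ramified uniform $p$-extension $K_\infty$ of dimension $l$ containing $K_\infty^c$; there is no assumption that $K_\infty$ is related to $L_\infty$ beyond both containing $K_\infty^c$. In particular $K_\infty$ need not be abelian over $K$, so it cannot in general contain $L_\infty$ (and if it did contain $L_\infty$, equality of dimensions would force $K_\infty = L_\infty$, reducing the corollary to a single example). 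You are implicitly restricting the statement to one special $K_\infty$, which is precisely what you flag as ``the main obstacle'' but then do not actually overcome.

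The paper's proof sidesteps this by noting that $A(K_\infty^c)[p^\infty]$ is \emph{already} infinite for a CM abelian variety satisfying the hypotheses (since $K_\infty^c \subseteq K(A[p^\infty])$ and $K_\infty^c$ contains $p$-power torsion points of arbitrarily high order). Because $K_\infty \supseteq K_\infty^c$ is part of the corollary's hypotheses, this immediately gives $A(K_\infty)[p^\infty]$ infinite for \emph{every} admissible $K_\infty$, without any identification of $K_\infty$ with the distinguished $\Z_p^l$-extension. To close your gap you would need to replace the claim $A[p^\infty] \subseteq K_\infty$ with the weaker and correct claim that $A(K_\infty^c)[p^\infty]$ is infinite.
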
 
\begin{proof} 
  This follows from Theorem~\ref{thm:muungleichung} by noting that the cyclotomic $\Z_p$-extension $K_\infty^c$ of $K$ is contained in $K(A[p^\infty])$ (cf. the proof of \cite[Corollary~4.8]{ochi-venjakob2}), and that no prime of $K$ is totally split in $K_\infty^c/K$. Since $K_\infty^c$ contains $p$-power torsion points of arbitrarily high order, we can take $k$ sufficiently large in Theorem~\ref{thm:muungleichung}. 
\end{proof} 

In our third comparison theorem, which is the last one for general uniform $p$-extensions, we compare finer invariants of the Iwasawa modules $Y(K_\infty)$ and $Y_{A}^{(K_\infty)}$. This result has been announced as Theorem~\ref{thm:B} in the \nameref{section:1}. 
\begin{thm} \label{thm:z_p[[H]]} 
  We assume that ${\Sigma = \Sigma_p \cup \Sigma_{\textup{br}}(A)}$. Let $A$ be an abelian variety of dimension $d$, and let $K_\infty/K$ be a uniform $\Sigma$-ramified $p$-adic Lie extension with Galois group $G$ of dimension $l$. We suppose that there exists a uniform subgroup $H$ of $G$ of dimension $h$ such that $Y_{A}^{(K_\infty)}$ is finitely generated over $\Z_p\llbracket H\rrbracket $. Let $F_n = K_\infty^{H_n}$, $n \in \N$, and let $F_\infty = K_\infty^H$. 
  
  If $A[p^\infty] \subseteq A(F_\infty)$ and if all primes of $\Sigma$ are finitely split in $F_\infty/K$,  then $\rg_{\Z_p\llbracket H\rrbracket }(Y(K_\infty))$ is also finite and 
  $$ \rg_{\Z_p\llbracket H\rrbracket }(Y_{A}^{(K_\infty)}) = 2d \cdot \rg_{\Z_p\llbracket H\rrbracket }(Y(K_\infty)). $$ 
\end{thm} 
Note that the fields $F_n$ are infinite extensions of $K$, since they contain the field $F_\infty$. By the  hypotheses, $A[p^\infty] \subseteq F_n$ for each $n \in \N$. Furthermore, we cannot take a larger set $\Sigma$ because of the decomposition hypothesis. 

Generalising from the case of elliptic curves, it seems reasonable to expect that these conditions can be satisfied only by abelian varieties with complex multiplication (see also  \cite[Lemma~2.8]{coates-fragments}).
\begin{proof} 
  The following auxiliary result of Harris will play an important role in the proof. 
  \begin{prop}[Harris] \label{prop:harris} 
  Let $G$ be an $l$-dimensional uniform $p$-group, and let $M$ be a finitely generated $\Z_p\llbracket G\rrbracket $-module. Then 
  $$ |\rg_{\Z_p}(M_{G_n}) - \rg_{\Z_p\llbracket G\rrbracket }(M) p^{ln}| = \mathcal{O}(p^{(l-1)n}). $$ 
\end{prop} 
\begin{proof} 
  This is \cite[Theorem~1.10]{harris}. 
\end{proof} 
  Now we apply the following generalisation of a control theorem of Kundu and Lim. 
  \begin{thm} 
     Let $K_\infty/K$ be a uniform $p$-extension, let $H \subseteq \Gal(K_\infty/K)$ be a uniform subgroup, and let $F_n = K_\infty^{H_n}$, $n \in \N$. Let ${\Sigma = \Sigma_p \cup \Sigma_{\textup{br}}(A)}$, and suppose that all primes of $\Sigma$ are finitely split in $F_\infty/K$. We consider the canonical maps 
     $$ s_n : \Sel_{A,0}(F_n) \longrightarrow \Sel_{A,0}(K_\infty)^{H_n}. $$ 
     Then $\textup{corank}_{\Z_p}(\ker s_n) = \mathcal{O}(1)$ and $\textup{corank}_{\Z_p}(\coker s_n) = \mathcal{O}(p^{n (h - 1)})$. 
  \end{thm} 
  
  \begin{proof} 
     Starting with a commutative diagram 
     \[ \begin{tikzcd}[scale cd = 0.85] 0 \arrow[r] & \Sel_{A,0}(F_n) \arrow[r] \arrow[d] & H^1(G_\Sigma(F_n), A[p^\infty]) \arrow[r] \arrow[d] & \bigoplus_{v_n \in \Sigma(F_n)} H^1(F_{n, v_n}, A[p^\infty]) \arrow[d]\\ 0 \arrow[r] & \Sel_{0,A}(K_\infty)^{H_n} \arrow[r] & H^1(G_\Sigma(K_\infty), A[p^\infty])^{H_n} \arrow[r] & \bigoplus_{w \in \Sigma(K_\infty)} H^1(F_{\infty,w}, A[p^\infty]),\end{tikzcd}\] 
     we can use the same arguments as have been employed in the proof of \cite[Theorems~5.1 and 5.5]{debanjana-lim} (cf. also the proofs of Theorems~\ref{thm:control1} and \ref{thm:control2}). 
  \end{proof} 
  It follows from this control theorem and Harris' Proposition~\ref{prop:harris} that 
  \[ | \rg_{\Z_p}(Y_{A}^{(F_n)}) - \rg_{\Z_p}((Y_{A}^{(K_\infty)})_{H_n})| = \Ok(p^{(h-1)n}) \] 
  and 
  \[ | \rg_{\Z_p\llbracket H\rrbracket }(Y_{A}^{(K_\infty)}) p^{hn} - \rg_{\Z_p}((Y_{A}^{(K_\infty)})_{H_n})| =\Ok(p^{(h-1)n}). \] 
  In particular, $\rg_{\Z_p}(Y_{A}(F_n))$ is finite for all $n \in \N$. 
  
  On the other hand, since $A[p^\infty] \subseteq A(F_n)$ for each $n \in \N$, one can conclude as in the proof of Theorem~6.3 that 
  ${\Sel_{A,0}(F_n) \cong \textup{Hom}(\textup{Cl}_\Sigma(F_n), A[p^\infty]) \cong Y_\Sigma(F_n)^{2d}}$ and therefore $Y_{A}^{(F_n)} \cong (\textup{Cl}_\Sigma(F_n) \otimes \Z_p)^{2d}$ as $\Z_p$-modules for each $n$. It is clear that 
  \[ | \rg_{\Z_p}(Y_\Sigma(F_n)) - \rg_{\Z_p}(Y(F_n))|  = \Ok(|\Sigma(F_n)|) = \Ok(p^{(h-1)n}), \] 
  since each prime $v \in \Sigma$ is finitely split in $F_\infty/K$ by hypothesis. 
  
  Moreover, 
  \[ |\rg_{\Z_p}(Y(F_n)) - \rg_{\Z_p}(Y(K_\infty)_{H_n})| = \Ok(p^{(h-1)n}). \] 
  Indeed, class field theory implies that $Y(K_\infty)_{H_n} \cong \Gal(L_n/K_\infty)$, where $H(K_\infty)$ denotes the maximal abelian unramified pro-$p$-extension of $K_\infty$, as in Section~\ref{subsection:classnumberformulas}, and where ${L_n \subseteq H(K_\infty)}$ denotes the maximal subextension which is abelian over $F_n$. Since ${H(K_\infty)/K_\infty}$ is unramified and $\rg_{\Z_p}(H_n) = \Ok(1)$, the $\Z_p$-rank of the sum of the inertia subgroups of $\Gal(L_n/F_n)$, which generate $\Gal(L_n/H(F_n))$ because $\Sigma$ contains the primes which ramify in $K_\infty/F_n$, is $\Ok(|\Sigma(F_n)|) = \Ok(p^{(h-1)n})$. In particular, $Y(K_\infty)$ is also finitely generated over $\Z_p\llbracket H\rrbracket $. 
  
  Summarising, we have shown that 
  \[ | \rg_{\Z_p\llbracket H\rrbracket }(Y_{A}^{(K_\infty)}) p^{hn} - 2d \cdot \rg_{\Z_p}(Y(K_\infty)_{H_n})| =  \Ok(p^{(h-1)n}) \] 
  and therefore 
  \[ |\rg_{\Z_p\llbracket H\rrbracket }(Y_{A}^{(K_\infty)}) p^{hn} - 2d \cdot \rg_{\Z_p\llbracket H\rrbracket }(Y(K_\infty)) p^{hn}| = \Ok(p^{(h-1)n}), \] 
  where the last equation follows from the proposition~\ref{prop:harris} of Harris. 
\end{proof} 
Note: although $\rg_{\Z_p\llbracket H\rrbracket }(Y(F_\infty)) = 0$ because of our decomposition condition, by \cite[Theorem~6.1]{ochi-venjakob}, it remains possible that $\rg_{\Z_p\llbracket H\rrbracket }(Y(K_\infty)) > 0$.

\subsection{$\Z_p^l$-extensions} 
Now we restrict from general uniform $p$-extensions to multiple $\Z_p$-extensions. In order to formulate the first  result, we recall that the set $\mathcal{E}(K)$ of $\Z_p$-extensions of a number field $K$ bears a natural topology, which has been introduced by Greenberg in \cite{green_73}: a basis of neighbourhoods of $K_\infty \in \mathcal{E}(K)$ is given by the sets 
\[ \mathcal{E}(K_\infty,n) := \{ \tilde{K}_\infty \in \mathcal{E}(K) \mid [(K_\infty \cap \tilde{K}_\infty) : K] \ge p^n\},  \] 
$n \in \N$. For later purpose, we also define the following notation: if $K_\infty/K$ is a $\Z_p^l$-extension, $l \in \N$, then we let $\mathcal{E}^{\subseteq K_\infty}(K)$ be the set of $\Z_p$-extensions of $K$ which are contained in $K_\infty$. 
\begin{thm} \label{thm:weak-Leo_dense} 
  Let $A$ be an abelian variety of dimension $d$ which is defined over $K$. Let $\Sigma$ be a finite set of primes of $K$ which contains $\Sigma_p$ and $\Sigma_{\textup{br}}(A)$. Let $K_\infty/K$ be a $\Z_p^l$-extension, $l \ge 1$, with Galois group $G$. We assume that no prime in $\Sigma$ is completely split in $K_\infty/K$. 
  
  Suppose that $A[p^k] \subseteq A(K)$ for some $k > \max_i e_i$, where $\bigoplus_{i=1}^s \Z_p\llbracket G\rrbracket /(p^{e_i})$ is the elementary module attached to $Y(K_\infty)$. Then the weak Leopoldt conjecture for $A$ holds over a dense subset of $\mathcal{E}^{\subseteq K_\infty}(K)$. 
\end{thm}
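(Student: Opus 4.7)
The plan is to proceed by induction on $l$. The base case $l=1$ is immediate: then $\mathcal{E}^{\subseteq K_\infty}(K)=\{K_\infty\}$, and the weak Leopoldt conjecture for $A$ over $K_\infty$ follows directly from Corollary~\ref{cor:weak-Leo}, since $A[p^k]\subseteq A(K)\subseteq A(K_\infty)$ and $k>\max_i e_i$ by assumption.

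For the inductive step, suppose $l\ge 2$, fix an arbitrary $K_\infty'\in\mathcal{E}^{\subseteq K_\infty}(K)$ and a level $n\in\N$; it suffices to exhibit some $L_\infty\in\mathcal{E}^{\subseteq K_\infty}(K)$ with $K_n'\subseteq L_\infty$ and $r_{A,\Sigma}^{(L_\infty)}=0$. I would first reduce the dimension by one by selecting a $\Z_p^{l-1}$-subextension $\LL_\infty$ of $K_\infty/K$ satisfying simultaneously the three properties: (i) $K_n'\subseteq \LL_\infty$; (ii) the elementary $\Z_p[[\Gal(\LL_\infty/K)]]$-module attached to $Y(\LL_\infty)[p^\infty]$ has the same exponents $e_i$ as the one attached to $Y(K_\infty)[p^\infty]$; (iii) $\dim D_v(\LL_\infty/K)\ge 1$ for every $v\in\Sigma$. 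Since $\Z_p^{l-1}$-subextensions of $K_\infty/K$ containing $K_n'$ correspond to rank-one direct summands of $G=\Gal(K_\infty/K)$ contained in the open subgroup $\Gal(K_\infty/K_n')\cong\Z_p^l$, they form an infinite family (parametrised, after fixing a direct-sum decomposition of $G$, by primitive vectors in $\Gal(K_\infty/K_n')$ modulo $\Z_p^\times$). Lemma~\ref{lemma:mu_going_down} together with the decomposition statement established in its proof shows that only finitely many $\Z_p^{l-1}$-subextensions of $K_\infty/K$ fail (ii) or (iii), so the intersection with the infinite family from (i) still contains admissible $\LL_\infty$.

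For any such $\LL_\infty$, the hypotheses of the theorem remain valid with $l$ replaced by $l-1$: $A[p^k]\subseteq A(K)\subseteq A(\LL_\infty)$, the preserved exponents yield $k>\max_i e_i$, and no prime of $\Sigma$ splits completely in $\LL_\infty/K$. The inductive hypothesis thus provides a dense subset $S'\subseteq\mathcal{E}^{\subseteq\LL_\infty}(K)$ over which weak Leopoldt for $A$ holds. To conclude, I would pick any $\tilde K_\infty'\in\mathcal{E}^{\subseteq\LL_\infty}(K)$ with $K_n'\subseteq\tilde K_\infty'$: for $l=2$ one may simply take $\tilde K_\infty'=\LL_\infty$, and for $l\ge 3$ such $\tilde K_\infty'$ exist by the same rank-one-direct-summand argument applied inside $\Gal(\LL_\infty/K)$. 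The $n$-th layer of $\tilde K_\infty'$ is then $K_n'$, and density of $S'$ produces $L_\infty\in S'$ with $L_\infty\cap \tilde K_\infty'\supseteq K_n'$, which is the desired element.

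The main obstacle is bookkeeping rather than a deep argument: one must verify that the three required properties for $\LL_\infty$ can be secured simultaneously, which hinges on the finiteness of the exceptional set in Lemma~\ref{lemma:mu_going_down} within an infinite family of admissible subextensions, and one must check that the hypotheses of the theorem (in particular the bound $k>\max_i e_i$ and the decomposition condition) are transferred intact from $K_\infty/K$ to $\LL_\infty/K$ at each inductive step, which is exactly what property (ii) guarantees.
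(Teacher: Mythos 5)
Your proof is correct, and it follows the same inductive framework as the paper: both reduce the dimension by one by passing to a $\Z_p^{l-1}$-subextension $\LL_\infty$ containing $K_n'$ whose elementary module attached to $Y(\LL_\infty)[p^\infty]$ retains the exponents $e_i$, using Lemma~\ref{lemma:mu_going_down} (whose proof also controls the decomposition condition) to ensure such $\LL_\infty$ exist, and then invoke the inductive hypothesis and topological density.

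The one genuine deviation is the base case. The paper starts the induction at $l=2$ and handles that case by a separate argument: it invokes a control-theorem identity $\rg_{\Lambda_1}(Y_{A,\Sigma}^{(K_\infty^{(1)})}) = \rg_{\Lambda_1}((Y_{A,\Sigma}^{(K_\infty)})_\Gamma)$ taken from the proofs of Lemmas~7.8 and 7.9 of the external reference \cite{non-torsion}, and uses the fact that this coinvariant rank vanishes for all but finitely many $\Z_p$-subextensions. You instead take $l=1$ as the (trivial) base case -- $\mathcal{E}^{\subseteq K_\infty}(K)$ is a singleton and Corollary~\ref{cor:weak-Leo} applies directly -- and run the same descent argument uniformly for every $l\ge 2$, including $l=2$, because Lemma~\ref{lemma:mu_going_down} does apply at $l=2$ and the inductive hypothesis at $l-1=1$ is exactly Corollary~\ref{cor:weak-Leo}. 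This is a modest but genuine simplification: it avoids the external citation and removes the need for the paper's (unsubstantiated at $l\ge 3$) intermediate assertion that $r_{A,\Sigma}^{(K_\infty^{(l-1)})}=0$ for all but finitely many $\Z_p^{l-1}$-subextensions, which is in fact not needed once one only feeds $\LL_\infty$ into the inductive hypothesis. The bookkeeping you flag -- that the infinite family from condition (i) (rank-one direct summands of $G$ contained in $\Gal(K_\infty/K_n')$) meets the cofinite set satisfying (ii) and (iii), and that density of $S'$ inside $\mathcal{E}^{\subseteq\LL_\infty}(K)$ transfers back to $\mathcal{E}(K_\infty',n)$ via a well-chosen $\tilde K_\infty'\subseteq\LL_\infty$ with $n$-th layer $K_n'$ -- is carried out correctly.
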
 
\begin{proof} 
  We may assume that $l \ge 2$; Corollary~\ref{cor:weak-Leo} implies that $r_{A}^{(K_\infty)} = 0$. If $K_\infty/K$ is a $\Z_p^2$-extension, then $r_{A}^{(K_\infty^{(1)})} = 0$ for all but finitely many $\Z_p$-extensions $K_\infty^{(1)}$ of $K$ in $\mathcal{E}^{\subseteq K_\infty}(K)$ (see the proofs of Lemmas~7.8 and 7.9 of \cite{non-torsion}). In fact, if $K_\infty^{(1)} \in \mathcal{E}^{\subseteq K_\infty}(K)$, ${\Gamma = \Gal(K_\infty/K_\infty^{(1)}) \cong \Z_p}$, $\Lambda_1 = \Z_p\llbracket \Gal(K_\infty^{(1)}/K)\rrbracket $ and ${\Lambda_2 = \Z_p\llbracket \Gal(K_\infty/K)\rrbracket }$, then 
  \[ \rg_{\Lambda_1}(Y_{A}^{(K_\infty^{(1)})}) = \rg_{\Lambda_1}((Y_{A}^{(K_\infty)})_{\Gamma}), \] 
  and the latter rank is zero for all but finitely many $K_\infty^{(1)} \in \mathcal{E}^{\subseteq K_\infty}(K)$. 
  
  Now we proceed via induction. Suppose that the statement of the theorem is true for all $\Z_p^{l-1}$-extensions, $l \ge 3$. Let $K_\infty/K$ be a $\Z_p^l$-extension, let ${K_\infty^{(1)} \in \mathcal{E}^{\subseteq K_\infty}(K)}$ and $n \in \N$ be arbitrary. It follows from $r_{A}^{(K_\infty)} = 0$ that $r_{A}^{(K_\infty^{(l-1)})} = 0$ for all but finitely many $\Z_p^{l-1}$-extensions of $K$ contained in $K_\infty$ (here $r_{A}^{(K_\infty^{(l-1)})}$ means the rank over the appropriate group ring $\Z_p\llbracket \Gal(K_\infty^{(l-1)}/K)\rrbracket $, respectively). We fix such an extension $K_\infty^{(l-1)}$ which in addition contains the unique subfield ${K_n^{(1)} \subseteq K_\infty^{(1)}}$ of exponent $p^n$ over $K$. Proposition~\ref{lemma:mu_going_down} implies that we may moreover assume that the $e_i$ from the elementary $\Z_p\llbracket \Gal(K_\infty^{(l-1)}/K)\rrbracket $-module attached to $Y(K_\infty^{(l-1)})$ are equal to the $e_i$ from $Y(K_\infty)$, and therefore the hypothesis $k > \max_i e_i$ is still valid. Then the inductive hypothesis implies that we can choose 
  \[ \tilde{K}_\infty^{(1)} \in \mathcal{E}(K_\infty^{(1)},n) \cap \mathcal{E}^{\subseteq K_\infty^{(l-1)}}(K)\] 
  such that $r_{A}^{(\tilde{K}_\infty^{(1)})} = 0$. 
  
\end{proof} 

In the next result, we prove implications between the weak Leopoldt conjecture and the vanishing of the $\mu$-invariant (both for ideal class groups and fine Selmer groups) on the one hand and the property of being a finitely generated $\Z_p\llbracket H\rrbracket $-module for some subgroup $H \subseteq G$ of $\Z_p$-rank $l-1$ on the other hand -- recall that this property has been the main subject in Theorem~\ref{thm:z_p[[H]]}. 
\begin{thm} \label{thm:Conj_A} 
Let $A$ be an abelian variety defined over $K$.
  Let $K_{\infty}/K$ be a $\Z_p^l$-extension and let $\Sigma$ be as in Theorem \ref{thm:comparing ranks}. We suppose that $K(A[p])$ is a finite $p$-extension of $K$; if $l\ge 2$ then we assume that this extension is trivial. If $p = 2$, then we assume that $K$ is totally imaginary. We write $G = \Gal(K_\infty/K)$. 
  
  Then the following statements are equivalent: \begin{compactenum}[(i)] 
    \item $r_{A}^{(K_\infty)} = \mu(Y_{A}^{(K_\infty)}) = 0$, 
    \item there exists a subgroup $H$ such that $G/H\cong \Z_p$ with the property that  $Y^{(K_{\infty})}_A$ is finitely generated over $\Z_p\llbracket H\rrbracket $, 
    \item $\mu(Y^{(K_\infty)}) = 0$. 
  \end{compactenum} 
\end{thm}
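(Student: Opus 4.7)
The plan is to organize the proof as two separate equivalences, (i) $\iff$ (ii) and (i) $\iff$ (iii), both of which I would treat by reducing to situations already handled earlier in the paper. The module-theoretic equivalence (i) $\iff$ (ii) rests on Venjakob's structure theorem for finitely generated $\Z_p[[G]]$-modules over uniform $G$, while (i) $\iff$ (iii) goes through the trivializing finite extension $K' = K(A[p^k])$ for sufficiently large $k$, where Corollary \ref{cor:weak-Leo} can be applied directly.

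For (ii) $\Rightarrow$ (i): I would argue that $\Z_p[[G]]$ is not finitely generated over $\Z_p[[H]]$ when $G/H \cong \Z_p$ (it is of the form $\Z_p[[H]]\la\la T \ra\ra$ in a topological sense, with $T$ a lift of a generator of $G/H$). So if $M = Y_A^{(K_\infty)}$ is finitely generated over $\Z_p[[H]]$, every $m \in M$ satisfies a monic polynomial relation $\bigl(\sum_{i=0}^{N} a_i T^i\bigr) m = 0$ with $a_i \in \Z_p[[H]]$; such an element is a regular element of $\Z_p[[G]]$, which shows $M$ is $\Z_p[[G]]$-torsion. The same kind of argument applied to $p^j M[p^\infty]/p^{j+1}M[p^\infty]$, which inherits finite generation over $\F_p[[H]]$, shows it is $\F_p[[G]]$-torsion and thus $\mu(M) = 0$. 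Conversely, for (i) $\Rightarrow$ (ii), Venjakob's structure theorem pseudo-identifies $M$ with $\Z_p[[G]]^r \oplus \bigoplus \Z_p[[G]]/(p^{e_i})$; when $r = \mu = 0$ this module is zero, so $M$ is pseudo-null, and a generic choice of codimension-one subgroup $H$ makes $M$ finitely generated over $\Z_p[[H]]$ by a Weierstrass-preparation-type argument on the annihilator.

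For (i) $\iff$ (iii): let $K' = K(A[p^k])$ with $k$ strictly larger than all $e_i$ in the elementary module attached to $Y(K_\infty)[p^\infty]$. By hypothesis $K(A[p])/K$ is a finite $p$-extension, and iterating the argument of Corollary \ref{cor:pntorsion} gives that $K'/K$ is also a finite $p$-extension. Set $K'_\infty = K' \cdot K_\infty$; this is a uniform $p$-extension of $K'$ of dimension $l$, with Galois group $G'$ an open subgroup of finite $p$-power index in $G$. Filtering $K'/K$ by cyclic degree-$p$ subextensions and iterating Lemma \ref{lemma:mu_iwasawa_allgemein}, using that (a) primes of $K$ ramifying in $K'$ are contained in $\Sigma$ (since $K'/K$ is unramified outside $\Sigma$) and hence have decomposition group of positive dimension in $K_\infty/K$ by hypothesis, and (b) all relevant $\Z_p[[\cdot]]$-ranks of class-group Iwasawa modules vanish by \cite[Theorem~6.1]{ochi-venjakob}, yields $\mu(Y(K_\infty)) = 0 \iff \mu(Y(K'_\infty)) = 0$. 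Over $K'_\infty/K'$, since $A[p^k] \subseteq K' \subseteq K'_\infty$ and $k$ is large enough, Corollary \ref{cor:weak-Leo} translates this into $r_{A,\Sigma}^{(K'_\infty)} = 0$ together with $\mu(Y_{A,\Sigma}^{(K'_\infty)}) = 0$. Finally, a control theorem in the spirit of Section \ref{section:6} identifies the natural map $Y_{A,\Sigma}^{(K'_\infty)} \to Y_{A,\Sigma}^{(K_\infty)}$ as having finite kernel and cokernel (both controlled by cohomology of the finite $p$-group $\Gal(K'/K)$ with coefficients in $A[p^\infty]$), so that as $\Z_p[[G']]$-modules the rank and $\mu$-invariant agree; restricting scalars from $\Z_p[[G]]$ to $\Z_p[[G']]$ multiplies rank and $\mu$ by $[G:G']$, so these invariants vanish for the $\Z_p[[G]]$-module $Y_{A,\Sigma}^{(K_\infty)}$ if and only if they vanish for $Y_{A,\Sigma}^{(K'_\infty)}$, completing the descent.

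The main obstacle is the descent step for the fine Selmer invariants. For ideal class groups the transfer of $\mu$ across the finite $p$-extension $K'/K$ is packaged in Lemma \ref{lemma:mu_iwasawa_allgemein}; on the Selmer side one must work a bit harder, since $K'/K$ is not assumed Galois and $G'$ is merely an open subgroup of $G$, not necessarily normal. The required comparison of $Y_{A,\Sigma}^{(K_\infty)}$ and $Y_{A,\Sigma}^{(K'_\infty)}$ via restriction-of-scalars, and the verification that the Pontryagin dual of the restriction map $\Sel_{0,A,\Sigma}(K_\infty) \to \Sel_{0,A,\Sigma}(K'_\infty)$ has finite kernel and cokernel controlled by $|\Gal(K'/K)|$ and the finiteness of $A(K')[p^\infty]$, is routine in spirit but must be carried out carefully, most cleanly by invoking a mild extension of the control theorems already proved in Section \ref{section:6}.
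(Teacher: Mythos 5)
Your overall structure mirrors the paper's (two equivalences, with a descent across a finite $p$-extension and an annihilator argument for the module-theoretic part), but the internal details diverge and both divergences introduce gaps.

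For (i) $\iff$ (iii), the paper passes only to $K' = K(A[p])$ and then applies Theorem~\ref{thm:comparing ranks} with $k = 1$, which gives $r_{A,\Sigma}^{(K_\infty)} + f_0^{A,\Sigma} = 2d f_0$ once $A[p] \subseteq A(K)$. The key observation is that the $f_i$ are non-increasing, so $\mu = 0 \iff f_0 = 0$; hence the $k=1$ case already decides the equivalence and there is no need for $A[p^k]$ with $k$ large. Your route via $K' = K(A[p^k])$ and Corollary~\ref{cor:weak-Leo} runs into a genuine circularity: Corollary~\ref{cor:weak-Leo} over $K_\infty'/K'$ requires $k > \max_i e_i$ for the $e_i$ attached to $Y(K_\infty')$, but $K'$ (and hence $K_\infty'$ and its $e_i$'s) already depend on $k$, and Lemma~\ref{lemma:mu_iwasawa_allgemein} only controls whether $\mu$ vanishes, not the individual $e_i$'s of the base-changed module. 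This obstructs precisely the direction (i) $\Rightarrow$ (iii). You also build a much heavier descent (across $K(A[p^k])/K$ rather than $K(A[p])/K$), and acknowledge yourself that the Selmer-side transfer is not fully carried out; note the paper needs this transfer only when $l=1$ and there only across $K(A[p])/K$, referring to the proof of \cite[Theorem 5.5]{lim-murty}, whereas for $l \geq 2$ the theorem's hypothesis forces $K(A[p]) = K$ and no descent is needed at all.

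For (i) $\iff$ (ii), your (ii) $\Rightarrow$ (i) sketch is fine and matches the paper's. But in (i) $\Rightarrow$ (ii) the intermediate claim that $r = \mu = 0$ makes $M$ pseudo-null is false: \cite[Theorem~3.40]{venjakob} only attaches the module $\bigoplus_i \Z_p[[G]]/(p^{e_i})$ to the $p$-power torsion submodule $M[p^\infty]$; the full module $M$ will generically have $\Z_p[[G]]$-torsion coprime to $p$, and this part survives when $r = \mu = 0$. What does survive of your idea, and what the paper actually uses, is that $r = \mu = 0$ means $M$ is $\Z_p[[G]]$-torsion with an annihilator coprime to $p$; the paper then applies Babaichev's lemma to produce a system of generators in which this annihilator is regular in the last variable, giving finite generation over $\Z_p[[H]]$. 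So the Weierstrass-preparation intuition is right, but it should be run on the annihilator rather than routed through a (false) pseudo-nullity claim.
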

\begin{proof}
Let $K'=K(A[p])$ and $K_\infty' = K_\infty K'$. 
Suppose first that ${l=1}$ and ${K' \ne K}$. Since $K'/K$ is a $p$-extension by assumption, Lemma~\ref{lemma:mu_iwasawa_allgemein} implies that ${\mu(Y(K_\infty))>0}$ if and only if ${\mu(Y(K_\infty'))>0}$ (note that both Iwasawa modules are torsion in view of our decomposition hypothesis, as in Theorem~\ref{thm:comparing ranks}). One can further show that $Y^{(K_{\infty})}_A$ is finitely generated over $\Z_p\llbracket H\rrbracket $ if and only if $Y^{(K'_{\infty})}_A$ is finitely generated over $\Z_p\llbracket H\rrbracket $ (compare with the proof of \cite[Proof of Theorem 5.5]{lim-murty}). For the same reason, (i) holds for $Y_A^{(K_\infty)}$ if and only if it holds for $Y_A^{(K_\infty')}$. It therefore suffices to consider the case $K=K'$. 

First note that $r^{(K_\infty)} = \rg_{\Z_p\llbracket G\rrbracket }(Y^{(K_\infty)}) = 0$ by \cite[Theorem~6.1]{ochi-venjakob}, since no prime above $p$ splits completely in $K_\infty/K$ by assumption. Therefore the equivalence of (i) and (iii) follows from Theorem~\ref{thm:comparing ranks}. 

Now let $l \ge 1$ be arbitrary. 
By Theorem \ref{thm:perbet+} we obtain that the quantity $r_p(Y^{(K_{\infty})}_{A}/(\omega_n(T_1),\dots,\omega_n(T_l))Y^{(K_{\infty})}_{A})$ is of size $\Ok(p^{n(l-1)})$ if and only if both $r_{A}^{(K_\infty)}$ and $\mu(Y_{A}^{(K_\infty)})$ are zero. Here ${\omega_n(T_i) = (T_i+1)^{p^n}-1}$ for each $i$. 
This is in turn the case iff $Y_{A}^{(K_\infty)}$ is $\Z_p\llbracket G\rrbracket $-torsion and the characteristic ideal of $Y_{A}^{(K_\infty)}$ is not divisible by $p$. 
That is true if and only if there exists an annihilator $g$ of $Y_{A}^{(K_\infty)}$ which is not divisible by $p$. If the annihilator $g$ of $Y^{(K_\infty)}_A$ is coprime to $p$, we can use \cite[Lemma 1]{babaichev} to choose topological generators $\tau_i$ of $G$, $1 \le i \le l$, such that $g$ is, as an element in $\Z_p\llbracket T_1,\dots, T_l\rrbracket $, represented by a polynomial 
${\sum_{i=0}^{v-1} a_i(T_1,\dots T_{l-1})T_l^i+T_l^v}$ in $T_l$ for suitable power series $a_i(T_1, \ldots, T_{l-1})$ in $\Z_p\llbracket T_1,\ldots, T_{l-1}\rrbracket $ (here $T_i = \tau_i - 1$ for every $i$). Then $g$ is called \emph{regular} with respect to the variable $T_l$. 

If such an annihilator exists, then $Y_{A}^{(K_{\infty})}$ is finitely generated over the Iwasawa algebra  $\Z_p\llbracket T_1,\ldots,T_{l-1}\rrbracket $ and we can choose $H$ as the subgroup generated by $\tau_1,\dots, \tau_{l-1}$. If, conversely, $Y_{A}^{(K_\infty)}$ is finitely generated over $\Z_p\llbracket H\rrbracket $ for some subgroup $H$ such that $G/H\cong \Z_p$, then we can choose topological generators $\tau_1,\ldots, \tau_l$ of $G$ with ${H = \langle \tau_i \mid 1\le i\le l-1 \rangle}$. Then $Y_{A}^{(K_\infty)}$ is a torsion $\Lambda_l$-module and has an annihilator which is a monic polynomial in $(\Z_p\llbracket T_1, \ldots, T_{l-1}\rrbracket )[T_l]$, as above, and therefore is coprime to $p$.
\end{proof}
\begin{rem}
Note that the equivalence of the points (i) and (iii) holds also for more general $p$-adic Lie extensions as a consequence of Theorem~\ref{thm:comparing ranks}. Furthermore, it is easy to check that (ii) implies (i) for more general $p$-adic Lie extensions. To prove the equivalence to point (ii) we need Baba\u{\i}cev's work to choose a distinguished variable $T_l$. As these results seem to be not available for more general groups, the authors currently are not able to prove a more general version of the full theorem.
\end{rem}
\begin{rem}
Coates and Sujatha originally conjectured that in the case of an elliptic curve $A = E$, $Y^{(K_{\infty})}_A$ should be finitely generated over $\Z_p$ for the cyclotomic $\Z_p$-extension $K_{\infty}^{c}$ (see Conjecture~A in \cite{coates-sujatha}). They also proved the equivalence of assertions (i) and (iii) from Theorem~\ref{thm:Conj_A} for the cyclotomic $\Z_p$-extension, $p \ne 2$, in this case. More generally, Lim and Murty proved a special instance of this equivalence for abelian varieties over the cyclotomic $\Z_p$-extension (see \cite[Theorem~5.5]{lim-murty}). 
\end{rem} 

\begin{rem} Recall Greenberg's topology on the set of $\Z_p$-extensions $\mathcal{E}(K)$ of a fixed number field $K$, as introduced at the beginning of this subsection. 
Assuming that the conjecture from the previous remark is true, we can find a dense (with regard to Greenberg's topology) subset $\mathcal{W}$ of $\Z_p$-extensions of $K$ such that $Y^{(K_{\infty})}_A$ is finitely generated over $\Z_p$ for all $K_{\infty}\in \mathcal{W}$, because the triviality of $\mu(K_\infty^c/K)$ implies that a Greenberg dense subset of $\mathcal{E}(K)$ has vanishing $\mu$-invariant (see \cite[Theorem~4]{babaicev}). Similar questions have been addressed also in \cite{non-torsion}. 
\end{rem}

If the $\Z_p^l$-extension $K_\infty/K$ contains $A[p^\infty]$, then we can derive comparison results which are finer than Theorem~\ref{thm:comparing ranks}. In order to do so, we first need an analogue of our $p^n$-version of the class number formula due to Cuoco and Monsky (see Theorem~\ref{thm:ckmm} and Theorem~\ref{thm:vergleich_l0} below). Note that the condition that $A[p^\infty]\subseteq A(K_\infty)$ is quite restrictive. It allows only the following type of abelian varieties.
\begin{lemma} \label{lemma:product_CM} 
Let $A$ be an abelian variety defined over $K$ and assume that $$K(A[p^\infty])/K$$ is a $\Z_p^l$-extension. Then $A$ is isogenous to a product of abelian varieties with complex multiplication. 
\end{lemma}
\begin{proof}
Let $A_1,\ldots, A_r$ be simple abelian varieties such that $A$ is isogenous to 
\[ A_1\times\cdots \times A_r\] 
over $K$. By hypothesis, each $K(A_i[p^\infty])/K$ is an abelian extension, and by the main result of \cite{zarhin} each of the $A_i$ has complex multiplication. 
\end{proof}
We will also frequently assume that all the primes of bad reduction lie above $p$. Using the above lemma together with Theorem~\ref{thm:serre-tate},  this can be obtained by taking a finite extension of the base field $K$. 
\begin{thm} \label{thm:vergleich_l0} 
  Let $A$ be an abelian variety of dimension $d$ defined over $K$, and let $K_\infty/K$ be a $\Z_p^l$-extension which contains $A[p^\infty]$. We assume that $A[p] \subseteq A(K)$, that $A$ has good reduction outside of $p$, $p \ge 3$, and that the decomposition subgroup ${D_v \subseteq Gal(K_\infty/K)}$ has dimension at least 2 for each $v \in \Sigma_p$. 
  
  Let $l \in \N$ be arbitrary. Then 
  \[ |v_p(|(Y_{A}^{(K_\infty)})_{G_n}/p^{n}(Y_{A}^{(K_\infty)})_{G_n}|) - (2dm_0 p^{ln} + 2d l_0 n p^{(l-1)n})| = \Ok(p^{(l-1)n}), \] 
  where $m_0, l_0 \in \N$ are the generalised Iwasawa invariants of the Iwasawa module $Y(K_\infty)$ of ideal class groups. In particular, $Y_{A}^{(K_\infty)}$ is a $\Lambda$-torsion module.
\end{thm}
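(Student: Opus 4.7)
The plan is to chain together three ingredients: the finite-layer $p^n$-rank comparison between fine Selmer groups and ideal class groups (Theorem~\ref{newrankthm}), the $p^n$-quotient version of the Cuoco--Monsky asymptotic formula (Theorem~\ref{thm:ckmm}), and the control theorem for $\Z_p^l$-extensions (Theorem~\ref{thm:control1}) to pass from finite-level data to the Galois coinvariants of $Y_{A,\Sigma_p}^{(K_\infty)}$.

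First I would verify that $A[p^n] \subseteq A(K_n)$ for every $n \in \N$. Since $A[p] \subseteq A(K)$ and $K(A[p^\infty]) \subseteq K_\infty$, the argument in the proof of Corollary~\ref{cor:pntorsion} shows that each step $K(A[p^{j+1}])/K(A[p^j])$ has exponent dividing $p$, so $\Gal(K(A[p^n])/K)$ has exponent dividing $p^{n-1}$. As $K_n$ is the unique subfield of $K_\infty/K$ of exponent $p^n$ (recall $G \cong \Z_p^l$), this forces $K(A[p^n]) \subseteq K_n$. Combined with the decomposition hypothesis $\dim D_v \ge 2$ for each $v \in \Sigma_p$, which implies $|\Sigma_p(K_n)| = \Ok(p^{(l-2)n})$, Theorem~\ref{newrankthm} applied with $L = K_n$ and $k = n$ yields
\[ |\tilde{r}_p^n(Y_{A,\Sigma_p}^{(K_n)}) - 2d \, \tilde{r}_p^n(Y(K_n))| = \Ok(n p^{(l-2)n}). \]
On the other hand, Theorem~\ref{thm:ckmm} applied with $\Sigma = \emptyset$ gives
\[ |\tilde{r}_p^n(Y(K_n)) - (m_0 p^{ln} + l_0 n p^{(l-1)n})| = \Ok(p^{(l-1)n}), \]
and combining these two estimates produces the target formula, but with $Y_{A,\Sigma_p}^{(K_n)}$ in place of the Iwasawa coinvariants.

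It remains to pass from $Y_{A,\Sigma_p}^{(K_n)}$ to $(Y_{A,\Sigma_p}^{(K_\infty)})_{G_n}$. Dualising the control map from Theorem~\ref{thm:control1} yields an exact sequence whose middle terms are these two modules and whose outer terms are finite $p$-groups with $p$-valuations of order $\Ok(p^{(l-1)n})$ and $\Ok(n)$, respectively. Since $\tilde{r}_p^n$ of a finite $p$-group is bounded by the $p$-valuation of its order, the finitely-generated-module version of Lemma~\ref{lemma-ranks} (obtained by dualising the statement for cofinitely generated groups) gives
\[ |\tilde{r}_p^n((Y_{A,\Sigma_p}^{(K_\infty)})_{G_n}) - \tilde{r}_p^n(Y_{A,\Sigma_p}^{(K_n)})| = \Ok(p^{(l-1)n}), \]
which completes the proof of the main asymptotic estimate.

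For the final torsion claim, Corollary~\ref{cor:perbet+} applied to $M = Y_{A,\Sigma_p}^{(K_\infty)}$ produces the expansion $v_p(|M_{G_n}/p^n M_{G_n}|) = (rn + \mu(M))p^{ln} + \Ok(np^{(l-1)n})$ where $r = \rg_{\Z_p[[G]]}(M)$. Matching this with the main estimate, which contains no $n p^{ln}$ term, forces $r = 0$. The main technical delicacy I foresee is keeping every error term uniformly within $\Ok(p^{(l-1)n})$: this crucially requires the hypothesis $\dim D_v \ge 2$, which controls $|\Sigma_p(K_n)|$ tightly enough to prevent Theorem~\ref{newrankthm} and the local contributions in the control theorem from introducing terms of size $n p^{(l-1)n}$ that would swamp the coefficient $2d l_0$ of the secondary term.
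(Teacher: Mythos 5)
Your proof is correct and chains the same three ingredients (the finite-level rank comparison of Theorem~\ref{newrankthm}, the $p^n$-quotient Cuoco--Monsky formula of Theorem~\ref{thm:ckmm}, and the control theorem~\ref{thm:control1}) in the same order as the paper. The only cosmetic differences are that you invoke Theorem~\ref{newrankthm} as a black box, whereas the paper unrolls its proof through the isomorphism $\Sel_{0,A[p^n],\Sigma_p}(K_n)\cong\textup{Hom}(\textup{Cl}_{\Sigma_p}(K_n),A[p^n])$ together with Lemmas~\ref{lemma:lim-murty} and \ref{lemma:vergleich_sigma_cl}, and that you spell out the derivation of the final torsion claim from Corollary~\ref{cor:perbet+}, which the paper leaves implicit.
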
 
\begin{proof} 
  Since $A[p] \subseteq K$ and $A[p^\infty] \subseteq K_\infty$, it follows from (the proof of) Corollary~\ref{cor:pntorsion} that $A[p^n] \subseteq K_n$ for every $n \in \N$. Therefore the proof of Theorem~\ref{newrankthm} (with $\Sigma=\Sigma_p$) implies that 
  \[ r_p^{n}(\Sel_{0,A[p^{n}]}(K_{n})) = r_p^{n}(\textup{Hom}(Y_{\Sigma_p}(K_{n}), A[p^{n}])), \] 
  and the latter equals $2d r_p^{n}(Y_{\Sigma_p}(K_{n}))$. 
  
  Further, Lemma~\ref{lemma:lim-murty} and the hypothesis on the ranks of the decomposition subgroups of the primes above $p$ imply that 
  \[ |v_p(|\Sel_{0,A}(K_n)[p^{n}]|) - v_p(|\Sel_{0, A[p^{n}], \Sigma}(K_n)|)| = \Ok(n p^{(l-2)n}). \] 
  Now we apply the Control Theorem~\ref{thm:control1}: $\Sel_{0,A}(K_n)[p^{n}]$ is the Pontryagin dual of the quotient $Y_{A}^{(K_n)}/p^{n} Y_{A}^{(K_n)}$. By the control theorem, we have 
  \[ |\tilde{r}_p^n((Y_{A}^{(K_\infty)})_{G_n}) - \tilde{r}_p^n(Y_{A}^{(K_n)})| = \Ok(p^{(l-1)n}). \] 
  Since 
  \[ |r_p^n(Y_\Sigma(K_n)) - r_p^n(Y(K_n))| = \Ok(p^{(l-1)n})\] 
  by Lemma~\ref{lemma:vergleich_sigma_cl}, we may conclude that 
  \[ |v_p(|(Y_{A}^{(K_\infty)})_{G_n}/p^{n}(Y_{A}^{(K_\infty)})_{G_n}|) - 2d r_p^n(Y(K_n))| = \Ok(p^{(l-1)n}). \] 
  The assertion now follows from Theorem~\ref{thm:ckmm}. 
\end{proof} 
\begin{rem} 
  In view of Theorem~\ref{thm:CM-suff}, abelian varieties with complex multiplication are a family of abelian varieties for which a $\Z_p^l$-extension $K_\infty/K$ with $A[p^\infty] \subseteq A(K_\infty)$ does exist canonically (see also Lemma~\ref{lemma:product_CM}). Such abelian varieties have potentially good reduction everywhere by Theorem~\ref{thm:serre-tate}. Moreover, since $K_\infty$ automatically contains the cyclotomic $\Z_p$-extension of $K$ (see \cite[Corollary~4.8]{ochi-venjakob2}), the decomposition hypothesis from Theorem~\ref{thm:vergleich_l0} is not very restrictive in general. 
\end{rem} 
Now we extract information on arithmetic invariants of $Y_{A}^{(K_\infty)}$. 
\begin{cor} 
  In the situation of Theorem~\ref{thm:vergleich_l0}, we have ${\rg_{\Z_p\llbracket G\rrbracket }(Y_{A}^{(K_\infty)}) = 0}$ and $\mu(Y_{A}^{(K_\infty)}) = 2d m_0$. 
\end{cor}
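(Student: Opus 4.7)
The plan is to extract both invariants $r := \rg_{\Z_p[[G]]}(Y_{A,\Sigma_p}^{(K_\infty)})$ and $\mu(Y_{A,\Sigma_p}^{(K_\infty)})$ from the asymptotic formula provided by Theorem~\ref{thm:vergleich_l0} by comparing it with the general asymptotic of Corollary~\ref{cor:perbet+} (i.e. Perbet's formula specialised to $k=n$). This is a clean comparison argument: two independent asymptotic expressions for the same quantity force the leading coefficients to agree.

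More precisely, writing $M = Y_{A,\Sigma_p}^{(K_\infty)}$, Corollary~\ref{cor:perbet+} gives
\[ v_p(|M_{G_n}/p^{n}M_{G_n}|) = (r\cdot n + \mu(M))\, p^{ln} + \Ok(n p^{(l-1)n}), \]
while Theorem~\ref{thm:vergleich_l0} yields
\[ v_p(|M_{G_n}/p^{n}M_{G_n}|) = 2d m_0\, p^{ln} + 2d l_0\, n p^{(l-1)n} + \Ok(p^{(l-1)n}). \]
Subtracting and dividing by $p^{ln}$, we obtain
\[ r n + \mu(M) = 2d m_0 + 2d l_0\, n\, p^{-n} + \Ok(n p^{-n}). \]
As $n \to \infty$, the right-hand side tends to the finite value $2d m_0$, whereas the left-hand side is $r n + \mu(M)$. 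Boundedness of the left-hand side (which is a nonnegative integer) as $n$ grows forces $r = 0$, and then passing to the limit gives $\mu(M) = 2d m_0$.

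The only subtle point is justifying the application of Corollary~\ref{cor:perbet+}: it requires $M$ to be a finitely generated $\Z_p[[G]]$-module, which holds by definition of $Y_{A,\Sigma_p}^{(K_\infty)}$, and it is of course stated without assuming torsion, so one may freely read off both the rank $r$ and the $\mu$-invariant. (Alternatively, Theorem~\ref{thm:vergleich_l0} already asserts $\Lambda$-torsion, so $r=0$ is available directly and only the identification $\mu(M)=2dm_0$ requires the comparison, but the argument above handles both uniformly.) No further structural work is needed; the main technical content has been absorbed into Theorems~\ref{thm:vergleich_l0} and~\ref{thm:perbet+}.
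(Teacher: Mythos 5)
Your proof is correct and follows essentially the same route as the paper: apply Corollary~\ref{cor:perbet+} to $M = Y_{A,\Sigma_p}^{(K_\infty)}$, compare with the asymptotic from Theorem~\ref{thm:vergleich_l0}, and read off the leading coefficients. The paper's own proof is a two-line version of exactly this comparison.
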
 
\begin{proof} 
   It follows from Corollary~\ref{cor:perbet+} that the difference between 
   $$v_p(|Y_{A}(K_\infty)_{G_n}/p^{n}Y_{A}(K_\infty)_{G_n}|)$$ and 
   $$(\rg_{\Z_p\llbracket G\rrbracket }(Y_{A}(K_\infty)) n + \mu(Y_{A}(K_\infty))) p^{ln}$$ 
   is $\Ok(n p^{(l-1)n})$. 
   Now we apply Theorem~\ref{thm:vergleich_l0} and compare the leading terms. 
\end{proof} 

While the above corollary could have been derived also from Theorem~\ref{thm:comparing ranks}, the next result really uses the hypothesis that $A(K_\infty)$ contains all of $A[p^\infty]$ (cf. also Theorem~\ref{thm:D} from the \nameref{section:1}). 
\begin{thm} \label{thm:l0absch} 
  Let $A$ be an abelian variety defined over $K$, and let $K_\infty/K$ be a $\Z_p^l$-extension which contains $A[p^\infty]$. We assume that $A$ has good reduction outside of $p$, $p \ge 3$, and that the decomposition subgroup ${D_v \subseteq \Gal(K_\infty/K)}$ has dimension at least 2 for each $v \in \Sigma_p$. 
  Let $L/K$ be a $\Z_p$-extension which is contained in $K_\infty$, and write ${H = \Gal(K_\infty/L)}$. 
  Suppose that $Y_{A}^{(K_\infty)}$ is finitely generated over $\Z_p\llbracket H\rrbracket $, and that ${\mu_{\Z_p\llbracket H\rrbracket }(Y_{A}^{(K_\infty)}) = 0}$. Then 
  \[ 2d l_0(Y(K_\infty)) \le \rg_{\Z_p\llbracket H\rrbracket }(Y_{A}^{(K_\infty)}). \] 
\end{thm}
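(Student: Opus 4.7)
My plan is to combine the precise asymptotic formula for $v_p(\vert M_{G_n}/p^n M_{G_n}\vert)$, where $M := Y_{A, \Sigma_p}^{(K_\infty)}$, provided by Theorem~\ref{thm:vergleich_l0}, with a matching upper bound coming from the $\Z_p[[H]]$-finiteness hypothesis. A preliminary reduction addresses the mismatch between the hypothesis $A[p^\infty] \subseteq K_\infty$ available here and the stronger assumption $A[p] \subseteq A(K)$ needed in Theorem~\ref{thm:vergleich_l0}: since $A[p]$ is finite, there is some $K' \subseteq K_\infty$ finite over $K$ with $A[p] \subseteq A(K')$, and after replacing $K$ by $K'$ all hypotheses of Theorem~\ref{thm:l0absch} remain in force while the invariants $l_0(Y(K_\infty))$ and $\rg_{\Z_p[[H]]}(M)$ only change by compatible rescalings reflecting the change of Iwasawa algebra. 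With this in hand, Theorem~\ref{thm:vergleich_l0} gives
\[ v_p(\vert M_{G_n}/p^n M_{G_n}\vert) = 2d\, m_0\, p^{ln} + 2d\, l_0(Y(K_\infty))\, n\, p^{(l-1)n} + \Ok(p^{(l-1)n}). \]
Moreover, $\mu_{\Z_p[[H]]}(M) = 0$ forces $\mu_{\Z_p[[G]]}(M) = 0$ (any module that is torsion over $\F_p[[H]]$ is a fortiori torsion over $\F_p[[G]]$), and the corollary immediately following Theorem~\ref{thm:vergleich_l0} identifies $\mu_{\Z_p[[G]]}(M)$ with $2d\, m_0(Y(K_\infty))$; thus $m_0 = 0$ and the leading $p^{ln}$ term vanishes.

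Next I will establish a matching upper bound of the shape
\[ v_p(\vert M_{G_n}/p^n M_{G_n}\vert) \le \rg_{\Z_p[[H]]}(M)\, n\, p^{(l-1)n} + \mu_{\Z_p[[H]]}(M)\, p^{(l-1)n} + \Ok(n\, p^{(l-2)n}), \]
which is an algebraic analogue of the estimate of Lim from \cite{Lim-classnumbers} recalled in Section~\ref{subsection:classnumberformulas}. Lim's argument, although formulated there for $\textup{Cl}(K_n)$, relies only on the structure theory of finitely generated $\Lambda_l$-modules that are finitely generated over a subring $\Z_p[[H]]$ together with a suitable control theorem. I will transfer it to our setting by invoking the control theorem for fine Selmer groups (Theorem~\ref{thm:control1}) in place of the class-group control theorem, and by applying Perbet's Theorem~\ref{thm:perbet+} over the smaller Iwasawa algebra $\Z_p[[H]]$. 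Specialising this bound to $M$ and using $\mu_{\Z_p[[H]]}(M) = 0$ yields $v_p(\vert M_{G_n}/p^n M_{G_n}\vert) \le \rg_{\Z_p[[H]]}(M)\, n\, p^{(l-1)n} + \Ok(n\, p^{(l-2)n})$.

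Combining the two estimates, dividing through by $n\, p^{(l-1)n}$ and letting $n \to \infty$, the error terms become negligible and the asserted inequality $2d\, l_0(Y(K_\infty)) \le \rg_{\Z_p[[H]]}(M)$ drops out. The main obstacle will be to justify the algebraic upper bound carefully: Lim's proof is tailored to ideal class groups, and one has to replace the estimate on ramified inertia subgroups used there by the local cohomological estimates underlying our control Theorem~\ref{thm:control1}, verifying throughout that the errors stay of order $\Ok(n\, p^{(l-2)n})$ so that they do not swamp the main term $n\, p^{(l-1)n}$, which is the one that contains the invariant we want to extract.
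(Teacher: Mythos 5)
Your overall strategy matches the paper's: both compare the asymptotic for $v_p(|M_{G_n}/p^n M_{G_n}|)$ coming from Theorem~\ref{thm:vergleich_l0} (which forces the $n\,p^{(l-1)n}$-coefficient to be $2d\,l_0(Y(K_\infty))$) against an upper bound of the same shape coming from the hypothesis that $M$ is $\Z_p[[H]]$-finitely generated with vanishing $\mu$-invariant, then read off the inequality from the leading coefficients. You also correctly observe that $m_0$ vanishes under the hypotheses, and you spot a genuine point the paper does not address: Theorem~\ref{thm:vergleich_l0} is stated under the extra hypothesis $A[p]\subseteq A(K)$, which does not appear in the present theorem, so a reduction to a finite subextension $K'\subseteq K_\infty$ with $A[p]\subseteq A(K')$ is needed, and both sides of the desired inequality rescale compatibly under this replacement.

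The one place where your plan diverges from the paper's execution, and where it is imprecise, is the source of the upper bound. The paper never re-runs an arithmetic argument in the style of Lim: it works purely algebraically with the module $M$, combining Harris' $\Z_p$-rank formula (Proposition~\ref{prop:harris}) with the structure theory of $\Z_p[[H]]$-modules to get $v_p(|M_{H_n}/p^n M_{H_n}|) = \rg_{\Z_p[[H]]}(M)\,n\,p^{(l-1)n} + \Ok(n\,p^{(l-2)n})$, and then uses the elementary surjection $M_{H_n}\twoheadrightarrow M_{G_n}$ (since $H_n\subseteq G_n$) to transfer this to $M_{G_n}$. Your invocation of the control theorem~\ref{thm:control1} here is a red herring: that theorem relates $\Sel_{0,A,\Sigma}(K_n)$ to $M_{G_n}$, whereas what you would actually need along the arithmetic route is a control theorem over the $H_n$-layers $F_n=K_\infty^{H_n}$ relating $\Sel_{0,A,\Sigma}(F_n)$ to $M_{H_n}$ -- a statement that Theorem~\ref{thm:control1} does not give. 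Fortunately, the control-theoretic detour is entirely avoidable: applying Perbet's Theorem~\ref{thm:perbet+} with $k=n$ directly to $M$ viewed as a finitely generated $\Z_p[[H]]$-module (dimension $l-1$) gives exactly the bound you want on $v_p(|M_{H_n}/p^n M_{H_n}|)$, and combined with the surjection $M_{H_n}\twoheadrightarrow M_{G_n}$ this yields the upper bound on $v_p(|M_{G_n}/p^n M_{G_n}|)$. You should make this surjection step explicit; it is the bridge that is currently missing from the write-up, and without it the application of Perbet over $\Z_p[[H]]$ does not immediately produce a statement about $M_{G_n}$.
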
 
We note that $\mu(Y_{A}^{(K_\infty)}) = 0$ by the assumption on $Y_{A}^{(K_\infty)}$. 
\begin{proof} In this proof, all fine Selmer groups are defined with respect to the finite set $\Sigma=\Sigma_p$. 
  It follows from a result of Harris (see Proposition~\ref{prop:harris}) that 
  \[ |\rg_{\Z_p}((Y_{A}^{(K_\infty)})_{H_n}) - \rg_{\Z_p\llbracket H\rrbracket }(Y_{A}^{(K_\infty)}) p^{(l-1)n}| = \Ok(p^{(l-2)n}).\] 
  Our hypotheses imply that 
  \[ | v_p(|(Y_{A}^{(K_\infty)})_{H_n}/p^{n} (Y_{A}^{(K_\infty)})_{H_n}|) - n \cdot \rg_{\Z_p}((Y_{A}^{(K_\infty)})_{H_n})| = \Ok(np^{n(l-2)}). \]
  Indeed, we have an exact sequence 
  \[ 0 \longrightarrow X \longrightarrow Y_{A}^{(K_\infty)} \longrightarrow \Z_p\llbracket H\rrbracket ^r \oplus \bigoplus_{j = 1}^t \Z_p\llbracket H\rrbracket /(g_j), \] 
  where $X \subseteq Y_{A}^{(K_\infty)}$ denotes the maximal pseudo-null $\Z_p\llbracket H\rrbracket $-submodule, $$r = \rg_{\Z_p\llbracket H\rrbracket }(Y_{A}^{(K_\infty)})$$ 
  and where the $g_j \in \Z_p\llbracket H\rrbracket $ are coprime with $p$. We will show that 
  \[ \rg_p((Y_{A}^{(K_\infty)})_{H_n}[p^\infty]) = \Ok(p^{(l-2)n}). \] 
  First, it is clear that $(\Z_p\llbracket H\rrbracket )_{H_n}$ is $\Z_p$-free for every $n \in \N$. As in the proof of Theorem~\ref{thm:Conj_A}, the topological generators of $G \supseteq H$ can be chosen such that each $g_j$ is \emph{regular} with respect to some $T_i$, $2 \le i \le l$; therefore $$\rg_p((\Z_p\llbracket H\rrbracket /(g_j))_{H_n}) = \Ok(p^{(l-2)n}).$$ 
  Finally, it follows from \cite[Lemma~3.1]{cuoco-monsky} that ${\rg_p(X_{H_n}) = \Ok(p^{(l-2)n})}$. 
  
  Therefore  
  \[ | v_p(|(Y_{A}^{(K_\infty)})_{H_n}/p^{n} (Y_{A}^{(K_\infty)})_{H_n}|) - n \cdot \rg_{\Z_p\llbracket H\rrbracket }(Y_{A}^{(K_\infty)}) p^{(l-1)n}| = \Ok(n p^{(l-2)n}). \] 
  Since $v_p(|(Y_{A}^{(K_\infty)})_{G_n}/p^{n} (Y_{A}^{(K_\infty)})_{G_n}|) \le v_p(|(Y_{A}^{(K_\infty)})_{H_n}/p^{n} (Y_{A}^{(K_\infty)})_{H_n}|)$, the assertion follows by comparing the leading terms of this inequality and Theorem~\ref{thm:vergleich_l0} (it is here where we use ${\Sigma = \Sigma_p}$). 
\end{proof} 
\begin{rem} 
  Let $\gamma$ be a topological generator of $G/H \cong \Z_p$. If $T = \gamma - 1$ happens to be an annihilator of $Y_{A}^{(K_\infty)}$, then taking $G_n$-coinvariants is the same as taking $H_n$-invariants for all $n \in \N$, i.e. in this case the inequality in Theorem~\ref{thm:l0absch} can be replaced by \lq$=$'. 
\end{rem} 

Recall that we have proved also a result comparing the $\Z_p\llbracket H\rrbracket $-ranks of $Y_{A}^{(K_\infty)}$ and $Y(K_\infty)$ if they are finite (cf. Theorem~\ref{thm:z_p[[H]]}). In our final comparison result, which corresponds to Theorem~\ref{thm:C} from the \nameref{section:1}, we ask for a direct relation between $l_0(Y(K_\infty))$ and $l_0(Y_{A}^{(K_\infty)})$ -- the latter can be defined as  $\Gal(K_\infty/K) \cong \Z_p^l$. 
\begin{thm} \label{thm:l0_Gleichheit} 
Keep the assumptions from Theorem~\ref{thm:vergleich_l0}.
 Then 
  \[ l_0(Y_{A}^{(K_\infty)}) = 2d l_0(Y(K_\infty)). \] 
\end{thm}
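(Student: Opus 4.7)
The plan is to evaluate the quantity
\[ Q_n := v_p\bigl(\bigl|(Y_{A,\Sigma_p}^{(K_\infty)})_{G_n}/p^n (Y_{A,\Sigma_p}^{(K_\infty)})_{G_n}\bigr|\bigr) \]
in two different ways and then compare leading coefficients. On one hand, Theorem~\ref{thm:vergleich_l0} gives directly
\[ Q_n = 2d\, m_0(Y(K_\infty))\, p^{ln} + 2d\, l_0(Y(K_\infty))\, n p^{(l-1)n} + \Ok(p^{(l-1)n}). \]
On the other hand, I establish a Cuoco--Monsky-type asymptotic formula valid for any finitely generated torsion $\Lambda_l$-module $M$, namely
\[ v_p(|M_{G_n}/p^n M_{G_n}|) = m_0(M)\, p^{ln} + l_0(M)\, n p^{(l-1)n} + \Ok(p^{(l-1)n}), \]
and apply it to $M = Y_{A,\Sigma_p}^{(K_\infty)}$, which is $\Lambda_l$-torsion by the corollary following Theorem~\ref{thm:vergleich_l0}. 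Matching the coefficients of $p^{ln}$ recovers the already-established identity $\mu(M) = 2d\, m_0(Y(K_\infty))$, and matching the coefficients of $n p^{(l-1)n}$ yields the desired equality $l_0(M) = 2d\, l_0(Y(K_\infty))$.

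To prove the general asymptotic formula, I first reduce via a pseudo-isomorphism argument (a variant of Lemma~\ref{lemma:ckmm1} and Proposition~\ref{prop:pseudo-iso} adapted to the functor $N \mapsto N_{G_n}/p^n N_{G_n}$) to the case of cyclic elementary modules. For $\Lambda_l/(p^e)$, a direct computation gives $M_{G_n} \cong (\Z_p/p^e)[G/G_n]$, whose $p^n$-quotient has $p$-valuation $\min(n,e) p^{ln}$, consistent with $m_0 = e$ and $l_0 = 0$. For $\Lambda_l/(h)$ with $h$ coprime to $p$, I use the Cuoco--Monsky embedding $\psi_n$ from the proof of Lemma~\ref{lemma:ckmm2}, which yields a pseudo-isomorphism
\[ M_{G_n} \sim \bigoplus_{\underline\zeta \in W_n/\!\sim} \Z_p[\underline\zeta]/(h(\underline\zeta-1)). \]
The summands with $h(\underline\zeta - 1) = 0$ produce a $\Z_p$-free contribution of rank $l_0(h)\, p^{(l-1)n} + \Ok(p^{(l-2)n})$, coming precisely from the special prime factors of $\overline h \in \Lambda_l/p\Lambda_l$; after taking the $p^n$-quotient this contributes $l_0(h)\, n p^{(l-1)n}$ to $Q_n$. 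The remaining summands are finite, with total $p$-valuation $\Ok(p^{(l-1)n})$ by \cite[Theorem~2.5]{monsky}.

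The main obstacle is this last step: carefully combining the $\Z_p$-free part and the $p^\infty$-torsion part of $M_{G_n}$ so that their total contribution to $Q_n$ matches $m_0(M) p^{ln} + l_0(M) n p^{(l-1)n}$ up to the error term $\Ok(p^{(l-1)n})$. The needed bookkeeping parallels the arguments of Lemmas~\ref{lemma:ckmm1}--\ref{lemma:ckmm3}, but is carried out for the coinvariants functor (i.e., the empty Cuoco--Monsky structure $A_n' = I_n M$, for which Lemma~\ref{lemma:ckmm3} does not apply off the shelf in the special-prime case) rather than for a ramification-adapted structure as in the class group proof of Theorem~\ref{thm:ckmm}.
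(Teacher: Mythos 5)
Your proposal takes a genuinely different route from the paper's, and I want to be clear both about where it diverges and where it has a real gap.

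The paper's proof never attempts the general asymptotic formula you are after. Instead it shows, by a purely arithmetic argument, that $\rg_{\Z_p}\bigl((Y_{A,\Sigma_p}^{(K_\infty)})_{G_n}\bigr) = \Ok(p^{(l-2)n})$: one observes that $\ker(\psi_2)$ (the local-triviality condition inside $\mathrm{Hom}(K_{\Sigma_p}/K_\infty, A[p^\infty])$) corresponds via Kummer theory to unramified extensions of $K_\infty$ in which the primes above $p$ split, hence is controlled by $Y(K_\infty)$; the finiteness of $Y(K_n)$ plus the decomposition hypothesis then give the corank bound, and the control theorem transfers it to $(Y_{A,\Sigma_p}^{(K_\infty)})_{G_n}$. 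Only at that point is Lemma~\ref{lemma:ckmm2} applied, and it is exactly this rank bound that is its hypothesis. Your plan sidesteps this arithmetic input entirely by asserting that the formula
\[ v_p\bigl(|M_{G_n}/p^n M_{G_n}|\bigr) = m_0(M)\,p^{ln} + l_0(M)\, n\, p^{(l-1)n} + \Ok(p^{(l-1)n}) \]
holds for \emph{all} finitely generated torsion $\Lambda_l$-modules, with the empty structure, without any rank assumption.

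That claimed general formula is, as far as I can tell by checking examples, actually true — and if proved it would be a clean algebraic strengthening of Corollary~\ref{cor:perbet+} worth recording. But it is \emph{not} a consequence of Lemma~\ref{lemma:ckmm2}, whose proof uses the hypothesis $r(G_n)=\Ok(p^{(l-2)n})$ precisely to control the discrepancy between $M_{G_n}$ and the auxiliary module $X_n$ coming from the embedding $\psi_n$; the underlying result of Cuoco--Monsky quoted in that proof applies only under that hypothesis. Moreover your own sketch of the special-prime case contains a concrete error: for $h = T_l^s$ one has $l_0(\Lambda_l/(h)) = s$, yet the $\Z_p$-free part of $\bigoplus_{\underline\zeta}\Z_p[\underline\zeta]/(h(\underline\zeta-1))$ has rank $p^{(l-1)n}$, not $l_0(h)\,p^{(l-1)n}$ — the rank contribution counts only the distinct special primes dividing $\overline h$, not their multiplicities, and the remaining $(s-1)\,n\,p^{(l-1)n}$ must be extracted from the torsion part $X_n[p^\infty]/p^n X_n[p^\infty]$. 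The total does work out, but the bookkeeping is exactly the thing you flagged as the ``main obstacle'' and have not carried out; it also requires a sharper pseudo-isomorphism comparison for the full quotient $N\mapsto N_{G_n}/p^n N_{G_n}$ than Proposition~\ref{prop:pseudo-iso} gives (whose error, at $k=n$, is $\Ok(n\,p^{(l-1)n})$, not $\Ok(p^{(l-1)n})$ — you would need the rank bound for pseudo-null modules from \cite[Lemma~3.1]{cuoco-monsky} to push the rank-discrepancy term down to $\Ok(n\,p^{(l-2)n})$).

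So the structure of your plan is coherent and the target formula is likely correct, but what you are proposing to prove is a new algebraic theorem, strictly harder than Lemma~\ref{lemma:ckmm2}, and the step where the special primes are handled is both incomplete and currently mis-stated. The paper's arithmetic shortcut — tying the local kernel to the class group to obtain the corank bound — is precisely what lets it avoid this analysis.
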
 
\begin{proof} 
We need the following purely algebraic generalisation of the work of Cuoco and Monsky (see also our corresponding results in Section~\ref{section:5}).

Suppose first that $Y=\Z_p\llbracket G\rrbracket /(F^s)$ for some \emph{special} irreducible polynomial $F$ in the sense of \cite{cuoco-monsky} and $s \in \N$. Then $Y_{G_n}$ is  $\Z_p$-torsionfree and has $\Z_p$-rank $l_0p^{n(l-1)}$ for $n$ large enough, by (the proof of) \cite[proof of Theorem 4.13]{cuoco-monsky}. 
Therefore $v_p(|Y_{G_n}/p^n Y_{G_n}|) = n l_0 p^{n(l-1)}$ for sufficiently large $n \in \N$. 

Let now $X$ be an arbitrary $\Z_p\llbracket G\rrbracket $-torsion module. Let $E_1\oplus E_2$ be the corresponding elementary module, where $E_2$ is annihilated by a product of special primes and $E_1$ corresponds to all other annihilators. For any module $M$ such that $M_{G_n}$ is of $\Z_p$-rank $\mathcal{O}(p^{n(l-2)})$, we have that 
\[\vert v_p(M_{G_n}/p^n M_{G_n})-v_p(M_{G_n}[p^\infty]/p^nM_{G_n}[p^\infty])\vert=\mathcal{O}(np^{n(l-2)}).\]
In particular, for pseudo-null modules, we have that $v_p(\vert M_{G_n}/p^nM_{G_n})=\mathcal{O}(p^{n(l-1)})$ (see \cite[Theorem 3.2]{cuoco-monsky}. 

Now we use the following variant of Lemma~\ref{lemma:ckmm1} (note that ${M_{G_n} = M_{I_n}}$ in the notation from Section~\ref{section:5}): 
\begin{lemma} 
   Fix a $\Z_p^l$-extension $K_\infty/K$ with Galois group $G$, $l \ge 1$. Let $M$ and $M'$ be two finitely generated and torsion $\Z_p\llbracket G\rrbracket $-modules, and suppose that $M$ and $M'$ are pseudo-isomorphic. Then 
   \[ |v_p(|M_{G_n}/p^n M_{G_n}|) - v_p(|M'_{G_n}/p^n M'_{G_n}|)| = \Ok(p^{n(l-1)}). \] 
\end{lemma} 
\begin{proof} 
  Let $M_1$ and $M_2$ be the kernel and cokernel of a pseudo-isomorphism ${M\longrightarrow M'}$. Breaking up the exact sequence
\[0\longrightarrow M_1\longrightarrow M\longrightarrow M' \longrightarrow M_2 \longrightarrow 0\]
into two sequences 
\begin{align*}
    0 \longrightarrow M_1\longrightarrow M\longrightarrow H\longrightarrow 0, \\
    0\longrightarrow H\longrightarrow M'\longrightarrow M_2\longrightarrow 0, 
\end{align*}
and taking $G_n$-coinvariants, we obtain
\begin{align*}
    (M_1)_{G_n}\longrightarrow M_{G_n}\longrightarrow H_{G_n}\longrightarrow 0, \\
    H_{G_n}\longrightarrow (M')_{G_n}\longrightarrow (M_2)_{G_n}\longrightarrow 0. 
\end{align*}
For both sequences we take the $p^n$-quotients. As the sequences are only right exact, we obtain
\begin{align*}
    (M_1)_{G_n}/p^n(M_1)_{G_n}\longrightarrow M_{G_n}/p^nM_{G_n}\longrightarrow H_{G_n}/p^nH_{G_n}\longrightarrow 0, \\
    H_{G_n}/p^nH_{G_n}\longrightarrow (M')_{G_n}/p^n(M')_{G_n}\longrightarrow (M_2)_{G_n}/p^n(M_2)_{G_n}\longrightarrow 0. 
\end{align*}
We get the two inequalities \[v_p(\vert H_{G_n}/p^nH_{G_n}\vert )\le v_p(\vert M_{G_n}/p^nM_{G_n}\vert)\] and \begin{align*}
v_p(\vert (M')_{G_n}/p^n(M')_{G_n}\vert )&\le v_p(\vert H_{G_n}/p^nH_{G_n}\vert)+v_p(\vert (M_2)_{G_n}/p^n(M_2)_{G_n}\vert) \\&=v_p(\vert H_{G_n}/p^nH_{G_n}\vert)+\mathcal{O}(p^{n(l-1)}).\end{align*} So in total we have 
\[v_p(\vert (M')_{G_n}/p^n(M')_{G_n}\vert ) =  v_p(\vert M_{G_n}/p^nM_{G_n}\vert)+\mathcal{O}(p^{n(l-1)}).\]
Interchanging the roles of $M$ and $M'$, we obtain 
\[\vert v_p(\vert M_{G_n}/p^n M_{G_n}\vert)-v_p(|M'_{G_n} / p^n M'_{G_n}|)\vert=\mathcal{O}(p^{n(l-1)}).\]
\end{proof} 
Applying this lemma to $X$ and $E_1 \oplus E_2$, we are left to compute $v_p(\vert N_{G_n}/p^nN_{G_n}\vert)$ for the elementary modules $N=E_1$ and $N=E_2$. For $E_1$, this is done in Lemma \ref{lemma:ckmm2}, and for $E_2$ we get $nl_0(E_2)p^{n(l-1)}$ by the above. So we obtain that 
\[v_p(\vert X_{G_n}/p^nX_{G_n}\vert)=m_0p^{ln}+l_0np^{n(l-1)}+\mathcal{O}(p^{n(l-1)}).\]
   
Applying this to $Y_{A}^{(K_\infty)}$ (defined with respect to ${\Sigma = \Sigma_p}$), we obtain
   \[ v_p(|(Y_{A}^{(K_\infty)})_{G_n} /p^{n} (Y_{A}^{(K_\infty)})_{G_n}|) = (\mu(Y_{A}^{(K_\infty)}) p^n + l_0(Y_{A}^{(K_\infty)}) n  + \Ok(1)) p^{(l-1)n}. \] 
   
   Note that $\mu(Y_A^{(K_\infty)}) = m_0(Y(K_\infty))$ in view of Theorem
   \ref{thm:comparing ranks}. The assertion follows by comparing the above asymptotic formula with that from Theorem~\ref{thm:vergleich_l0}. 
\end{proof}

\bibliography{references} 
 	
\bibliographystyle{alpha}

\end{document}